\newtheorem{thm}{\bf{Theorem}}[section]
\newtheorem{lem}[thm]{\bf{Lemma}}
\newtheorem{df}[thm]{\bf{Definition}}
\newtheorem{cor}[thm]{\bf{Corollary}}
\newtheorem{rem}[thm]{\bf{Remark}}
\newtheorem{prop}[thm]{\bf{Proposition}}
\newtheorem{fact}[thm]{\bf{Fact}}
\newtheorem{ex}[thm]{\bf{Example}}
\numberwithin{equation}{section}
\numberwithin{thm}{section}
\newcommand{\dom}{\operatorname{dom}}
\newcommand{\ri}{\operatorname{ri}}
\newcommand{\gra}{\operatorname{gra}}
\newcommand{\Span}{\operatorname{span}}
\newcommand{\epi}{\operatorname{epi}}
\newcommand{\ran}{\operatorname{ran}}
\newcommand{\Id}{\operatorname{Id}}
\newcommand{\elimsup}{\operatornamewithlimits{elimsup}}
\newcommand{\eliminf}{\operatornamewithlimits{eliminf}}
\newcommand{\elim}{\operatornamewithlimits{elim}}
\newcommand{\argmin}{\operatornamewithlimits{argmin}}
\newcommand{\Prox}{\operatorname{Prox}}
\newcommand{\R}{\operatorname{\mathbb{R}}}
\newcommand{\N}{\operatorname{\mathbb{N}}}
\newcommand{\C}{\operatorname{\mathcal{B}}}
\newcommand{\RR}{\ensuremath{\mathbb R}}
\newcommand{\OR}{\operatorname{\overline{\mathbb{R}}}}
\title{Epiconvergence, the Moreau envelope and generalized linear-quadratic functions}
\author{C. Planiden\thanks{Chayne Planiden, Department of Mathematics, University of British Columbia Okanagan, Kelowna, B.C. V1V 1V7, Canada. Email: chayne.planiden@ubc.ca.}\and X. Wang\thanks{Xianfu Wang, Department of Mathematics, University of British Columbia Okanagan, Kelowna, B.C. V1V 1V7, Canada. Email: shawn.wang@ubc.ca.}}
\date{November 24, 2017}
\begin{document}

\maketitle\author
\begin{center}
{\em Dedicated to Dr. Roger J-B Wets, one of the pioneers of epiconvergence.}
\vspace{12pt}
\end{center}
\setcounter{page}{1}\pagenumbering{arabic}

\begin{abstract}
This work introduces the class of generalized linear-quadratic functions, constructed using maximally monotone symmetric linear relations. Calculus rules and properties of the Moreau envelope for this class of functions are developed. In finite dimensions, on a metric space defined by Moreau envelopes, we consider the epigraphical limit of a sequence of quadratic functions and categorize the results. We explore the question of when a quadratic function is a Moreau envelope of a generalized linear-quadratic function; characterizations involving nonexpansiveness and Lipschitz continuity are established. This work generalizes some results by Hiriart-Urruty and by Rockafellar and Wets.
\end{abstract}

\textbf{AMS Subject Classification:} Primary 47A06, 52A41; Secondary 47H05, 90C31.\\

\textbf{Keywords:} Attouch-Wets metric, complete metric space, epiconvergence, extended seminorm, Fenchel conjugate, firmly nonexpansive, generalized linear-quadratic function, linear relation, Lipschitz continuous, maximally monotone, nonexpansive, Moreau envelope, proximal mapping.

\section{Introduction}\label{sec:intro}

The Moreau envelope is a well-established and extensively researched function that emerged in the 1960s \cite{proximite}. It is of great use in optimization due to its regularizing properties, differentiability and coincidence of the minimizers of the objective function in the convex setting. This work continues the investigation into Moreau envelopes in finite dimensions, from the perspective of the generalized linear-quadratic objective function. The long-range reason for studying Moreau envelopes in general is, as alluded to in \cite{genhess}, that if we had a sufficient level of understanding about their properties, it would likely facilitate the development of minimization methods for Moreau envelopes, and therefore for their associated (nonsmooth) objective functions as well. In this work we focus on generalized linear-quadratic functions, because it is a class of functions that has enough structure to secure solid results that do not require overly restrictive conditions, but allows us to obtain results that are useful for a wide range of functions. We define a metric space whose distance function is constructed using Moreau envelopes, with the intention of exploring the epiconvergence of a sequence of quadratic functions. (See \cite{attouch1991quantitative} for more on epidistance between functions.) The idea of studying epiconvergence of convex functions via the Moreau envelope is due to Attouch \cite{attouch1984} and Attouch-Wets \cite{attouch1991quantitative}. Several classes of functions can arise at the limit; these results are classified and illustrated. Then we approach the relationship between Moreau envelopes and quadratics from the opposite direction, asking under what conditions a given quadratic function is a Moreau envelope of another function, and whether said other function can be determined explicitly.\par The linear relation is also a useful tool in functional analysis, notably documented and developed in \cite{cross1998multivalued}, with more recent expansion in \cite{resaverage,monolinrel,bauschke2010borwein,liangjin}. This paper continues to develop the theory of monotone linear relations, in particular for the class of generalized linear-quadratic functions. Such functions arise, for example, in the determination of the existence of a Hessian for the Moreau envelope \cite{lemarechal1997practical,genhess}. In \cite[Theorem 3.9]{genhess} Rockafellar and Poliquin showed that a function does not have to be finite in order for its Moreau envelope to have a Hessian; it suffices that the second-order epiderivative of the function be a generalized linear-quadratic function. The existence of a Hessian is of interest since it is needed in order to do a second-order expansion of the Moreau envelope function, which leads to a second-order approximation of its objective function. Several properties and characterizations for the class of generalized linear-quadratic functions are provided in this work and we demonstrate that it is useful and convenient to work in the setting of generalized linear-quadratic functions when considering matters of epiconvergence. In this paper, we show
\begin{enumerate}
\item[(i)]that monotone linear relations provide a unified framework for generalized linear-quadratic functions;
\item[(ii)]that the Fenchel conjugate of every generalized linear-quadratic convex function can be written in terms of the set-valued inverse of a monotone linear relation;
\item[(iii)]that a function is convex generalized linear-quadratic if and only if its Moreau envelope is convex quadratic;
\item[(iv)]the relationship between the set-valued inverse of a linear mapping and its Moore-Penrose inverse.
\end{enumerate}
We also establish calculus rules for the set of generalized linear-quadratic functions, and we generalize the result of Rockafellar \cite[p. 136]{convanalrock} and that of Hiriart-Urruty deconvolution \cite[Example 2.7]{hiriart1994deconvolution} from symmetric positive definite matrices to maximally monotone symmetric linear relations.\par The rest of this paper is organized as follows. Section \ref{sec:prelim} contains notation, definitions, and basic properties of Moreau envelopes, epiconvergence and monotone operators. In Section \ref{sec:epi}, we discuss epigraphical limits of linear-quadratic functions in one dimension. Several illustrative examples are presented, with graphs showing the limiting behaviour of the Moreau envelope for sequences of quadratic functions. Section \ref{sec:epi2} contains the principal matter of this work; properties, characteristics and results on epiconvergence of generalized linear-quadratic functions on finite-dimensional space. Topics include symmetry, maximal monotonicity, nonexpansiveness, the subdifferential, sum, difference and infimal convolution rules, the adjoint, the set-valued and Moore--Penrose inverses and the Fenchel conjugate. This section includes characterizations of Moreau envelopes of generalized linear-quadratic functions. In Section \ref{sec:semi}, we give applications of these results and we develop a calculus of the class of generalized linear-quadratic functions. Applications are to the seminorm function, the least squares problem and the limit of a sequence of generalized linear-quadratic functions. Section \ref{sec:conc} makes concluding remarks.

\section{Preliminaries}\label{sec:prelim}

This section collects several definitions and facts from previous works, that we will use later in proving our main results. For proof of the facts, we refer the reader to the corresponding citations.

\subsection{Notation}

All functions in this work are defined on $\R^n,$ Euclidean space equipped with inner product $\langle x,y\rangle=\sum_{i=1}^nx_iy_i$ and induced norm $\|x\|=\sqrt{\langle x,x\rangle}.$ The extended real line $\R\cup\{\infty\}$ is denoted $\overline{\R}.$ We use $\Gamma_0(\R^n)$ to represent the set of proper, convex, lower semicontinuous (lsc) functions on $\R^n.$ The identity operator is denoted $\Id.$ Pointwise convergence is denoted $\overset{p}\rightarrow,$ graphical convergence $\overset{g}\rightarrow$ and epiconvergence $\overset{e}\rightarrow.$ The function $\frac{1}{2}\|\cdot\|^2$ is denoted $q.$ We use $N_C(x)$ to represent the normal cone to $C$ at $x,$ as defined in \cite{rockwets}. The relative interior of a set $A$ is denoted $\ri A.$ The domain and the range of an  operator $A$ are denoted $\dom A$ and $\ran A,$ respectively. On $\OR,$ where necessary we use inf-addition and accept the convention $\infty-\infty=\infty$ (see \cite[p. 15]{rockwets}). We use $S_n,$ $S_n^+$ and $S_n^{++}$ to denote the sets of symmetric, positive semidefinite and positive definite matrices, respectively. The \emph{graph} of an operator $A:\R^n\rightrightarrows\R^n$ is defined
$$\gra A=\{(x,x^*):x^*\in Ax\}.$$Its set-valued inverse $A^{-1}:\R^n\rightrightarrows\R^n$ is defined by the graph
$$\gra A^{-1}=\{(x^*,x):x^*\in Ax\}.$$
For any function $f:\R^n\rightarrow\OR,$ the function $f^*:\R^n\rightarrow\OR$ defined by
$$f^*(x^*)=\sup\limits_{x\in\R^n}[\langle x^*,x\rangle-f(x)]$$
is called the \emph{Fenchel conjugate} of $f.$ The \emph{Fenchel subdifferential} of $f$ at $x\in\dom f$ is the set
$$\partial f(x)=\{x^*:f(y)\geq f(x)+\langle x^*,y-x\rangle~\forall y\in\R^n\}.$$ For any $x\not\in\dom f,$ $\partial f(x)=\varnothing.$

\subsection{Moreau envelopes, proximal mappings and their properties}

We work with Moreau envelopes of functions throughout this paper.

\begin{df}\label{df:Moreau}
For a proper, lsc function $f:\R^n\rightarrow\OR,$ the \emph{Moreau envelope} of $f$ is denoted $e_rf$
and defined
$$e_rf(x)=\inf\limits_{y\in\R^n}\left\{f(y)+\frac{r}{2}\|y-x\|^2\right\}.$$
The vector $x$ is called the \emph{prox-centre} and the scalar $r\geq0$ is called the \emph{prox-parameter}. The associated \emph{proximal mapping} is the set of all points at which the above infimum is attained, denoted $\Prox_f^r:$
$$\Prox_f^r(x)=\argmin\limits_{y\in\R^n}\left\{f(y)+\frac{r}{2}\|y-x\|^2\right\}.$$
\end{df}
\begin{lem}\label{fenchmorlem}
For any proper function $f:\R^n\rightarrow\OR,$
\begin{equation}\label{e1}
e_rf(x)=\frac{r}{2}\|x\|^2-g^*(rx),
\end{equation}
where $g(x)=f(x)+\frac{r}{2}\|x\|^2.$
\end{lem}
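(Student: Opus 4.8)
The statement to prove is the identity $e_rf(x)=\frac{r}{2}\|x\|^2-g^*(rx)$ with $g=f+\frac{r}{2}\|\cdot\|^2$. This is a direct computation from the definitions, so the plan is simply to expand $e_rf(x)$ and algebraically massage the expression into the desired form.

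First I would write out $e_rf(x)=\inf_{y}\left\{f(y)+\frac{r}{2}\|y-x\|^2\right\}$ and expand the square: $\frac{r}{2}\|y-x\|^2=\frac{r}{2}\|y\|^2-r\langle x,y\rangle+\frac{r}{2}\|x\|^2$. Pulling the constant-in-$y$ term $\frac{r}{2}\|x\|^2$ out of the infimum gives
\[
e_rf(x)=\frac{r}{2}\|x\|^2+\inf_{y}\left\{f(y)+\frac{r}{2}\|y\|^2-\langle rx,y\rangle\right\}.
\]
The remaining infimum is $-\sup_{y}\left\{\langle rx,y\rangle-\left(f(y)+\frac{r}{2}\|y\|^2\right)\right\}$, which is precisely $-g^*(rx)$ by the definition of the Fenchel conjugate applied to $g=f+\frac{r}{2}\|\cdot\|^2$. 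Substituting yields the claimed formula.

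There is essentially no obstacle here; the only point requiring a word of care is that the manipulation is valid for a merely proper $f$ (no convexity or lower semicontinuity assumed), since all we use is that $\inf(-h)=-\sup h$ and that additive constants factor through the infimum — no attainment of the infimum or properness of $g^*$ is needed for the identity to hold in $\OR$, with the convention $\infty-\infty=\infty$ handling the degenerate case. I would therefore present this as a two- or three-line computation.
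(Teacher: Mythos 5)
Your computation is correct and is essentially identical to the paper's proof: both expand $\frac{r}{2}\|y-x\|^2$, extract the $\frac{r}{2}\|x\|^2$ term, and recognize the remaining infimum as $-g^*(rx)$ via $\inf(-h)=-\sup h$. Your added remark about why properness of $f$ suffices is a reasonable (and harmless) extra word of care that the paper omits.
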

\begin{proof}
We have
\begin{align*}
e_rf(x)&=\inf\limits_y\left\{f(y)+\frac{r}{2}\|y-x\|^2\right\}\\
&=-\sup\limits_y\left\{-f(y)-\frac{r}{2}(\|y\|^2-2\langle x,y\rangle+\|x\|^2)\right\}\\
&=\frac{r}{2}\|x\|^2-\sup\limits_y\left\{\langle rx,y\rangle-\left(f(y)+\frac{r}{2}\|y\|^2\right)\right\}\\
&=\frac{r}{2}\|x\|^2-g^*(rx).
\end{align*}
\end{proof}
\begin{fact}\emph{\cite[Example 23.3]{convmono}}\label{resolventfact} In the case of a convex function $f,$ an alternate representation of the proximal mapping makes use of the \emph{resolvent} of the subdifferential of $f,$ which also provides a conversion to the proximal mapping with prox-parameter 1:
$$\Prox_f^r=\left(\Id+\frac{1}{r}\partial f\right)^{-1}=\Prox_{\frac{1}{r}f}^1.$$
\end{fact}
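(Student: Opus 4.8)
The plan is to combine Fermat's rule with the Moreau--Rockafellar sum rule for subdifferentials, and then to exploit the positivity of $r$ by rescaling the objective. Throughout, fix $x\in\R^n$ and assume $r>0$ (for $r=0$ the resolvent form is not meaningful).

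First I would characterize membership in $\Prox_f^r(x)$. Since $f$ is convex and $y\mapsto\frac r2\|y-x\|^2$ is convex and finite, hence continuous, on $\R^n$, the function $h_x(y):=f(y)+\frac r2\|y-x\|^2$ is proper, lsc and convex, and $y\in\argmin_{y\in\R^n}h_x(y)$ if and only if $0\in\partial h_x(y)$. The qualification condition for the sum rule holds trivially because the quadratic term has full domain, so $\partial h_x(y)=\partial f(y)+r(y-x)$. Hence $y\in\Prox_f^r(x)$ if and only if $0\in\partial f(y)+r(y-x)$, equivalently $x-y\in\frac1r\partial f(y)$, equivalently $x\in y+\frac1r\partial f(y)=\bigl(\Id+\tfrac1r\partial f\bigr)(y)$. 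Passing to set-valued inverses yields $y\in\bigl(\Id+\tfrac1r\partial f\bigr)^{-1}(x)$, which gives the first claimed identity.

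For the second identity I would simply note that dividing the objective by the positive constant $r$ leaves its set of minimizers unchanged:
\begin{equation*}
\Prox_f^r(x)=\argmin_{y\in\R^n}\left\{f(y)+\frac r2\|y-x\|^2\right\}=\argmin_{y\in\R^n}\left\{\frac1r f(y)+\frac12\|y-x\|^2\right\}=\Prox_{\frac1r f}^1(x).
\end{equation*}
Since $\frac1r f$ is again proper, lsc and convex, the right-hand side is well defined, and since the equality holds for every $x$, it holds as an identity of set-valued mappings.

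The only real subtlety is the use of the sum rule $\partial(f+g)=\partial f+\partial g$, which in general requires a constraint qualification; here it is automatic because $g=\frac r2\|\cdot-x\|^2$ is real-valued and continuous on all of $\R^n$, so $\dom f\cap\intt\dom g=\dom f\neq\varnothing$ and the Moreau--Rockafellar theorem applies without additional hypotheses. If one prefers to sidestep the sum rule entirely, the same conclusion follows directly from the definition of the subdifferential: $y$ minimizes $h_x$ if and only if $f(z)\geq f(y)+\langle r(x-y),z-y\rangle$ for all $z\in\R^n$, which is precisely $r(x-y)\in\partial f(y)$. Everything else is an elementary manipulation, so I do not expect a genuine obstacle.
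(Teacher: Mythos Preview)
Your argument is correct. The paper does not actually prove this statement: it is recorded as a Fact with a citation to \cite[Example 23.3]{convmono} and no proof is given in the paper itself. Your derivation via Fermat's rule plus the Moreau--Rockafellar sum rule (with the trivial qualification condition coming from the full-domain quadratic), followed by the rescaling observation for the second identity, is exactly the standard route and matches what one finds in the cited reference. There is nothing to compare against in the paper, and nothing missing in your write-up.
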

An alternate expression for the Moreau envelope is reached through infimal convolution:\begin{equation}\label{infconvmor}e_rf=f\oblong\frac{r}{2}\|\cdot\|^2=f\oblong(rq).\end{equation}
\begin{fact}\emph{\cite[Theorem 16.2]{convmono}}\label{fermat} For any proper, lsc function $f:\R^n\rightarrow\OR$ and any $r>0,$ we have
\begin{align*}
p\in\Prox_f^r(x)&\Rightarrow0\in\partial f(p)+r(p-x)\\
&\Leftrightarrow0\in\frac{1}{r}\partial f(p)+p-x.
\end{align*}If in addition $f$ is convex, then the first implication above becomes a two-way implication:$$p\in\Prox_f^r(x)\Leftrightarrow0\in\partial f(p)+r(p-x).$$
\end{fact}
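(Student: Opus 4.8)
The plan is to observe that $p\in\Prox_f^r(x)$ says exactly that $p$ globally minimizes $g:=f+\tfrac r2\|\cdot-x\|^2$, and to read off the subdifferential inclusion from Fermat's rule together with the sum rule for a sum with a smooth summand; the middle equivalence is then pure scaling, and the converse under convexity is the statement that, for a convex $g$, stationarity is global optimality.

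For the first implication I would not even need the sum rule. Subtracting the optimality inequality $g(p)\le g(y)$ and using the expansion $\tfrac r2\|y-x\|^2-\tfrac r2\|p-x\|^2=r\langle p-x,y-p\rangle+\tfrac r2\|y-p\|^2$ gives, for every $y\in\R^n$,
\[
f(y)\ge f(p)+\langle -r(p-x),\,y-p\rangle-\tfrac r2\|y-p\|^2,
\]
which exhibits $-r(p-x)$ as a proximal subgradient of $f$ at $p$, hence $0\in\partial f(p)+r(p-x)$. (Equivalently: apply Fermat's rule $0\in\partial g(p)$ together with $\partial g(p)=\partial f(p)+\nabla(\tfrac r2\|\cdot-x\|^2)(p)=\partial f(p)+r(p-x)$, the sum rule being exact because the quadratic summand is differentiable.) For the middle equivalence, since $r>0$ one has $0\in S\iff0\in\tfrac1r S$ and scalar multiplication distributes over Minkowski sums; applied to $S=\partial f(p)+r(p-x)$ with $\tfrac1r\cdot r(p-x)=p-x$, this gives $0\in\partial f(p)+r(p-x)\iff0\in\tfrac1r\partial f(p)+p-x$.

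For the converse when $f$ is convex, $g$ is convex, so a stationary point is a global minimizer; explicitly, $-r(p-x)\in\partial f(p)$ gives $f(y)\ge f(p)-r\langle p-x,y-p\rangle$, and adding $\tfrac r2\|y-x\|^2$ and using the same expansion yields $g(y)\ge g(p)+\tfrac r2\|y-p\|^2\ge g(p)$, so $p\in\Prox_f^r(x)$ — indeed $\Prox_f^r(x)=\{p\}$. One could instead invoke Fact~\ref{resolventfact}: for $f\in\Gamma_0(\R^n)$, $p\in\Prox_f^r(x)\iff x\in(\Id+\tfrac1r\partial f)(p)\iff x-p\in\tfrac1r\partial f(p)\iff0\in\partial f(p)+r(p-x)$.

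The point needing care is the first implication for nonconvex $f$: the verbatim sum rule and Fermat's rule are available for the regular subdifferential, so there the inclusion is most naturally read through the proximal-subgradient inequality displayed above; the converse, by contrast, genuinely uses convexity to promote the stationarity condition to global optimality. Once $f$ is convex all of these subdifferential notions coincide and the implication becomes the stated equivalence.
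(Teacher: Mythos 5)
This statement is a Fact imported verbatim from \cite[Theorem 16.2]{convmono}; the paper gives no proof, only the citation, so there is nothing in the text to compare your argument against line by line. Taken on its own terms, your proof is correct and self-contained: the expansion $\tfrac r2\|y-x\|^2-\tfrac r2\|p-x\|^2=r\langle p-x,y-p\rangle+\tfrac r2\|y-p\|^2$ does produce the displayed inequality, the middle equivalence is indeed just multiplication by $1/r>0$, and your convex converse (completing the square to get $g(y)\ge g(p)+\tfrac r2\|y-p\|^2$, or alternatively routing through Fact~\ref{resolventfact}) is exactly the mechanism the paper uses elsewhere, e.g.\ in Proposition \ref{quadprop}(iii). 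The one point that deserves to be said more loudly than you say it: the paper defines $\partial f$ only as the Fenchel subdifferential, and under that reading the first implication is actually \emph{false} for general proper lsc $f$ --- your inequality exhibits $-r(p-x)$ only as a proximal subgradient, which is strictly weaker than a Fenchel subgradient in the nonconvex case. Concretely, for $f=-\tfrac12\|\cdot\|^2$ and $r>1$ the proximal mapping is single-valued and nonempty, yet $\partial f(p)=\varnothing$ everywhere in the Fenchel sense, so $0\in\partial f(p)+r(p-x)$ fails. You correctly observe that the nonconvex half must be read with the regular or proximal subdifferential of Rockafellar--Wets; with that reading your argument is complete, and in the convex case --- the only case the paper actually invokes --- all the subdifferential notions coincide and your proof is airtight. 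So the proposal is sound, but if it were to be spliced into this paper it would need one sentence clarifying which subdifferential the nonconvex implication refers to, since the paper's own definition of $\partial$ does not support it.
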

\begin{prop}[Calculus of Moreau envelopes]\label{calcenv}
For any function $f:\R^n\rightarrow\OR,$ $r>0,$ $v\in\R^n,$ $c\in\R,$ the following hold:
\begin{itemize}
\item[(i)]$e_r(f+c)=e_rf+c;$
\item[(ii)]$e_rf=re_1(f/r);$
\item[(iii)]$e_r(f(\cdot-c))=(e_rf)(\cdot-c);$
\item[(iv)]$e_1f=q-(f+q)^*;$
\item[(v)]$e_1(f+\langle\cdot,v\rangle)=e_1f(\cdot-v)+\langle\cdot,v\rangle-q(v);$
\item[(vi)]$(e_rf)^*=f^*+q/r.$
\end{itemize}
\end{prop}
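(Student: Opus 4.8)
The plan is to establish each item essentially independently, relying on Lemma \ref{fenchmorlem}, the infimal-convolution representation \eqref{infconvmor}, and elementary manipulations of the defining infimum. For (i), I would simply add the constant $c$ outside the infimum in Definition \ref{df:Moreau}: $e_r(f+c)(x)=\inf_y\{f(y)+c+\tfrac r2\|y-x\|^2\}=c+e_rf(x)$. For (ii), the substitution is to pull the factor $r$ out of the braces: $e_rf(x)=\inf_y\{f(y)+\tfrac r2\|y-x\|^2\}=r\inf_y\{f(y)/r+\tfrac12\|y-x\|^2\}=re_1(f/r)(x)$. For (iii), a change of variables $y\mapsto y+c$ (equivalently $w=y-c$) in the infimum defining $e_r(f(\cdot-c))(x)$ turns it into $\inf_w\{f(w)+\tfrac r2\|w-(x-c)\|^2\}=(e_rf)(x-c)$. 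None of these three presents any obstacle.

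For (iv), I would invoke Lemma \ref{fenchmorlem} with $r=1$: with $g(x)=f(x)+\tfrac12\|x\|^2=f(x)+q(x)$, equation \eqref{e1} reads $e_1f(x)=\tfrac12\|x\|^2-g^*(x)=q(x)-(f+q)^*(x)$, which is exactly (iv). Item (v) follows by combining (iii)-type reasoning with a completion of the square: writing $e_1(f+\langle\cdot,v\rangle)(x)=\inf_y\{f(y)+\langle y,v\rangle+\tfrac12\|y-x\|^2\}$, I would complete the square in $y$ by noting $\langle y,v\rangle+\tfrac12\|y-x\|^2=\tfrac12\|y-(x-v)\|^2+\langle x,v\rangle-\tfrac12\|v\|^2$, so that the infimum becomes $e_1f(x-v)+\langle x,v\rangle-q(v)$, which is the claimed identity. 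The only care needed here is the algebra of the quadratic expansion, and the use of inf-addition conventions so the manipulation is valid even where $f$ takes the value $\infty$.

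For (vi), the cleanest route is via the infimal-convolution formula \eqref{infconvmor}: $e_rf=f\oblong(rq)$, and the Fenchel conjugate of an infimal convolution is the sum of the conjugates, so $(e_rf)^*=f^*+(rq)^*$. It then remains to compute $(rq)^*$; since $q=\tfrac12\|\cdot\|^2$ is self-conjugate, a short scaling computation gives $(rq)^*(x^*)=\sup_x\{\langle x^*,x\rangle-\tfrac r2\|x\|^2\}=\tfrac1{2r}\|x^*\|^2=q(x^*)/r$, yielding $(e_rf)^*=f^*+q/r$. The main subtlety — and the one place I would be most careful — is the conjugate-of-infimal-convolution identity, which in full generality requires either convexity/properness hypotheses or, at minimum, an argument that the supremum defining the conjugate can be exchanged with the infimum; for a general proper $f$ one should either cite the relevant fact or verify directly that $\sup_x[\langle x^*,x\rangle-\inf_y\{f(y)+rq(x-y)\}]=\sup_{x,y}[\langle x^*,x\rangle-f(y)-rq(x-y)]$ and then separate the $x$ and $y$ suprema. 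I expect that exchange to be the only genuine obstacle in the whole proposition; everything else is direct substitution and completion of squares.
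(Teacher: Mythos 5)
Your proposal is correct, and for items (i), (iii), (iv) and (vi) it follows essentially the same path as the paper: (i) and (iii) by direct manipulation of the infimum, (iv) by specializing Lemma \ref{fenchmorlem} to $r=1$, and (vi) via the representation $e_rf=f\oblong(rq)$ together with the conjugate-of-infimal-convolution identity and $(rq)^*=q/r$ (the paper simply cites these two facts from Bauschke--Combettes, whereas you flag and sketch the sup/inf exchange yourself --- a reasonable extra precaution, though the cited fact covers it for proper $f$). The genuine differences are in (ii) and (v). For (ii) the paper cites a reference, while you give the one-line direct computation of pulling the factor $r$ out of the infimand; this is more self-contained and clearly valid for $r>0$ even where $f$ takes the value $\infty$. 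For (v) the paper stays entirely in the conjugate picture: it applies (iv) to $f+\langle\cdot,v\rangle$, invokes a translation formula for conjugates to rewrite $(f+\langle\cdot,v\rangle+q)^*$ as $[f(\cdot-v)+q(\cdot-v)]^*$, and then shows the right-hand side of (v) reduces to the same expression. You instead complete the square directly inside the defining infimum, using the identity $\langle y,v\rangle+\tfrac12\|y-x\|^2=\tfrac12\|y-(x-v)\|^2+\langle x,v\rangle-\tfrac12\|v\|^2$, which I have checked and which immediately yields the claim. Your route for (v) is more elementary and avoids any appeal to conjugate calculus (hence needs no properness-type hypotheses beyond the inf-addition convention you mention); the paper's route has the advantage of reusing (iv) and keeping the whole proposition within the Fenchel-conjugate framework that the rest of the paper relies on. Either argument is acceptable.
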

\begin{proof}
(i) This is seen directly as a property of the infimum: for any function $g$ and any $c\in\R,$ $\inf\{g(x)+c\}=\inf\{g(x)\}+c.$\\

\noindent(ii) See \cite[Proposition 12.22]{convmono}.\\

\noindent(iii) Let $z=y-c.$ Then\begin{align*}
e_r(f(\cdot-c))(x)&=\inf\limits_{y\in\R^n}\left\{f(y-c)+\frac{r}{2}\|y-x\|^2\right\}\\
&=\inf\limits_{z\in\R^n}\left\{f(z)+\frac{r}{2}\|z-(x-c)\|^2\right\}\\
&=(e_rf)(x-c).
\end{align*}
(iv) This is Lemma \ref{fenchmorlem} with $r=1.$\\

\noindent(v) Consider the left-hand side of statement (v) first. Applying statement (iv) to $f+\langle\cdot,v\rangle,$ we have
$$e_1(f+\langle\cdot,v\rangle)=q-(f+\langle\cdot,v\rangle+q)^*.$$ Applying \cite[Proposition 13.20(iii)]{convmono} to the function $f+q$ with $y=0$ and $\alpha=0,$ we have
\begin{equation}\label{eq:sameasotherone}
e_1(f+\langle\cdot,v\rangle)=q-[f(\cdot-v)+q(\cdot-v)]^*.
\end{equation}Now consider the right-hand side of statement (v). Applying statement (iv) to $f(\cdot-v),$ we have
\begin{align*}
e_1(f(\cdot-v))&=q(\cdot-v)-[f(\cdot-v)+q(\cdot-v)]^*,\\
&=q-[f(\cdot-v)+q(\cdot-v)]^*-\langle\cdot,v\rangle+q(v),\\
e_1(f(\cdot-v))+\langle\cdot,v\rangle-q(v)&=q-[f(\cdot-v)+q(\cdot-v)]^*,
\end{align*}which is the same as \eqref{eq:sameasotherone}.\\

\noindent(vi) By \cite[Proposition 13.21(iii)]{convmono} with $g=rq,$ we have $(f\oblong rq)^*=f^*+(rq)^*.$ By \eqref{infconvmor} we have $(e_rf)^*=(f\oblong rq)^*$ and by \cite[Example 13.4]{convmono} we have $(rq)^*=q/r.$
\end{proof}
\begin{prop}\label{nablaprop}
Let $f\in\Gamma_0(\R^n).$ Then $f$ is prox-bounded with threshold 0, $\Prox_f^r$ is single-valued and continuous, and $e_rf$ is convex and continuously differentiable. Moreover, the following properties hold.
\begin{align*}
\mbox{(i)}&&~ e_rf(x)+e_{\frac{1}{r}}f^*(rx)&=\frac{r}{2}\|x\|^2;\\
\mbox{(ii)}&&~ \nabla e_rf(x)&=r[x-\Prox_f^r(x)];\\
\mbox{(iii)}&&~ \nabla e_rf^*(x)&=\Prox_f^{\frac{1}{r}}(rx);&\\
\mbox{(iv)}&&~ \Prox_f^r(x)&=\nabla g(x)\mbox{ where }g(x)=\frac{1}{r}\left[e_{\frac{1}{r}}f^*(rx)\right];\\
\mbox{(v)}&&~ \Prox_{f^*}^r(x)&=x-\frac{1}{r}\Prox_f^{\frac{1}{r}}(rx).&
\end{align*}
\end{prop}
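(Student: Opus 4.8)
The plan is to derive the entire proposition from Lemma~\ref{fenchmorlem} and the calculus in Proposition~\ref{calcenv}, so that essentially one computation does all the work. The preliminary assertions --- that $f$ is prox-bounded with threshold $0$, that $\Prox_f^r$ is single-valued and continuous, and that $e_rf$ is convex and continuously differentiable --- are standard for $f\in\Gamma_0(\R^n)$, and I would simply cite the corresponding statements in \cite{convmono}. I would also record at the outset that the same conclusions hold for $f^*$, since $f^*\in\Gamma_0(\R^n)$; this is what makes all of the gradient manipulations below legitimate.

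For (i), take $g=f+rq$ as in Lemma~\ref{fenchmorlem}, so that $e_rf(x)=\tfrac{r}{2}\|x\|^2-g^*(rx)$. The key point is the identification $g^*=(f+rq)^*=f^*\oblong(rq)^*=f^*\oblong(q/r)=e_{1/r}f^*$, which uses the conjugate-of-a-sum rule (valid since $rq$ is finite and continuous), the identity $(rq)^*=q/r$ (cf. the proof of Proposition~\ref{calcenv}(vi)), and the infimal-convolution form \eqref{infconvmor} of the Moreau envelope. Substituting yields $e_rf(x)=\tfrac{r}{2}\|x\|^2-e_{1/r}f^*(rx)$, which is exactly (i).

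For (ii), I would differentiate $e_rf(x)=\tfrac{r}{2}\|x\|^2-g^*(rx)$ to get $\nabla e_rf(x)=rx-r\nabla g^*(rx)$, and then show $\nabla g^*(rx)=\Prox_f^r(x)$. Since $g=f+rq$ is strongly convex, $g^*$ is differentiable and $\nabla g^*(y)$ is the unique maximizer in $\sup_z\{\langle y,z\rangle-f(z)-\tfrac{r}{2}\|z\|^2\}$; its optimality condition $0\in\partial f(z)+r(z-y/r)$ identifies it with $\Prox_f^r(y/r)$ via the convex case of Fact~\ref{fermat}. Taking $y=rx$ gives (ii). Part (iii) then comes from differentiating (i): with $h(x):=e_{1/r}f^*(rx)=\tfrac{r}{2}\|x\|^2-e_rf(x)$ we get $r\nabla e_{1/r}f^*(rx)=\nabla h(x)=rx-\nabla e_rf(x)=r\Prox_f^r(x)$ by (ii), hence $\nabla e_{1/r}f^*(rx)=\Prox_f^r(x)$; replacing $r$ by $1/r$ and then $x$ by $rx$ turns this into $\nabla e_rf^*(x)=\Prox_f^{1/r}(rx)$.

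Finally, (iv) is immediate from (i) and (ii): with $g$ as in the statement, $g(x)=\tfrac{1}{r}e_{1/r}f^*(rx)=\tfrac12\|x\|^2-\tfrac1r e_rf(x)$, so $\nabla g(x)=x-\tfrac1r\nabla e_rf(x)=\Prox_f^r(x)$. For (v), apply (ii) with $f^*$ in place of $f$ to obtain $\Prox_{f^*}^r(x)=x-\tfrac1r\nabla e_rf^*(x)$, and then substitute (iii). None of these steps is genuinely difficult; the only real care is needed in the bookkeeping of the $r\leftrightarrow 1/r$ and $x\leftrightarrow rx$ substitutions used in (iii) and (v), and in confirming at the start that both $e_rf$ and $e_rf^*$ are $C^1$, so that every gradient written above actually exists.
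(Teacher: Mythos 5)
Your proposal is correct, and every step checks out: the identification $(f+rq)^*=f^*\oblong(q/r)=e_{1/r}f^*$ gives (i), the strong convexity of $g=f+rq$ legitimizes $\nabla g^*(rx)=\Prox_f^r(x)$ for (ii), and the $r\leftrightarrow 1/r$, $x\leftrightarrow rx$ bookkeeping in (iii) and (v) is handled correctly. The difference from the paper is one of self-containedness rather than of mathematical substance. The paper simply cites \cite[Theorem 2.26, Example 11.26, Exercise 11.27]{rockwets} for the preliminary assertions and for (i), (ii) and (iv), and only supplies arguments for (iii) and (v); its derivation of (iii) from (i) and (ii) via differentiation and the chain rule is essentially identical to yours, while for (v) it applies (iv) to $f^*$ where you apply (ii) to $f^*$ and then invoke (iii) --- both are one-line consequences of the same identities. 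What your route buys is a genuinely self-contained proof of the whole proposition from Lemma~\ref{fenchmorlem}, Fact~\ref{fermat} and standard conjugacy calculus, at the cost of having to verify the qualification condition for the conjugate-of-a-sum rule and the differentiability of $g^*$ (both of which you correctly flag and which do hold, since $rq$ is finite and coercive and $g$ is $r$-strongly convex). What the paper's route buys is brevity and a clean reduction to well-known textbook facts. Either is acceptable; yours would be the better choice if the surrounding text did not already lean on \cite{rockwets} for these statements.
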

\begin{proof}
The proof that $f$ has threshold $0,$ $\Prox_f^r$ is single-valued and continuous, and $e_rf$ is convex and continuously differentiable is found in \cite[Theorem 2.26]{rockwets}.\\

\noindent(i) See \cite[Example 11.26]{rockwets}.\\

\noindent(ii) See \cite[Theorem 2.26]{rockwets}.\\

\noindent(iii) Replacing $f$ with $f^*$ in part (i) and using the fact that $f^{**}=f,$ we have
$$e_rf^*(x)+e_{\frac{1}{r}}f(rx)=\frac{r}{2}\|x\|^2.$$
Differentiating both sides and rearranging yields
$$\nabla e_rf^*(x)=rx-\nabla e_{\frac{1}{r}}f(rx).$$
We substitute $z=rx,$ then use part (ii) and the chain rule to get
\begin{align*}
\nabla e_rf^*(x)&=rx-\nabla_xe_{\frac{1}{r}}f(z)\\
&=rx-\nabla_ze_{\frac{1}{r}}f(z)\nabla_xz\\
&=rx-\frac{1}{r}[z-\Prox_f^{\frac{1}{r}}(z)]r\\
&=\Prox_f^{\frac{1}{r}}(rx).
\end{align*}
(iv) See \cite[Exercise 11.27]{rockwets}.\\

\noindent(v) Replacing $f$ with $f^*$ in part (iv), we have
$$\Prox_{f^*}^r(x)=\nabla g(x)\mbox{ where }g(x)=\frac{1}{r}\left[e_{\frac{1}{r}}f(rx)\right].$$
Substituting $z=rx,$ then applying part (ii) and the chain rule yields
\begin{align*}
\nabla g(x)&=\frac{1}{r}\nabla_z\left(e_{\frac{1}{r}}f(z)\right)\nabla_xz\\
&=\frac{1}{r}\left[\frac{1}{r}\left(z-\Prox_f^{\frac{1}{r}}(z)\right)\right]r\\
&=\frac{1}{r}\left(rx-\Prox_f^{\frac{1}{r}}(rx)\right)\\
&=x-\frac{1}{r}\Prox_f^{\frac{1}{r}}(rx).
\end{align*}
\end{proof}

\subsection{Epiconvergence and the Attouch-Wets metric}

Epiconvergence plays a fundamental role in optimization and variational analysis, see \cite{attouch1984, vanderwerff, beer93, burkeho, rockwets}.
\begin{df}\label{df:epi}
For any sequence $\{f_k\}$ of functions on $\R^n,$ the \emph{lower epilimit $\eliminf_kf_k$} is the function having as its epigraph the outer limit of the sequence of sets $\epi f_k:$
$$\epi(\eliminf\limits_kf_k)=\limsup\limits_k(\epi f_k).$$ The \emph{upper epilimit $\elimsup_kf_k$} is the function having as its epigraph the inner limit of the sets $\epi f_k:$
$$\epi(\elimsup\limits_kf_k)=\liminf\limits_k(\epi f_k).$$ When these two functions coincide, the \emph{epilimit $\elim_kf_k$} is said to exist: $$\elim_kf_k=\eliminf_kf_k=\elimsup_kf_k.$$ In this event, the functions are said to \emph{epiconverge} to $f,$ symbolized by $f_k\overset{e}\rightarrow f.$ Thus,
$$f_k\overset{e}\rightarrow f\Leftrightarrow\epi f_k\overset{g}\rightarrow\epi f.$$
\end{df}
\begin{df}
Let $f:\R^n\rightarrow\OR$ be proper and lsc. If there exists $r\geq0$ such that $e_rf(x)>-\infty$ for some $x,$ then $f$ is said to be \emph{prox-bounded}. The infimum of all such $r$ is called the \emph{threshold of prox-boundedness} of $f.$
\end{df}
\begin{df}
A sequence of functions $\{f_k\}$ on $\R^n$ is \emph{eventually prox-bounded} if there exists $r\geq0$ such that $\liminf_{k\rightarrow\infty}e_rf_k(x)>-\infty$ for some $x.$ The infimum of all such $r$ is the \emph{threshold of eventual prox-boundedness} of the sequence.
\end{df}
 There is an important relationship among epiconvergence of proper lsc functions, pointwise convergence and uniform convergence of their Moreau envelopes, as the following fact outlines.
\begin{fact}\label{fact:convergences}\emph{\cite[Theorem 7.37]{rockwets}}
For proper, lsc functions $f_k$ and $f,$ the following are equivalent:
\begin{itemize}
\item[(i)] the sequence $\{f_k\}$ is eventually prox-bounded and $f_k\overset{e}\rightarrow f;$
\item[(ii)] $f$ is prox-bounded and $e_rf_k\overset{p}\rightarrow e_rf$ for all $r\in(\varepsilon,\infty),$ $\varepsilon>0.$
\end{itemize} Then the pointwise convergence of $e_rf_k$ to $e_rf$ for $r>0$ sufficiently large is uniform on all bounded subsets of $\R^n,$ hence yields continuous convergence and epiconvergence as well, and indeed $e_{r_k}f_k$ converges in all these ways to $e_rf$ whenever $r_k\rightarrow r\in(\bar{r},\infty),$ where $\bar{r}$ is the threshold of eventual prox-boundedness. If $f_k$ and $f$ are convex, then $\bar{r}=0$ and condition (ii) can be replaced by
\begin{itemize}
\item[(ii)] $e_rf_k\overset{p}\rightarrow e_rf$ for some $r>0.$
\end{itemize}
\end{fact}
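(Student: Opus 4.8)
This is \cite[Theorem 7.37]{rockwets}; the plan is to reduce both implications to the two defining conditions of epiconvergence --- the \emph{liminf inequality}, $\liminf_k f_k(x_k)\ge f(x)$ whenever $x_k\to x$, and the existence of a \emph{recovery sequence}, some $x_k\to x$ with $\limsup_k f_k(x_k)\le f(x)$. I will use freely that for a prox-parameter $r$ strictly above the relevant threshold the infimum defining each $e_rf_k$ is attained ($f_k+rq$ being proper, lsc and level-coercive), and I write $p_k^r$ for a point of $\Prox_{f_k}^r(x)$; one could equally work with $\varepsilon_k$-minimizers, $\varepsilon_k\downarrow0$. I also record that eventual prox-boundedness with threshold $\bar r$ supplies, for each $r>\bar r$, a lower bound on $e_rf_k(\cdot)$ that is uniform in $k$ on bounded sets, and that for lsc prox-bounded $f$ one has $e_rf(x)\uparrow f(x)$ as $r\to\infty$.

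\smallskip\noindent\textbf{(i)$\Rightarrow$(ii).} First I would prove $\limsup_k e_rf_k(x)\le e_rf(x)$: choose $y$ nearly optimal for $e_rf(x)$, take a recovery sequence $y_k\to y$ for $f_k\overset{e}\to f$, and observe $e_rf_k(x)\le f_k(y_k)+rq(y_k-x)$, so the $\limsup$ is at most $f(y)+rq(y-x)$. This upper bound, together with the uniform lower bound on $e_{\bar r}f_k$, forces $\{p_k^r\}$ to be bounded, via the elementary comparison $\tfrac{r-\bar r}{2}\|p_k^r-x\|^2\le e_rf_k(x)-e_{\bar r}f_k(x)$ obtained by evaluating $e_{\bar r}f_k$ at $p_k^r$. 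Passing to a subsequence with $p_k^r\to\bar y$ and invoking the liminf inequality for $f_k$ then gives $\liminf_k e_rf_k(x)=\liminf_k[f_k(p_k^r)+rq(p_k^r-x)]\ge f(\bar y)+rq(\bar y-x)\ge e_rf(x)$. Hence $e_rf_k(x)\to e_rf(x)$ for all large $r$, and prox-boundedness of $f$ follows since $e_rf$ is then finite. For the remaining assertions I would note, using Lemma \ref{fenchmorlem}, that each $rq-e_rf_k$ is a finite convex function, so their pointwise convergence is automatically uniform on bounded sets; this gives continuous, hence epi-, convergence of the envelopes, and since $r\mapsto e_rf_k(x)$ is concave and nondecreasing the family is equi-Lipschitz in $k$ on compact $r$-intervals, yielding $e_{r_k}f_k\to e_rf$ whenever $r_k\to r>\bar r$.

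\smallskip\noindent\textbf{(ii)$\Rightarrow$(i).} Eventual prox-boundedness is immediate: for $r$ above the threshold of $f$, $\liminf_k e_rf_k(x)=e_rf(x)>-\infty$. For the liminf inequality, if $x_k\to x$ then $f_k(x_k)\ge e_rf_k(x_k)$, and $e_rf_k(x_k)\to e_rf(x)$ because the convergence $e_rf_k\to e_rf$ is locally uniform (again by Lemma \ref{fenchmorlem} and convexity); letting $r\to\infty$ and using $e_rf(x)\uparrow f(x)$ gives $\liminf_k f_k(x_k)\ge f(x)$. The recovery sequence is the crux: fixing $\bar r<s<r$ and evaluating $e_sf_k$ at $p_k^r$ yields $\tfrac{r-s}{2}\|p_k^r-x\|^2\le e_rf_k(x)-e_sf_k(x)$; letting $k\to\infty$ and then $r\to\infty$ with $s$ fixed shows $\limsup_k\|p_k^r-x\|\to0$ as $r\to\infty$, while $\limsup_k f_k(p_k^r)\le\limsup_k e_rf_k(x)=e_rf(x)\le f(x)$ for each such $r$. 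A diagonal choice $r=r_j\to\infty$ then builds $x_k\to x$ with $\limsup_k f_k(x_k)\le f(x)$, so $f_k\overset{e}\to f$. For the convex refinement I would note $\bar r=0$, that $rq-e_rf_k$ is convex for every $r>0$, and that by Proposition \ref{calcenv}(ii) convergence of $e_rf_k$ at one $r>0$ propagates to all $r>0$, so (ii) may be checked at a single parameter.

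\smallskip\noindent The main obstacle is the recovery-sequence half of (ii)$\Rightarrow$(i): the drift $\|p_k^r-x\|$ must be controlled \emph{uniformly in $k$}, and the two-parameter comparison above is the only lever for this --- it is precisely there that eventual prox-boundedness, via the uniform lower bound on $e_sf_k$, does the real work. A secondary nuisance is the bookkeeping around attainment of prox points and around the passage between ``all large $r$'' and ``one $r$'' in the general versus convex cases.
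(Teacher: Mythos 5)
The paper does not prove this statement: it is imported as a Fact with the proof delegated entirely to the citation \cite[Theorem 7.37]{rockwets}, so there is no in-paper argument to compare yours against. Judged on its own, your outline is a faithful reconstruction of the standard proof of that theorem: the limsup estimate via recovery sequences, the liminf estimate via boundedness of proximal points extracted from a two-parameter comparison of envelopes, the upgrade from pointwise to locally uniform convergence through the convexity and finiteness of $rq-e_rf_k$ (Lemma \ref{fenchmorlem}), and the diagonal construction of recovery sequences in the converse direction are the right ingredients, correctly assembled.

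Three points deserve tightening. First, the comparison $\tfrac{r-\bar r}{2}\|p_k^r-x\|^2\le e_rf_k(x)-e_{\bar r}f_k(x)$ should be run at some $s$ with $\bar r<s<r$ rather than at the threshold $\bar r$ itself, where eventual prox-boundedness can fail; you do this correctly in the (ii)$\Rightarrow$(i) half but not in the (i)$\Rightarrow$(ii) half. Second, the ``uniform in $k$ lower bound for $e_sf_k$ on bounded sets'' that you merely record is itself a lemma requiring proof (from $\liminf_k e_sf_k(x_0)>-\infty$ at a single $x_0$ one deduces $f_k\ge\beta-\tfrac{s}{2}\|\cdot-x_0\|^2$ for all large $k$, whence the bound); since the entire liminf half and the boundedness of the prox points lean on it, it should be stated and proved. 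Third, in the convex refinement, Proposition \ref{calcenv}(ii) relates $e_rf$ to $e_1(f/r)$ for a rescaled function and does not by itself propagate pointwise convergence from one $r$ to all $r$; the clean route is to note that $e_rf_k\overset{p}\rightarrow e_rf$ at a single $r$ forces $(f_k+rq)^*\overset{e}\rightarrow(f+rq)^*$ (finite convex functions converging pointwise converge uniformly on compacta), hence $f_k+rq\overset{e}\rightarrow f+rq$ by the bicontinuity of conjugation, hence $f_k\overset{e}\rightarrow f$, after which one reapplies (i)$\Rightarrow$(ii) with $\bar r=0$.
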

Epitoplogy is metrizable; we use the following distance function.
\begin{df}[Attouch-Wets metric]\label{attouchwetsmetric}
Let $r>0.$ For $f,g\in\Gamma_0(\R^n),$ define the distance function
$$d(f,g)=\sum\limits_{i=1}^\infty\frac{1}{2^i}\frac{\sup_{\|x\|\leq i}|e_rf(x)-e_rg(x)|}{1+\sup_{\|x\|\leq i}|e_rf(x)-e_rg(x)|}.$$
\end{df}
\begin{fact}\emph{\cite[Proposition 3.5]{planwang2016}}
The space $(\Gamma_0(\R^n),d)$ is a complete metric space.
\end{fact}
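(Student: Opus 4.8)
The plan is to verify the metric axioms first and then concentrate on completeness, which is the substantive part. That $d$ is symmetric and nonnegative is immediate from the definition, and the triangle inequality follows from the standard observation that $t \mapsto t/(1+t)$ is nondecreasing and subadditive on $[0,\infty)$, applied termwise to the quantities $\rho_i(f,g) := \sup_{\|x\|\le i}|e_rf(x)-e_rg(x)|$, together with the triangle inequality for each $\rho_i$ (which is inherited from the sup-norm on the ball). For the identity of indiscernibles: if $d(f,g)=0$ then $e_rf = e_rg$ on all of $\R^n$; since $f,g\in\Gamma_0(\R^n)$, the Moreau envelope is injective on $\Gamma_0(\R^n)$ (one recovers $f$ from $e_rf$ via $f = (e_rf)^* \ominus q/r$ in the Fenchel-conjugate sense, using Proposition~\ref{calcenv}(vi): $(e_rf)^* = f^* + q/r$, hence $f^* = (e_rf)^* - q/r$ and $f = f^{**}$), so $f = g$.

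For completeness, I would take a Cauchy sequence $\{f_k\}$ in $(\Gamma_0(\R^n),d)$ and argue as follows. First, Cauchyness in $d$ forces, for each fixed $i$, the sequence $k\mapsto e_rf_k$ to be Cauchy in the sup-norm on the ball $\{\|x\|\le i\}$ (because if $\rho_i(f_k,f_\ell)$ did not go to zero, the $i$-th term alone would keep $d(f_k,f_\ell)$ bounded away from zero). Since each $e_rf_k$ is a real-valued continuous (indeed convex) function by Proposition~\ref{nablaprop}, and the space of bounded continuous functions on a fixed closed ball is complete, the $e_rf_k$ converge uniformly on every bounded set to a limit function $h:\R^n\to\R$, which is therefore continuous and convex. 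The key remaining task is to show that $h$ is itself the Moreau envelope $e_rf$ of some $f\in\Gamma_0(\R^n)$, i.e.\ that $h$ lies in the range of the map $e_r(\cdot)$ on $\Gamma_0(\R^n)$; once that is known, $d(f_k,f)\to 0$ follows directly from the uniform-on-bounded-sets convergence and the definition of $d$ (split the series: finitely many early terms are controlled by the uniform convergence, the tail is controlled by the geometric weights $2^{-i}$).

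To identify $h$ as a Moreau envelope, I would pass through Fenchel conjugation. Set $g_k := (e_rf_k)^* = f_k^* + q/r$ by Proposition~\ref{calcenv}(vi); each $g_k - q/r = f_k^*$ is in $\Gamma_0(\R^n)$, so $g_k \in \Gamma_0(\R^n)$ with $g_k \ge q/r$ in the sense that $g_k$ dominates... more precisely $g_k^* = e_rf_k$ is finite everywhere, which is the relevant regularity. Uniform convergence of $e_rf_k \to h$ on bounded sets implies epiconvergence $e_rf_k \overset{e}\rightarrow h$ (Fact~\ref{fact:convergences}, or directly, since for finite convex functions locally uniform convergence and epiconvergence coincide), and epiconvergence of convex functions is preserved under conjugation (Wijsman's theorem / \cite[Theorem 11.34]{rockwets}), so $g_k = (e_rf_k)^* \overset{e}\rightarrow h^*$. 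Define $f := (h^* - q/r)^{**}$—or better, observe that $f_k^* = (e_rf_k)^* - q/r \overset{e}\rightarrow h^* - q/r$, and set $\varphi := h^* - q/r$; then $\varphi$ is a pointwise limit of an epiconvergent sequence of the convex functions $f_k^*$, but I must check $\varphi \in \Gamma_0(\R^n)$, i.e.\ that $\varphi$ is proper and lsc (convexity is automatic as an epilimit). Properness is the delicate point and is exactly where the hypothesis $f_k \in \Gamma_0$ and the structure of $d$ must be used: since $e_rf_k \le h + 1$ on, say, the unit ball for large $k$, and $e_rf_k(x) = \inf_y\{f_k(y)+\frac r2\|y-x\|^2\}$, one gets a uniform lower bound keeping the epilimit from being identically $+\infty$ or taking the value $-\infty$. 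Finally, set $f := \varphi^*{}^* = \varphi$ (it is already convex lsc proper), and compute $e_rf = (f^* + q/r)^* = ((\varphi)^* + q/r)^*$; unwinding $\varphi = h^* - q/r$ gives $\varphi^* = (h^*)^* \oblong (q/r)^* $... rather, using Proposition~\ref{calcenv}(vi) in reverse, $e_rf = (f^* + q/r)^*$ and $f^* + q/r = \varphi^* + q/r$, and since $h = h^{**}$ (as $h$ is finite convex) with $h^* = \varphi + q/r$, we get $\varphi^* + q/r = ((h^* - q/r)^*) + q/r$; the identity $(h^*-q/r)^* = e_r(h^* - q/r)^{?}$... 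The cleanest route, which I would actually write, is: define $f := \partial$-conjugate gymnastics aside, $f$ via $f^* := h^* - q/r$, check $f^* \in \Gamma_0(\R^n)$ so $f \in \Gamma_0(\R^n)$, and then $e_rf = (f^* + q/r)^* = (h^*)^* = h$ by Proposition~\ref{calcenv}(vi) and biconjugation. The main obstacle, then, is the properness/lower-semicontinuity check on $f^* = h^* - q/r$, equivalently showing the candidate limit is a genuine element of $\Gamma_0(\R^n)$ rather than a degenerate function; this is handled using the locally uniform bounds on $e_rf_k$ inherited from the Cauchy property together with the coercivity contributed by the $q/r$ term.
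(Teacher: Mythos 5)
The paper does not actually prove this statement: it is recorded as a Fact imported from \cite{planwang2016}, so there is no in-paper argument to compare yours against. Judged on its own terms, your proposal is essentially correct. The metric axioms are handled properly (subadditivity of $t\mapsto t/(1+t)$ for the triangle inequality, injectivity of $f\mapsto e_rf$ on $\Gamma_0(\R^n)$ via Proposition \ref{calcenv}(vi) and biconjugation for indiscernibility), and the completeness argument has the right skeleton: Cauchyness in $d$ forces $\{e_rf_k\}$ to be uniformly Cauchy on every ball, hence uniformly convergent on bounded sets to a finite convex $h$, and the whole problem reduces to showing $h=e_rf$ for some $f\in\Gamma_0(\R^n)$. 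Your route through conjugation does close: setting $f^*:=h^*-q/r$, convexity and lower semicontinuity of $f^*$ come for free from $f_k^*\overset{e}\rightarrow h^*-q/r$ (epilimits of convex lsc functions are convex and lsc), and the properness you worry about is actually not delicate at all --- since $h$ is finite convex on all of $\R^n$ it is proper, so $h^*$ is a proper lsc convex function, whence $h^*-q/r$ never takes $-\infty$ and is finite wherever $h^*$ is; you should state it that way rather than appealing to a vague ``uniform lower bound.'' Then $e_rf=(f^*+q/r)^*=h^{**}=h$ and the tail estimate gives $d(f_k,f)\to 0$. Two remarks. First, the middle of your writeup (the ``conjugate gymnastics'' passage with several abandoned computations) must be excised; only the final clean route belongs in a proof. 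Second, the paper's own machinery offers a shorter path that avoids conjugation entirely: each $\nabla e_rf_k$ is $r$-Lipschitz (Proposition \ref{1lip}), the gradients converge locally uniformly to $\nabla h$ by Fact \ref{fact:nablaconvex}, so $\nabla h$ is $r$-Lipschitz, and Proposition \ref{1lip} then directly yields $h=e_rf$ for some $f\in\Gamma_0(\R^n)$; this sidesteps both the Wijsman step and the properness discussion.
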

\begin{fact}\emph{\cite[Theorem 25.7]{convanalrock}}\label{fact:nablaconvex}
Let $C$ be an open convex set and $f$ be a convex function that is finite and differentiable on $C.$ Let $\{f_k\}_{k\in\N}$ be a sequence of convex functions finite and differentiable on $C$ such that $\lim_{k\to\infty}f_k(x)=f(x)$ for every $x\in C.$ Then$$\lim\limits_{k\to\infty}\nabla f_k(x)=\nabla f(x)~\forall x\in C.$$In fact, the mappings $\nabla f_k$ converge to $\nabla f$ uniformly on every closed bounded subset of $C.$
\end{fact}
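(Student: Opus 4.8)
The plan is to obtain convergence of the gradients from two ingredients: (a) pointwise convergence of finite convex functions on the open set $C$ automatically upgrades to uniform convergence on compact subsets, together with a uniform local bound on the gradients; and (b) the subgradient inequality forces every cluster point of $\{\nabla f_k(x)\}$ to lie in $\partial f(x)$, which equals the singleton $\{\nabla f(x)\}$ because $f$ is differentiable at $x$. Combining a uniform gradient bound with uniqueness of the cluster point yields pointwise convergence, and a compactness argument then yields the uniform statement.

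First I would establish that the $f_k$ are locally equi-Lipschitz. Fix $\bar x\in C$, choose $\rho>0$ with $\overline B(\bar x,2\rho)\subset C$, and pick points $z_0,\dots,z_n$ whose convex hull is a simplex containing $\overline B(\bar x,\rho)$. Since $f_k(z_j)\to f(z_j)$ for each $j$, the numbers $f_k(z_j)$ are bounded, so by convexity $f_k\leq M$ on that simplex for all $k$ and some $M$; reflecting through the centre shows $f_k$ is bounded below on $\overline B(\bar x,\rho)$ as well, and the standard convexity estimate then gives a Lipschitz constant $L$ for $f_k$ on $\overline B(\bar x,\rho/2)$ independent of $k$, whence $\|\nabla f_k(x)\|\leq L$ there. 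Covering a given closed bounded $K\subset C$ by finitely many such half-balls, the sequence $\{f_k\}$ is equi-Lipschitz on a neighbourhood of $K$, so the Arzel\`a--Ascoli theorem (or a direct $\varepsilon/3$ argument) upgrades $f_k\to f$ to uniform convergence on $K$, and the gradients $\nabla f_k$ are uniformly bounded on $K$.

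Next, for pointwise convergence of the gradients, fix $x\in C$ and let $v$ be any cluster point of the bounded sequence $\{\nabla f_k(x)\}$, say $\nabla f_{k_j}(x)\to v$. For every $y\in C$ the subgradient inequality gives
$$f_{k_j}(y)\geq f_{k_j}(x)+\langle\nabla f_{k_j}(x),y-x\rangle;$$
letting $j\to\infty$ and using pointwise convergence yields $f(y)\geq f(x)+\langle v,y-x\rangle$, so $v\in\partial f(x)=\{\nabla f(x)\}$, hence $v=\nabla f(x)$. Since the bounded sequence $\{\nabla f_k(x)\}$ has $\nabla f(x)$ as its only cluster point, $\nabla f_k(x)\to\nabla f(x)$. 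For the uniform statement on a closed bounded $K\subset C$ I would argue by contradiction: if it failed there would be $\varepsilon>0$, indices $k_j$ and points $x_j\in K$ with $\|\nabla f_{k_j}(x_j)-\nabla f(x_j)\|\geq\varepsilon$; passing to subsequences, $x_j\to\bar x\in K$ and, by the uniform gradient bound, $\nabla f_{k_j}(x_j)\to w$. Writing the subgradient inequality at $x_j$ and letting $j\to\infty$, using uniform convergence of $f_{k_j}$ near $\bar x$ together with pointwise convergence at each fixed $y$, one again gets $w\in\partial f(\bar x)=\{\nabla f(\bar x)\}$; since $\nabla f$ is continuous on the open set $C$, $\nabla f(x_j)\to\nabla f(\bar x)=w$, contradicting the separation by $\varepsilon$.

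The main obstacle is the first step: converting pointwise convergence of the convex functions into uniform convergence on compacta with a uniform gradient bound. This rests on the elementary but slightly technical fact that a convex function bounded above on a ball is Lipschitz on a smaller concentric ball, plus the observation that pointwise convergence at the finitely many vertices of a covering simplex supplies the needed uniform upper bound. Everything afterwards is a soft argument with the subgradient inequality, boundedness, and compactness. Alternatively, one may simply invoke the classical result that a pointwise-convergent sequence of finite convex functions converges uniformly on compact subsets of the interior of the common effective domain, which reduces the proof to the second and third steps above.
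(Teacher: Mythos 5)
The paper offers no proof of this statement; it is quoted as a Fact with a citation to Rockafellar's \emph{Convex Analysis}, Theorem 25.7, so there is nothing internal to compare against. Your argument is correct and essentially reconstructs the classical proof: (a) pointwise convergence at the vertices of a covering simplex gives a uniform upper bound, reflection gives a lower bound, the bounded-above-implies-Lipschitz estimate gives local equi-Lipschitzness and hence uniform convergence on compacta with uniformly bounded gradients; (b) the subgradient inequality passes to the limit along any convergent subsequence of $\{\nabla f_{k}(x)\}$, forcing every cluster point into $\partial f(x)=\{\nabla f(x)\}$; (c) the uniform statement follows by the contradiction/compactness argument, using continuity of $\nabla f$ on $C$ (itself a standard fact for differentiable convex functions on open sets). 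One harmless quantitative slip: a simplex whose inscribed ball is $\overline B(\bar x,\rho)$ need not fit inside $\overline B(\bar x,2\rho)$ once $n\geq 2$ (the circumradius-to-inradius ratio of a regular $n$-simplex is $n$), so to guarantee the vertices $z_0,\dots,z_n$ lie in $C$ you should shrink $\rho$ so that $\overline B(\bar x,(n+1)\rho)\subset C$; this does not affect the argument. Rockafellar's own route runs through his Theorem 10.8 (uniform convergence of convex functions on compacta) and a monotonicity argument on difference quotients, whereas your step (b) uses the cluster-point/subgradient device; both are standard and equally rigorous, and yours has the advantage of being self-contained modulo the bounded-above-implies-Lipschitz lemma.
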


\subsection{Monotone operators and resolvents}

In this section, we list a number of facts involving monotonicity, maximal monotonicity and cyclic monotonicity.
\begin{df}
An operator $A:\R^n\rightrightarrows\R^n$ is \emph{monotone} if $\langle x^*-y^*,x-y\rangle\geq0~\forall (x,x^*),(y,y^*)\in\gra A.$ The monotone operator $A$ is \emph{maximally monotone} if there does not exist a monotone operator that contains $A.$
\end{df}
\begin{df}
The \emph{resolvent} $J_A$ of a mapping $A:\R^n\rightrightarrows\R^n$ is defined
$$J_A=(\Id+A)^{-1}.$$
\end{df}
\begin{fact}\emph{\cite[Lemma 12.14]{rockwets}\label{rockwets12.14}}
Every mapping $A:\R^n\rightrightarrows\R^n$ obeys the identity
$$\Id-(\Id+A)^{-1}=(\Id+A^{-1})^{-1}.$$
\end{fact}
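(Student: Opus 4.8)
The plan is to establish the identity by showing that the two set-valued mappings have the same graph, since mappings $\R^n\rightrightarrows\R^n$ coincide exactly when their graphs do. Throughout I would use only the definitions recorded in the Notation subsection: the set-valued inverse $A^{-1}$ via $\gra A^{-1}=\{(x^*,x):x^*\in Ax\}$, the resolvent $J_A=(\Id+A)^{-1}$, and the convention that for set-valued maps $\Id+A$ is $z\mapsto\{z+w:w\in Az\}$ and $\Id-(\Id+A)^{-1}$ is $x\mapsto\{x-z:z\in(\Id+A)^{-1}x\}$. Fixing $x\in\R^n$, a point $y$ lies in $\bigl(\Id-(\Id+A)^{-1}\bigr)(x)$ precisely when there exists $z$ with $z\in(\Id+A)^{-1}(x)$ and $y=x-z$.

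Next I would unwind the membership $z\in(\Id+A)^{-1}(x)$ into the equivalent statement $x\in z+Az$, i.e.\ $x-z\in Az$. Substituting $y=x-z$ (equivalently $z=x-y$), this reads $y\in A(x-y)$; by the definition of $A^{-1}$ this is the same as $x-y\in A^{-1}(y)$, hence $x\in y+A^{-1}y=(\Id+A^{-1})(y)$, which by the definition of the inverse once more says $y\in(\Id+A^{-1})^{-1}(x)$. Reading this chain of equivalences in reverse gives the opposite inclusion, so the two graphs agree and the identity is proved.

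I do not expect a genuine obstacle here; the only thing needing care is the bookkeeping for set-valued arithmetic, namely keeping the difference $\Id-(\Id+A)^{-1}$ and the sum $\Id+A$ interpreted pointwise as above. Once those conventions are fixed, every step is a bona fide "if and only if," so no monotonicity, maximality, or single-valuedness of $A$ is used, consistent with the claim holding for arbitrary $A:\R^n\rightrightarrows\R^n$. As an alternative route one could note that $\gra(\Id+A)^{-1}$ is the image of $\gra A$ under the linear automorphism $(z,w)\mapsto(z+w,z)$ of $\R^n\times\R^n$, while $\gra A^{-1}$ is the image of $\gra A$ under $(z,w)\mapsto(w,z)$, and then check that the two composite transformations sending $\gra A$ to the graphs of $\Id-(\Id+A)^{-1}$ and of $(\Id+A^{-1})^{-1}$ coincide; but the direct graph-chasing argument above is cleaner and self-contained.
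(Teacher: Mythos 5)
Your argument is correct: the chain of equivalences $y=x-z$, $z\in(\Id+A)^{-1}x \Leftrightarrow x-z\in Az \Leftrightarrow y\in A(x-y) \Leftrightarrow x-y\in A^{-1}y \Leftrightarrow y\in(\Id+A^{-1})^{-1}x$ is exactly the standard graph-chasing proof, and you are right that no monotonicity or single-valuedness is needed. The paper itself states this as a cited Fact from Rockafellar--Wets and supplies no proof, so there is nothing to compare against; your proof matches the one in the cited source.
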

\begin{fact}\emph{\cite[Lemma 12.12]{rockwets}\label{rockwets12.12}}
Let $A:\R^n\rightrightarrows\R^n$ be monotone, $\lambda>0.$ Then $(\Id+\lambda A)^{-1}$ is monotone and nonexpansive. Moreover, $A$ is maximally monotone if and only if $\dom(\Id+\lambda A)^{-1}=\R^n.$ In that case, $(\Id+\lambda A)^{-1}$ is maximally monotone as well, and it is a single-valued mapping from all of $\R^n$ into itself.
\end{fact}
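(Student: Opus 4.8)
The plan is to isolate the single inequality satisfied by the resolvent $T:=(\Id+\lambda A)^{-1}$, from which monotonicity, nonexpansiveness and single-valuedness all follow, then to settle the ``if'' half of the equivalence by a short contradiction, and finally to prove the ``only if'' half, which is Minty's theorem and is where all the work lies. First I would record the basic inequality: if $(u,x),(v,y)\in\gra T$ then $u-x\in\lambda Ax$ and $v-y\in\lambda Ay$, so $\bigl(x,\tfrac1\lambda(u-x)\bigr)$ and $\bigl(y,\tfrac1\lambda(v-y)\bigr)$ lie in $\gra A$, and monotonicity of $A$ gives $\langle x-y,(u-x)-(v-y)\rangle\ge0$, i.e.\ $\langle u-v,x-y\rangle\ge\|x-y\|^{2}$. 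The right-hand side being nonnegative, $T$ is monotone; Cauchy--Schwarz then gives $\|x-y\|\le\|u-v\|$, so $T$ is nonexpansive; and $u=v$ forces $x=y$, so $T$ is single-valued on $\dom T=\ran(\Id+\lambda A)$.

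For the ``if'' direction, suppose $\dom T=\R^{n}$, i.e.\ $\Id+\lambda A$ is onto, and let $\hat A$ be a monotone operator with $\gra\hat A\supseteq\gra A$ and $(x_{0},y_{0})\in\gra\hat A$. Pick $x_{1}$ with $x_{0}+\lambda y_{0}\in x_{1}+\lambda Ax_{1}$ and set $y_{1}:=\tfrac1\lambda(x_{0}+\lambda y_{0}-x_{1})\in Ax_{1}\subseteq\hat Ax_{1}$. Monotonicity of $\hat A$ applied to $(x_{0},y_{0})$ and $(x_{1},y_{1})$ gives $\langle x_{0}-x_{1},y_{0}-y_{1}\rangle\ge0$; but $y_{0}-y_{1}=\tfrac1\lambda(x_{1}-x_{0})$, so the left-hand side equals $-\tfrac1\lambda\|x_{0}-x_{1}\|^{2}$, forcing $x_{0}=x_{1}$, hence $y_{0}=y_{1}$ and $(x_{0},y_{0})\in\gra A$. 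Thus $A$ is maximal.

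The main obstacle is the ``only if'' direction: maximal monotonicity of $A$ implies $\Id+\lambda A$ is onto. Since $A$ is maximally monotone exactly when $\lambda A$ is, I may take $\lambda=1$ and must show $D:=\ran(\Id+A)=\R^{n}$; the classical route is to show $D$ is nonempty, closed and open and then invoke connectedness of $\R^{n}$. Nonemptiness is immediate, and closedness follows because, if $w_{k}\in D$ with $w_{k}\to w$, the resolvent images $x_{k}:=(\Id+A)^{-1}w_{k}$ are Cauchy by the basic inequality, so $x_{k}\to x$ while $(x_{k},w_{k}-x_{k})\to(x,w-x)$ stays in $\gra A$, which is closed by maximality; hence $w\in D$. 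Openness is the crux: given $w_{0}\in D$ with resolvent image $x_{0}$, I would fix a small closed ball $C$ centred at $x_{0}$, pass to the monotone operator $A+N_{C}$ (of bounded domain), note that $(\Id+A+N_{C})^{-1}w_{0}=x_{0}$ because $N_{C}(x_{0})=\{0\}$, and observe that for $w$ near $w_{0}$ with $w\in\ran(\Id+A+N_{C})$ the resolvent image lies in $\intt C$ by nonexpansiveness, so $N_{C}$ vanishes there and $w\in\ran(\Id+A)=D$. What remains -- that $\ran(\Id+A+N_{C})$ contains a whole neighbourhood of $w_{0}$ (in fact all of $\R^{n}$) -- is the one genuinely nontrivial ingredient; it rests on a Brouwer fixed-point / compactness argument for monotone operators of bounded domain, equivalently on the local boundedness of maximally monotone operators on the relative interior of their domains.

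Finally, when $\dom T=\R^{n}$ the basic inequality makes $T$ a single-valued self-map of $\R^{n}$, continuous because nonexpansive, and a continuous monotone map defined on all of $\R^{n}$ is automatically maximally monotone: if $(\bar x,\bar x^{*})$ is monotonically related to $\gra T$, then $0\le\langle T(\bar x+tz)-\bar x^{*},z\rangle$ for every $z$ and every $t>0$, and letting $t\downarrow0$ gives $\langle T\bar x-\bar x^{*},z\rangle\ge0$ for all $z$, so $\bar x^{*}=T\bar x$. This yields all the assertions of the statement.
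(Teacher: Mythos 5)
The paper does not prove this statement at all: it is quoted as a Fact, with the proof deferred to \cite[Lemma 12.12]{rockwets}, so there is no in-paper argument to compare against. Judged on its own terms, your write-up is correct and complete for everything except the implication ``$A$ maximally monotone $\Rightarrow\dom(\Id+\lambda A)^{-1}=\R^n$''. The basic inequality $\langle u-v,x-y\rangle\ge\|x-y\|^{2}$, the deductions of monotonicity, (firm) nonexpansiveness and single-valuedness of the resolvent, the ``if'' half of the equivalence, the closedness of $D=\ran(\Id+A)$, and the maximality of $T$ once it is a continuous monotone self-map of $\R^n$ are all argued correctly.

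The genuine gap is the openness step, and it is exactly where Minty's theorem lives. You reduce everything to the claim that $\ran(\Id+A+N_C)$ contains a neighbourhood of $w_0$, and then declare that this ``rests on a Brouwer fixed-point / compactness argument for monotone operators of bounded domain.'' Without that ingredient nothing has been proved, since the open-closed-connected scheme is vacuous if openness is assumed; and the appeal is not routine. To supply it you must either (a) prove a Debrunner--Flor-type surjectivity statement for monotone operators with bounded domain directly from Brouwer's theorem, or (b) know that $A+N_C$ is maximally monotone, which in general needs a sum theorem whose standard proofs themselves invoke Minty's theorem --- a circularity you would have to avoid explicitly. (Also, the parenthetical ``equivalently on the local boundedness of maximally monotone operators'' overstates things: local boundedness on the interior of the domain does not by itself yield surjectivity.) For the record, the cited source establishes the result by a different, self-contained route: the Minty parametrization $(x,y)\mapsto x+y$ identifies $\gra A$ with the graph of a nonexpansive map on $D$, and the Kirszbraun--Valentine extension theorem then forces $D=\R^n$ when $A$ is maximal. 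Either that route or a genuine Brouwer argument is needed to close your proof.
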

\begin{fact}\emph{\cite[Proposition 23.7]{convmono}\label{nonexpequivmono}}
Let $D$ be a nonempty subset of $\R^n,$ $T:D\rightarrow\R^n,$ $A=T^{-1}-\Id.$ Then $T$ is firmly nonexpansive if and only if $A$ is monotone.
\end{fact}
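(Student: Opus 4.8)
The plan is to prove the equivalence by a direct unwinding of definitions, since ``$A$ monotone'' and ``$T$ firmly nonexpansive'' are each a single inequality ranging over pairs of points, and the passage from one to the other is purely algebraic.

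First I would identify the graph of $A=T^{-1}-\Id$. By definition of the set-valued inverse, $\gra T^{-1}=\{(Tx,x):x\in D\}$, and subtracting $\Id$ gives
\[
\gra A=\{(Tx,\,x-Tx):x\in D\}.
\]
Thus $\dom A=T(D)$, and the one bookkeeping point to watch is that every pair in $\gra A$ is of the form $(Tx,x-Tx)$ for some $x\in D$, so the universal quantifier over pairs in $\gra A$ is the same as the universal quantifier over $x,y\in D$; note also that $A$ is genuinely set-valued even though $T$ is a function, which is why one argues through its graph rather than through a pointwise formula.

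Next I would write out the monotonicity inequality for $A$: for all $x,y\in D$,
\[
\langle (x-Tx)-(y-Ty),\,Tx-Ty\rangle\ge0.
\]
Expanding the first argument as $(x-y)-(Tx-Ty)$ turns the left-hand side into $\langle x-y,\,Tx-Ty\rangle-\|Tx-Ty\|^2$, so the displayed inequality holds for all $x,y\in D$ if and only if
\[
\|Tx-Ty\|^2\le\langle Tx-Ty,\,x-y\rangle\qquad\text{for all }x,y\in D,
\]
which is precisely the defining inequality of a firmly nonexpansive mapping. Every step in this chain is reversible, so the equivalence follows.

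I do not expect any real obstacle; the content is entirely a definition-chase, and the only thing to be careful about is the set-valuedness of $T^{-1}$ noted above. If one also wants the equivalent ``two-square'' form $\|Tx-Ty\|^2+\|(x-Tx)-(y-Ty)\|^2\le\|x-y\|^2$ of firm nonexpansiveness, it drops out of the same computation via the identity $\|x-y\|^2=\|Tx-Ty\|^2+\|(x-Tx)-(y-Ty)\|^2+2\big(\langle Tx-Ty,\,x-y\rangle-\|Tx-Ty\|^2\big)$, obtained by writing $x-y=(Tx-Ty)+\big((x-Tx)-(y-Ty)\big)$ and expanding the square.
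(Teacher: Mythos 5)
Your proof is correct, and the paper itself offers no argument for this statement---it is quoted as a Fact with a citation to \cite[Proposition 23.7]{convmono}, whose standard proof is exactly the definition-chase you carry out: parametrize $\gra A$ as $\{(Tx,\,x-Tx):x\in D\}$ and observe that the monotonicity inequality expands to the firm-nonexpansiveness inequality. Your attention to the set-valuedness of $T^{-1}$ (so that the quantifier over $\gra A$ really is the quantifier over $x,y\in D$) is the one point that needs care, and you handle it properly.
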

\begin{fact}\emph{\cite[Theorem 6.6]{fitz}\label{cyclic}}
Let $T:\R^n\rightarrow\R^n.$ Then $T$ is the resolvent of the maximally cyclically monotone operator $A:\R^n\rightrightarrows\R^n$ if and only if $T$ has full domain, $T$ is firmly nonexpansive, and for every set of points $\{x_1,\ldots,x_m\}$ where the integer $m\geq2$ and $x_{m+1}=x_1,$ one has
$$\sum\limits_{i=1}^m\langle x_i-Tx_i,Tx_i-Tx_{i+1}\rangle\geq0.$$
\end{fact}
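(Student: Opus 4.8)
The plan is to prove the biconditional by working entirely with the operator $A$ and its resolvent $T=J_A=(\Id+A)^{-1}$, combining the paper's resolvent facts with the definition of cyclic monotonicity. Throughout I would use that $A$ is cyclically monotone precisely when $\sum_{i=1}^m\langle a_{i+1}-a_i,a_i^*\rangle\le 0$ for every cyclic family $(a_1,a_1^*),\dots,(a_m,a_m^*)\in\gra A$ with $a_{m+1}=a_1$, and that an operator which is both maximally monotone and cyclically monotone is automatically maximally cyclically monotone (any cyclically monotone extension is in particular a monotone extension, so maximality as a monotone operator forces equality).

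For the forward implication I would start from $T=J_A$ with $A$ maximally cyclically monotone, hence maximally monotone. Fact \ref{rockwets12.12} with $\lambda=1$ then gives $\dom(\Id+A)^{-1}=\R^n$ and that $T=(\Id+A)^{-1}$ is single-valued on $\R^n$, so $T$ has full domain. Inverting $T=(\Id+A)^{-1}$ gives $T^{-1}-\Id=A$, which is monotone, so Fact \ref{nonexpequivmono} makes $T$ firmly nonexpansive. For the cyclic inequality, given $x_1,\dots,x_m$ with $x_{m+1}=x_1$ I would set $p_i=Tx_i$; then $x_i\in p_i+Ap_i$, i.e. $(p_i,x_i-p_i)\in\gra A$, and $p_{m+1}=Tx_1=p_1$, so cyclic monotonicity of $A$ applied to these pairs yields $\sum_{i=1}^m\langle p_{i+1}-p_i,x_i-p_i\rangle\le 0$; negating and using symmetry of the inner product is exactly $\sum_{i=1}^m\langle x_i-Tx_i,Tx_i-Tx_{i+1}\rangle\ge 0$.

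For the converse I would define $A:=T^{-1}-\Id$. Firm nonexpansiveness of $T$ and Fact \ref{nonexpequivmono} give that $A$ is monotone; a short graph computation gives $\Id+A=T^{-1}$ and hence $J_A=(\Id+A)^{-1}=(T^{-1})^{-1}=T$. Since $\dom(\Id+A)^{-1}=\dom T=\R^n$, Fact \ref{rockwets12.12} shows $A$ is maximally monotone, so it only remains to show $A$ is cyclically monotone. Given a cyclic family $(y_i,y_i^*)\in\gra A$, $i=1,\dots,m$, with $y_{m+1}=y_1$, I would observe that every such pair has the form $(Tx_i,\,x_i-Tx_i)$ with $x_i:=y_i+y_i^*$ uniquely recovering the preimage, so applying the hypothesis to $x_1,\dots,x_m$ (with $x_{m+1}:=x_1$, consistent with $y_{m+1}=y_1$) and rearranging gives $\sum_{i=1}^m\langle y_{i+1}-y_i,y_i^*\rangle\le 0$. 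Thus $A$ is maximally cyclically monotone and $T=J_A$.

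The routine parts are the graph identities $\Id+(T^{-1}-\Id)=T^{-1}$ and $(T^{-1})^{-1}=T$, and the sign-bookkeeping that converts the paper's form $\langle x_i-Tx_i,Tx_i-Tx_{i+1}\rangle$ into the textbook cyclic-monotonicity form $\langle y_{i+1}-y_i,y_i^*\rangle$. The delicate step, and the main obstacle, is the converse's cyclic-monotonicity verification: one must notice that a graph point $(y,y^*)\in\gra A$ encodes its own preimage via $x=y+y^*$, so that the cyclic closure condition $y_{m+1}=y_1$ on graph points transfers faithfully to the condition $x_{m+1}=x_1$ demanded by the hypothesis, even though $T$ need not be injective. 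I would also remark (or cite Rockafellar) that $A$ maximally cyclically monotone is equivalent to $A=\partial f$ for some $f\in\Gamma_0(\R^n)$ with $T=\Prox_f^1$, which is the bridge between this fact and the Moreau-envelope theme of the paper, but the operator-theoretic argument above sidesteps the need for that theorem.
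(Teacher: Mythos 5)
This statement is one of the imported results in Section~\ref{sec:prelim}: the paper explicitly defers its proof to the cited reference and gives no argument of its own, so there is nothing internal to compare your proposal against. Judged on its own terms, your argument is correct and self-contained, and it is the natural operator-theoretic proof: the translation between cyclic monotonicity of $A$ and the displayed inequality for $T$ via the graph identification $(p_i,x_i-p_i)=(Tx_i,x_i-Tx_i)$ is exactly right in both directions, including the sign bookkeeping and the observation that a graph point $(y,y^*)$ of $A=T^{-1}-\Id$ determines its resolvent preimage as $x=y+y^*$, so the cyclic closure transfers correctly without needing $T$ injective. Two small points deserve explicit justification rather than assertion. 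First, in the forward direction the step ``maximally cyclically monotone, hence maximally monotone'' is not formal: it follows from Fact~\ref{maxcyc} (so $A=\partial f$ for some $f\in\Gamma_0(\R^n)$) together with Rockafellar's theorem that subdifferentials of proper lsc convex functions are maximally monotone; the latter is not stated in the paper and should be cited. Second, your closing remark that maximal monotonicity plus cyclic monotonicity yields maximal cyclic monotonicity (because a cyclically monotone extension is in particular a monotone extension, using the $m=2$ cycles) is correct and is the right way to finish the converse. With those citations supplied, the proof is complete.
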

\begin{fact}\emph{\cite[Theorem 22.14]{convmono}\label{maxcyc}}
Let $A:\R^n\rightrightarrows\R^n.$ Then $A$ is maximally cyclically monotone if and only if there exists $f\in\Gamma_0(\R^n)$ such that $A=\partial f.$
\end{fact}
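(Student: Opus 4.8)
The plan is to establish the two implications separately: for ``$A=\partial f\Rightarrow A$ maximally cyclically monotone'' I would verify cyclic monotonicity by telescoping subgradient inequalities and get maximality for free from maximal monotonicity of $\partial f$ (which the resolvent facts above already deliver); for the converse I would build Rockafellar's antiderivative of $A$ and show it lies in $\Gamma_0(\R^n)$ and subdifferentiates to an operator containing $A$.

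For the reverse implication, suppose $A=\partial f$ with $f\in\Gamma_0(\R^n)$. Given any $(x_0,x_0^*),\dots,(x_m,x_m^*)\in\gra\partial f$ with $x_{m+1}=x_0$, the subgradient inequality gives $\langle x_i^*,x_{i+1}-x_i\rangle\le f(x_{i+1})-f(x_i)$, and summing telescopes to $\sum_{i=0}^m\langle x_i^*,x_{i+1}-x_i\rangle\le 0$, so $\partial f$ is cyclically monotone (and in particular monotone, the two-point case). For maximality I would note it suffices that $\partial f$ be maximally \emph{monotone}, since any cyclically monotone $B$ with $\gra\partial f\subseteq\gra B$ is then monotone and hence equal to $\partial f$. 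To obtain maximal monotonicity I would invoke Fact~\ref{resolventfact} to write $(\Id+\partial f)^{-1}=\Prox_f^1$, observe via Proposition~\ref{nablaprop} that this resolvent is single-valued with domain all of $\R^n$, and apply Fact~\ref{rockwets12.12} with $\lambda=1$.

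For the forward implication, assume $A$ is maximally cyclically monotone; since $\{(0,0)\}$ is cyclically monotone, maximality forces $\gra A\neq\varnothing$, so fix $(x_0,x_0^*)\in\gra A$ and set
$$f(x)=\sup\left\{\langle x_n^*,x-x_n\rangle+\sum_{i=0}^{n-1}\langle x_i^*,x_{i+1}-x_i\rangle\right\},$$
the supremum over all $n\in\N$ and all finite chains $(x_1,x_1^*),\dots,(x_n,x_n^*)\in\gra A$ appended to the fixed base point $(x_0,x_0^*)$. As a supremum of affine functions $f$ is convex and lsc; evaluating at $x_0$, the empty chain ($n=0$) gives $f(x_0)\ge 0$ while cyclic monotonicity applied to the cycle $(x_0,x_0^*),\dots,(x_n,x_n^*),x_0$ forces every competing value $\le 0$, so $f(x_0)=0$ and $f$ is proper, i.e. $f\in\Gamma_0(\R^n)$. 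For $A\subseteq\partial f$, take $(\bar x,\bar x^*)\in\gra A$ and $y\in\R^n$: given $\varepsilon>0$, choose a chain whose value at $\bar x$ exceeds $f(\bar x)-\varepsilon$, append $(\bar x,\bar x^*)$, and evaluate the lengthened chain's functional at $y$ to get $f(y)\ge f(\bar x)+\langle\bar x^*,y-\bar x\rangle-\varepsilon$; letting $\varepsilon\downarrow 0$ yields $\bar x^*\in\partial f(\bar x)$. Finally, by the implication already proved $\partial f$ is cyclically monotone and contains $A$, so maximality of $A$ gives $A=\partial f$.

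I expect the construction and properness of $f$ to be the main obstacle: one must produce the correct antiderivative and then use the \emph{full} cyclic monotonicity of $A$ — not merely ordinary monotonicity — to bound the defining supremum at the base point, and the ``append one more pair'' device proving $A\subseteq\partial f$ has to be arranged so that the telescoped tail of the extended chain reproduces $f(\bar x)$ up to $\varepsilon$. Once $f$ is in hand, the maximality assertions in both directions are short, being inherited from maximal monotonicity of $\partial f$ in one direction and from the hypothesis together with the established cyclic monotonicity of $\partial f$ in the other.
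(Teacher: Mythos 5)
The paper does not prove this statement at all—it is imported as a Fact with the citation \cite[Theorem 22.14]{convmono}—and your argument is exactly the classical Rockafellar proof used in that reference: telescoped subgradient inequalities give cyclic monotonicity, maximality follows from maximal monotonicity of $\partial f$ (correctly obtained here from Fact~\ref{resolventfact}, Proposition~\ref{nablaprop} and the Minty criterion in Fact~\ref{rockwets12.12}), and the converse uses the antiderivative supremum, properness at the base point via the closed cycle, and the append-one-pair device. The only loose end is that ``choose a chain whose value at $\bar x$ exceeds $f(\bar x)-\varepsilon$'' tacitly assumes $f(\bar x)<\infty$, but this is harmless: if $f(\bar x)=+\infty$, appending $(\bar x,\bar x^*)$ to chains of arbitrarily large value at $\bar x$ would force $f\equiv+\infty$, contradicting $f(x_0)=0$.
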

\begin{fact}\emph{(Baillon-Haddad Theorem) \cite[Corollary 10]{baillonhaddad}\label{baillon}}
Let $\varphi$ be a convex $\mathcal{C}^1$ function on $\R^n.$ Let $A=\nabla\varphi.$ If $A$ is $L$-Lipschitz, then
$$\langle Au-Av,u-v\rangle\geq\frac{1}{L}\|Au-Av\|^2\qquad\forall u,v\in\R^n.$$ Hence, $\frac{A}{L}=\nabla\left(\frac{\phi}{L}\right)$ is firmly nonexpansive and 1-Lipschitz.
Consequently, $\frac{A}{L}$ is a proximal mapping: $$\frac{A}{L}=\Prox_g^1\mbox{ for some } g\in\Gamma_0(\R^n).$$
\end{fact}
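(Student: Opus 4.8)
The plan is to establish the cocoercivity inequality first and then read off the two displayed consequences from the calculus already developed. By positive homogeneity there is no loss of generality in assuming $L=1$: replacing $\varphi$ by $\varphi/L$ replaces $A=\nabla\varphi$ by $A/L$, which is then $1$-Lipschitz, and the inequality for a general Lipschitz constant is recovered by undoing this scaling at the end.

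For the main step I would fix $v\in\R^n$ and introduce the tilted function $\psi_v:=\varphi-\langle Av,\cdot\rangle$. It is convex and $\mathcal C^1$, its gradient $\nabla\psi_v=A-Av$ is again $1$-Lipschitz, and $\nabla\psi_v(v)=0$, so $v$ is a global minimizer of $\psi_v$. Applying the elementary descent lemma (the quadratic upper bound valid for $\mathcal C^1$ functions with $1$-Lipschitz gradient) at an arbitrary $u$ with increment $-\nabla\psi_v(u)$ gives $\psi_v(u-\nabla\psi_v(u))\le\psi_v(u)-\tfrac12\|\nabla\psi_v(u)\|^2$, and combining this with $\psi_v(v)\le\psi_v(u-\nabla\psi_v(u))$ and unravelling the definition of $\psi_v$ yields
$$\varphi(v)-\varphi(u)\le\langle Av,v-u\rangle-\tfrac12\|Au-Av\|^2.$$
Writing the same inequality with $u$ and $v$ interchanged and adding the two produces $\langle Au-Av,u-v\rangle\ge\|Au-Av\|^2$; rescaling back gives the asserted $\langle Au-Av,u-v\rangle\ge\tfrac1L\|Au-Av\|^2$. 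I expect this to be the only step requiring an idea (the trick being the reduction to a tilted function minimized at the chosen point); everything after is bookkeeping.

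Dividing that inequality by $L$ shows $\langle\tfrac{A}{L}u-\tfrac{A}{L}v,u-v\rangle\ge\|\tfrac{A}{L}u-\tfrac{A}{L}v\|^2$, i.e.\ $A/L$ is firmly nonexpansive; it is plainly $1$-Lipschitz and equals $\nabla(\varphi/L)$. For the last assertion I would put $h:=\varphi/L\in\Gamma_0(\R^n)$ and $g:=h^*-q$. Since $\nabla h=A/L$ is $1$-Lipschitz, $h^*$ is $1$-strongly convex by the standard duality between Lipschitz smoothness and strong convexity of the conjugate, so $g$ is convex; it is also lsc, and proper because $h^*$ never equals $-\infty$ while $q$ is finite-valued, so $g\in\Gamma_0(\R^n)$. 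Now $(g+q)^*=h^{**}=h$, so Proposition~\ref{calcenv}(iv) gives $e_1g=q-h$, and differentiating and comparing with Proposition~\ref{nablaprop}(ii) at $r=1$ (which says $\nabla e_1g=\Id-\Prox_g^1$) yields $\Prox_g^1=\nabla h=A/L$, as required. Alternatively one may verify the hypotheses of Fact~\ref{cyclic} directly: $A/L$ has full domain and is firmly nonexpansive, and its cyclic inequality is precisely the cyclic monotonicity of $(A/L)^{-1}-\Id=\partial h^*-\Id=\partial(h^*-q)$, which holds by Fact~\ref{maxcyc} once one knows $h^*-q$ is convex. The main obstacle is the cocoercivity inequality; the remaining steps are routine manipulations with the Moreau-envelope calculus and conjugate duality.
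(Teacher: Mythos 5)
The paper does not actually prove this statement: it is quoted as a Fact with only the citation to \cite{baillonhaddad} (the preliminaries explicitly defer all Facts to their sources), so there is no internal proof to compare against, and your blind attempt supplies what the paper omits. Your argument is correct and is the standard modern proof of cocoercivity: after the harmless normalization $L=1$, you tilt $\varphi$ by the linear functional $\langle Av,\cdot\rangle$ so that $v$ becomes a global minimizer of the convex $\mathcal C^1$ function $\psi_v$, apply the descent lemma at $u$ with step $-\nabla\psi_v(u)$, and symmetrize in $u$ and $v$; the rescaling, the firm nonexpansiveness of $A/L$, and its $1$-Lipschitz continuity (via Cauchy--Schwarz) all follow correctly. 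The derivation of the prox representation from the paper's own calculus is also sound: with $h=\varphi/L$ and $g=h^*-q$ you get $(g+q)^*=h^{**}=h$, hence $e_1g=q-h$ by Proposition~\ref{calcenv}(iv), and comparing gradients with Proposition~\ref{nablaprop}(ii) gives $\Prox_g^1=\nabla h=A/L$. The one ingredient you import without proof is that $1$-Lipschitz continuity of $\nabla h$ forces $h^*-q$ to be convex (equivalently, $h^*$ to be $1$-strongly convex); this is a genuinely standard fact, but it is not among the paper's listed preliminaries and is essentially the remaining content of the Baillon--Haddad circle of equivalences, so you should either cite it explicitly (it is recorded in \cite{convmono}) or close it from what you have already established: firm nonexpansiveness of $T=\nabla h$ yields monotonicity of $T^{-1}-\Id=\partial h^*-\Id$ by Fact~\ref{nonexpequivmono}, and convexity of the lsc function $h^*-q$ then follows from the standard characterization of convexity via monotonicity of the subdifferential. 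Your remark that the alternative route through Facts~\ref{cyclic} and~\ref{maxcyc} still requires this convexity is accurate, since the cyclic inequality in Fact~\ref{cyclic} does not follow from cyclic monotonicity of $\partial h^*$ alone. With that one reference supplied, the proof is complete.
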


\section{Epigraphical limits of quadratic functions on $\R$}\label{sec:epi}

One of the main objectives of this paper is to present epiconvergence properties of generalized linear-quadratic functions and their Moreau envelopes. For the first set of results, we focus on quadratic functions on $\R.$ This serves to show the variety of situations that can arise at the epigraphical limit of a sequence of quadratic functions. Then in the next section, we concentrate on the expansion to $\R^n.$
\begin{thm}\label{exquad}
For all $k\in\N,$ let $a_k,b_k,c_k\in\R$ with $a_k\geq0,$ so that $$F=\{f_k(x)=a_kx^2+b_kx+c_k\}_{k=1}^\infty\subseteq\Gamma_0(\R).$$ Then for $r>0$ we have
\begin{equation}\label{eq:er}e_rf_k(x)=\frac{a_kr}{2a_k+r}x^2+\frac{b_kr}{2a_k+r}x+c_k-\frac{b_k^2}{2(2a_k+r)}.\end{equation}
Moreover, letting $k\rightarrow\infty$ and $f_k\overset{e}\rightarrow f,$ we have the following trichotomy.
\begin{itemize}
\item[(i)] If $f\equiv\infty,$ then $e_rf\equiv\infty.$
\item[(ii)] If $f(x)=-\infty$ for some $x,$ then $e_rf\equiv-\infty.$
\item[(iii)] If $f$ is proper, then $e_rf$ is of the form $arx^2+bx+c$ with $a\geq0.$ This is true even in the case where $a_k\to\infty$ and $f(x)=\iota_{\{b\}}(x)+c.$
\end{itemize}
\end{thm}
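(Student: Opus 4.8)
First I would establish the Moreau-envelope formula \eqref{eq:er}. Since each $f_k(x)=a_kx^2+b_kx+c_k$ with $a_k\geq0$ is convex and differentiable, I would compute $e_rf_k(x)=\inf_y\{a_ky^2+b_ky+c_k+\tfrac{r}{2}(y-x)^2\}$ directly: the inner objective is a strictly convex quadratic in $y$ (its second derivative is $2a_k+r>0$), so the minimizer is found by setting the $y$-derivative to zero, giving $y=\Prox_{f_k}^r(x)=\tfrac{rx-b_k}{2a_k+r}$. Substituting back and collecting terms yields the stated expression; this is a routine computation. (Alternatively one can invoke Lemma \ref{fenchmorlem} with $g(x)=f_k(x)+\tfrac r2\|x\|^2$, but direct minimization is cleaner here.)

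Next, for the trichotomy, I would invoke Fact \ref{fact:convergences}: since $F\subseteq\Gamma_0(\R)$, we are in the convex case, so $\bar r=0$ and $f_k\overset{e}\to f$ together with $e_rf_k\overset{p}\to e_rf$ for every $r>0$; moreover the limit $f$ is automatically proper lsc convex, or else it takes the value $-\infty$ somewhere or is identically $+\infty$ — these three cases are exactly (i), (ii), (iii). For case (i), $f\equiv\infty$ forces $e_rf=\inf_y\{f(y)+\tfrac r2\|y-x\|^2\}\equiv\infty$ by definition. For case (ii), if $f(x_0)=-\infty$ then $f\notin\Gamma_0(\R)$, and since $e_rf(x)\leq f(x_0)+\tfrac r2\|x_0-x\|^2=-\infty$ for all $x$, we get $e_rf\equiv-\infty$; alternatively, $e_rf$ is the pointwise limit of the $e_rf_k$, whose coefficients $\tfrac{a_kr}{2a_k+r}$, $\tfrac{b_kr}{2a_k+r}$ must then diverge appropriately.

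The substantive case is (iii). Here $f$ is proper (hence in $\Gamma_0(\R)$), so by Fact \ref{fact:convergences} $e_rf_k(x)\to e_rf(x)$ pointwise for each $x$. The idea is that $e_rf$ is the pointwise limit of the quadratics in \eqref{eq:er}, so I would argue that pointwise limits of (uniformly, or even just individually) convergent sequences of quadratic polynomials are again polynomials of degree $\leq 2$ with nonnegative leading coefficient. Concretely, set $\alpha_k=\tfrac{a_kr}{2a_k+r}$, $\beta_k=\tfrac{b_kr}{2a_k+r}$, $\gamma_k=c_k-\tfrac{b_k^2}{2(2a_k+r)}$; convergence of $\alpha_kx^2+\beta_kx+\gamma_k$ at three distinct points $x$ (say $0,1,-1$) forces $(\alpha_k,\beta_k,\gamma_k)$ to converge to some $(\alpha,\beta,\gamma)$, whence $e_rf(x)=\alpha x^2+\beta x+\gamma$; since $\alpha_k=\tfrac{a_kr}{2a_k+r}\geq0$ for all $k$, the limit satisfies $\alpha\geq0$, and writing $\alpha=ar$ with $a=\alpha/r\geq0$ gives the claimed form $arx^2+bx+c$. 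The main obstacle — and the reason the theorem singles it out — is the degenerate subcase $a_k\to\infty$: then $\alpha_k\to r/2$, $\beta_k\to 0$ if $b_k/a_k\to0$, and one must check that the limit function $f$ is $\iota_{\{b\}}+c$ for the appropriate $b$. I would handle this by noting $\Prox_{f_k}^r(x)=\tfrac{rx-b_k}{2a_k+r}\to 0$ (or to a constant after the shift) and matching this against the proximal mapping of $\iota_{\{b\}}+c$, which is the constant map $x\mapsto b$; using $\nabla e_rf(x)=r[x-\Prox_f^r(x)]$ from Proposition \ref{nablaprop}(ii) to identify $f$ from its envelope. The bookkeeping of which of $b_k$, $a_k$, $c_k$ diverge is where care is needed, but no deep idea beyond the pointwise-limit-of-quadratics observation is required.
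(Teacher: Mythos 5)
Your proposal is correct, and it follows the paper's overall structure: the same direct minimization to obtain \eqref{eq:er}, the same definitional treatment of the improper cases (i) and (ii), and the same appeal to Fact \ref{fact:convergences} to reduce case (iii) to pointwise convergence of $e_rf_k$ to $e_rf$. The one genuine difference is how you extract convergence of the three coefficients from pointwise convergence of the quadratics. The paper evaluates at $x=0$ to get the constant term, then invokes Fact \ref{fact:nablaconvex} (Rockafellar's theorem on convergence of gradients of convergent convex functions) and evaluates $\nabla e_rf_k$ at $x=0$ and $x=1$ to get the linear and quadratic coefficients. You instead evaluate the functions themselves at three points ($0$, $1$, $-1$) and invert the resulting (Vandermonde) linear system; this is more elementary, avoids the gradient-convergence fact entirely, and uses only the finiteness of $e_rf$ on $\R$, which holds since $f$ is proper convex lsc. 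Both routes then conclude $\alpha\geq0$ from $\alpha_k=\tfrac{a_kr}{2a_k+r}\geq0$. Your closing discussion of the degenerate subcase $a_k\to\infty$ and the identification of $f$ as $\iota_{\{b\}}+c$ via proximal mappings is not actually needed: the theorem only asserts the form of $e_rf$, and the three-point (or gradient) argument already covers that subcase uniformly, which is why the paper does not treat it separately.
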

\begin{proof}
The Moreau envelope is not defined for improper functions such as those of parts (i) and (ii), but if we consider the same definition valid for improper functions, it is clear that in part (i) we have $e_rf\equiv\infty$ and in part (ii) we have $e_rf\equiv-\infty.$ For part (iii), we want to consider the Moreau envelope at the limit of the sequence
\begin{align*}
e_rf_k(x)&=\inf\limits_{y\in\R}\left\{f_k(y)+\frac{r}{2}(y-x)^2\right\}\\
&=\inf\limits_{y\in\R}\left\{\left(a_k+\frac{r}{2}\right)y^2+(b_k-rx)y+c_k+\frac{r}{2}x^2\right\}.
\end{align*}
The infimand above is a strictly convex quadratic function, so its minimum can be found by setting the derivative equal to zero and finding critical points. This yields the minimizer $y=\frac{rx-b_k}{2a_k+r},$ which gives
\begin{align*}
e_rf_k(x)&=\left(a_k+\frac{r}{2}\right)\frac{(rx-b_k)^2}{(2a_k+r)^2}+(b_k-rx)\frac{rx-b_k}{2a_k+r}+c_k+\frac{r}{2}x^2\\
&=\frac{a_kr}{2a_k+r}x^2+\frac{b_kr}{2a_k+r}x+c_k-\frac{b_k^2}{2(2a_k+r)}.
\end{align*}
As expected, $e_rf_k(x)\in\Gamma_0(\R)$ for all $k,$ since the quadratic coefficient is nonnegative. Now consider the sequence $f_k\overset{e}\rightarrow f.$ By Fact \ref{fact:convergences}, we need only consider the pointwise convergence of the sequence $\{e_rf_k\}_{k\in\N}.$
Since $e_rf(x)$ is finite for all $x,$ evaluating \eqref{exquad} at $x=0$ and taking the limit as $k\to\infty$ gives us that the constant coefficient $c_k-b_k^2/[2(2a_k+r)]$ converges to some $c\in\R.$ We know that $e_rf_k$ is differentiable for all $k$ by Proposition \ref{nablaprop}, so $\nabla e_rf_k\to\nabla e_rf$ by Fact \ref{fact:nablaconvex}.  Thus, differentiating \eqref{exquad} and evaluating at $x=0,$ we take the limit to find that the linear coefficient $b_kr/(2a_k+r)$ also converges, to some $b\in\R.$ Finally, evaluating the same derivative at $x=1$ and taking the limit, we have that the coefficient $a_kr/(2a_k+r)$ (which is nonnegative for all $k$) converges to $ar$ for some $a\geq0.$
\end{proof}
Theorem \ref{exquad} leads one to ask which convex functions have quadratic functions as their Moreau envelopes. This question is answered by Proposition \ref{quadprop} below.
\begin{prop}\label{quadprop}
On $\R,$ a convex quadratic function $f:\Gamma_0(\R)\rightarrow\overline{\R},$ $f(x)=\alpha x^2+\beta x+\gamma,$ $\alpha\geq0$ is a Moreau envelope of some convex function $g$ where $g$ is either a quadratic function $g(x)=ax^2+bx+c,$ $a\geq0,$ or an indicator function $g(x)=\iota_{\{b\}}(x)+c.$ Specifically, there exists prox-parameter $r>0$ such that the following hold.
\begin{itemize}
\item[(i)] If $0\leq\alpha<r/2,$ then $g(x)=ax^2+bx+c,$ where
$$a=\frac{\alpha r}{r-2\alpha},~~b=\frac{\beta r}{r-2\alpha},~~c=\gamma+\frac{\beta^2}{2(r-2\alpha)}.$$
\item[(ii)] If $\alpha=r/2,$ then $g(x)=\iota_{\{b\}}(x)+c,$ where
$$b=-\frac{\beta}{r},~~c=\gamma-\frac{\beta^2}{2r}.$$
\item[(iii)]If $\alpha>r/2,$ then $\nexists g\in\Gamma_0(\R)$ such that $f=e_rg.$
\end{itemize}
\end{prop}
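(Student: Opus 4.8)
The plan is to route the whole question through the Fenchel conjugate, for which Proposition \ref{calcenv}(vi) is tailor-made, and thereby reduce the trichotomy to reading off the sign of a single coefficient. The starting observation is the equivalence: for $g\in\Gamma_0(\R)$ one has $f=e_rg$ if and only if $h:=f^*-q/r$ belongs to $\Gamma_0(\R)$, in which case necessarily $g=h^*$. Indeed, if $f=e_rg$ then $f^*=(e_rg)^*=g^*+q/r$ by Proposition \ref{calcenv}(vi), so $g^*=h$ and $h\in\Gamma_0(\R)$ since $g\in\Gamma_0(\R)$; conversely, if $h\in\Gamma_0(\R)$, set $g:=h^*\in\Gamma_0(\R)$, so that $(e_rg)^*=g^*+q/r=h^{**}+q/r=h+q/r=f^*$ and hence $e_rg=f^{**}=f$ (recall $f\in\Gamma_0(\R)$). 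So the entire proposition is governed by the single function $h=f^*-q/r$.

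Next I would compute $h$ explicitly for $f(x)=\alpha x^2+\beta x+\gamma$. For $\alpha>0$ one has $f^*(y)=\frac1{4\alpha}(y-\beta)^2-\gamma$, whence $h$ is a quadratic with leading coefficient $\frac1{4\alpha}-\frac1{2r}=\frac{r-2\alpha}{4\alpha r}$; for $\alpha=0$, $f^*=\iota_{\{\beta\}}-\gamma$, so $h=\iota_{\{\beta\}}-\gamma-\beta^2/(2r)$. The three cases are then distinguished by the sign of $r-2\alpha$. If $0\le\alpha<r/2$, then $h$ is a strictly convex quadratic (or, when $\alpha=0$, a shifted indicator), hence $h\in\Gamma_0(\R)$; taking $g=h^*$ — the conjugate of a strictly convex quadratic is again one — and matching coefficients via the standard conjugacy formula reproduces exactly the $a,b,c$ of (i). If $\alpha=r/2$, the leading coefficient of $h$ vanishes, so $h$ is affine, $h\in\Gamma_0(\R)$, and $g=h^*$ is a shifted indicator; matching constants gives the $b,c$ of (ii). If $\alpha>r/2$, the leading coefficient of $h$ is negative, so $h$ is a strictly concave quadratic and in particular not convex; thus $h\notin\Gamma_0(\R)$, and by the equivalence above no $g\in\Gamma_0(\R)$ can satisfy $f=e_rg$, which is (iii).

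I expect the only point needing care — the main obstacle — to be the non-existence claim (iii): one must be confident that the obstruction (non-convexity of $f^*-q/r$) is genuinely decisive, rather than an artefact of the chosen representation. The equivalence of the first paragraph settles this, but if one prefers to avoid conjugates there is an alternative in the paper's later spirit of nonexpansiveness: by Proposition \ref{nablaprop}(ii) together with Facts \ref{rockwets12.14} and \ref{rockwets12.12}, $\tfrac1r\nabla(e_rg)=\Id-\Prox_g^r=\bigl(\Id+(\tfrac1r\partial g)^{-1}\bigr)^{-1}$ is nonexpansive on $\R$, while $\tfrac1r\nabla f(x)=\tfrac{2\alpha}{r}x+\tfrac{\beta}{r}$ has Lipschitz constant $\tfrac{2\alpha}{r}>1$ whenever $\alpha>r/2$, a contradiction. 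Finally, cases (i) and (ii) can also be proved by brute force, plugging the asserted $g$ into the envelope formula \eqref{eq:er} of Theorem \ref{exquad} (quadratic case) or directly into Definition \ref{df:Moreau} (indicator case) and simplifying using $2a+r=r^2/(r-2\alpha)$; I would keep this computational route as a backup should the conjugate bookkeeping get unwieldy.
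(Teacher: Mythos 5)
Your argument is correct, and it reaches the same formulas, but it is organized around a different pivot than the paper's proof. The paper proceeds ``forward'': for (i) and (ii) it posits the form of $g$ (quadratic, respectively indicator), computes $e_rg$ explicitly via the envelope formula of Theorem \ref{exquad} (respectively directly from Definition \ref{df:Moreau}), and matches coefficients; for (iii) it argues by contradiction that $\nabla e_rg=r(\Id-J_{\partial g/r})$ is always $r$-Lipschitz while $\nabla f=2\alpha(\cdot)+\beta$ is not when $\alpha>r/2$ --- exactly the nonexpansiveness route you sketch as your fallback. Your proof instead runs everything ``backward'' through the single equivalence $f=e_rg$ for some $g\in\Gamma_0(\R)$ $\iff$ $h:=f^*-q/r\in\Gamma_0(\R)$, which follows from Proposition \ref{calcenv}(vi) plus biconjugation (using that $e_rg\in\Gamma_0(\R)$ by Proposition \ref{nablaprop} and $f=f^{**}$); the trichotomy then reduces to the sign of the leading coefficient $\tfrac{r-2\alpha}{4\alpha r}$ of $h$, and I have checked that conjugating $h$ reproduces the stated $a,b,c$ in (i) and $b,c$ in (ii), including the $\alpha=0$ subcase where $f^*=\iota_{\{\beta\}}-\gamma$. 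What your route buys is uniformity --- one criterion decides existence and non-existence, and $g=h^*$ is derived rather than guessed --- and it is in effect the one-dimensional instance of the duality the paper only deploys later in Theorem \ref{genquadthm}. What the paper's route buys is that (i) and (ii) reuse the already-computed envelope formula \eqref{eq:er} with no conjugate bookkeeping, and that the Lipschitz argument for (iii) previews Proposition \ref{1lip}. Both are complete; no gap.
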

\begin{proof}
We need to show the form of $g$ such that $f(x)=e_rg(x)~\forall x\in\R$ for any choice of $\alpha\geq0,$ $\beta,\gamma\in\R.$ By Theorem \ref{exquad}, we have that
$$e_rg(x)=\frac{ar}{2a+r}x^2+\frac{br}{2a+r}x+c-\frac{b^2}{2(2a+r)}.$$
We equate the coefficients of $f$ accordingly:
\begin{equation}\label{three}
\alpha=\frac{ar}{2a+r},~~\beta=\frac{br}{2a+r},~~\gamma=c-\frac{b^2}{2(2a+r)}.
\end{equation}
Solving the first of these expressions for $a,$ we find $a=\alpha r/(r-2\alpha).$ Notice that $\alpha=r/2$ is a point of interest.\\

\noindent(i) If $\alpha\in[0,r/2),$ there is a one-to-one correspondence with $a\in[0,\infty).$ Then $b$ and $c$ are found by solving the equations in \eqref{three}.\\

\noindent(ii) If $\alpha=r/2,$ this corresponds to $g(x)=\iota_{\{b\}}(x)+c:$
\begin{align*}
g(x)&=\begin{cases}c,&x=b,\\\infty,&x\neq b,\end{cases}\\
e_rg(x)&=\inf\limits_y\left\{g(y)+\frac{r}{2}(y-x)^2\right\},\\
&=g(b)+\frac{r}{2}(b-x)^2,\\
&=\frac{r}{2}x^2-brx+\frac{r}{2}b^2+c.
\end{align*}
Equating $\beta=-br$ and $\gamma=rb^2/2+c,$ we find that $b=-\beta/r$ and $c=\gamma-\beta^2/(2r).$ Then $f(x)=e_rg(x)$ where $g(x)=\iota_{\{b\}}(x)+c.$\\

\noindent(iii) Let $\alpha>r/2.$ Suppose that $\exists g\in\Gamma_0(\R)$ such that $f=e_rg.$ By Proposition \ref{nablaprop} and Fact \ref{resolventfact}, we have
$$\nabla e_rg(x)=r(\Id-J_{\partial g/r}).$$Since $(\Id-J_{\partial g/r})=J_{(\partial g/r)^{-1}}$ is nonexpansive, $\nabla e_rg$ is $r$-Lipschitz (see also Proposition \ref{1lip}). On the other hand, we have
$$\nabla e_rg(x)=\nabla f(x)=2\alpha x+\beta,$$which is $L$-Lipschitz only if $L\geq2\alpha.$ Hence, $r\geq2\alpha,$ which contradicts the condition that $\alpha>r/2.$ Therefore, there does not exist $g\in\Gamma_0(\R)$ such that $f=e_rg.$
\end{proof}
There are three possible epigraphical limits for the sequence defined in Theorem \ref{exquad} (see Figure \ref{fig:epi}). The first is $\epi(bx+c),$ the case where $a_k\rightarrow0.$ The second is $\epi(ax^2+bx+c),$ the case where $a_k\rightarrow a>0.$ The third is $\epi(\iota_{\{b\}}(x)+c),$ the case where $a_k\to\infty.$
\begin{figure}[H]
\begin{center}\includegraphics[scale=0.2]{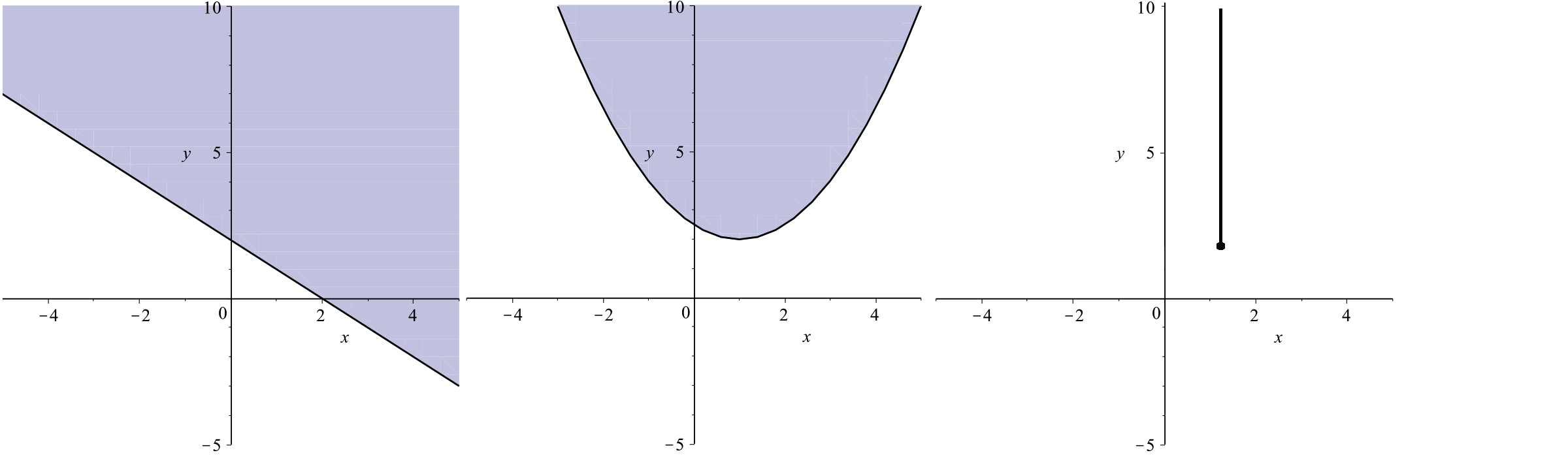}\end{center}
\caption{The three possible general forms of the epigraph of $f(x).$}\label{fig:epi}
\end{figure}
We present three examples here, to illustrate the three possibilities. In all three examples, we set $r=1.$
\begin{ex}
Define $f_k(x)=\left(1+\frac{1}{k}\right)x^2+\left(2+\frac{1}{k}\right)x+\left(1+\frac{1}{k}\right).$ Then $$e_1f_k(x)=\frac{k+1}{3k+2}x^2+\frac{2k+1}{3k+2}x+\frac{2k^2+6k+3}{k(6k+4)}.$$ Letting $k\rightarrow\infty,$ we have $f_k\overset{e}\to f$ with
\begin{align*}
f(x)&=(x+1)^2,\mbox{ and}\\
e_1f(x)&=\frac{1}{3}(x+1)^2.
\end{align*}\end{ex}
Figure \ref{mor1} shows the behaviour of the graphs as a function of $k.$
\begin{figure}[H]
\begin{center}\includegraphics[scale=0.4]{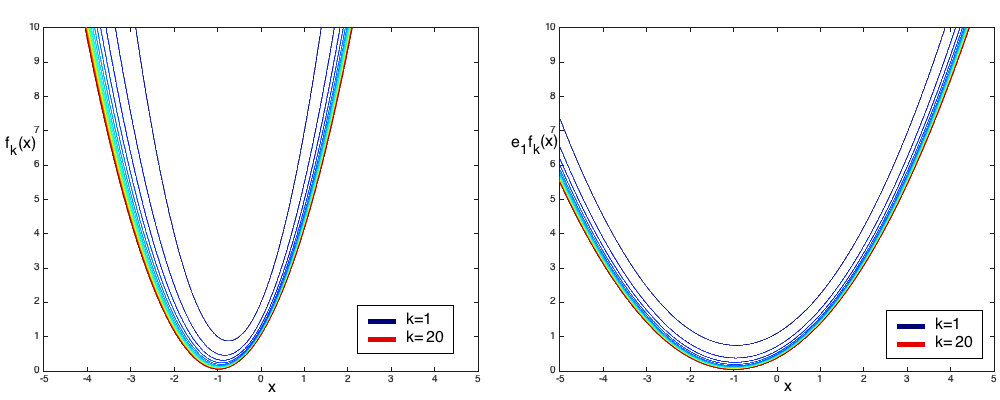}\end{center}
\caption{Left: $f_k(x).$ Right: $e_1f_k(x).$}\label{mor1}
\end{figure}
\begin{ex}Define $g_k(x)=\frac{1}{k}x^2+\left(1+\frac{1}{k}\right)x+\frac{1}{k}.$ Then $$e_1g_k(x)=\frac{1}{k+2}x^2+\frac{k+1}{k+2}x+\frac{-k^2+3}{2k(k+2)}.$$ Letting $k\rightarrow\infty,$ we have $g_k\overset{e}\to g$ with
\begin{align*}
g(x)&=x,\mbox{ and}\\
e_1g(x)&=x-\frac{1}{2}.
\end{align*}\end{ex}
Figure \ref{mor2} shows the behaviour of the graphs as a function of $k.$
\begin{figure}[H]
\begin{center}\includegraphics[scale=0.4]{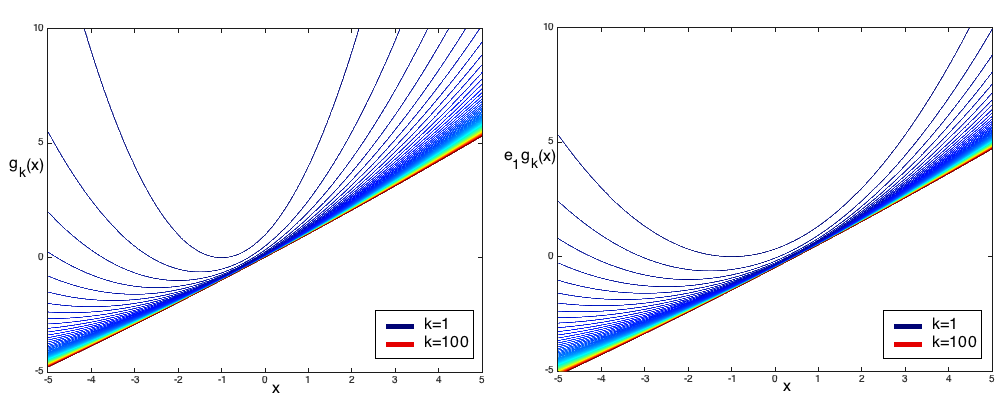}\end{center}
\caption{Left: $g_k(x).$ Right: $e_1g_k(x).$}\label{mor2}
\end{figure}
\begin{ex}Define $h_k(x)=kx^2+\frac{1}{k}x+\frac{1}{k}.$ Then $$e_1h_k(x)=\frac{k}{2k+1}x^2+\frac{1}{k(2k+1)}x+\frac{4k^2+2k-1}{2k^2(2k+1)}.$$ Letting $k\rightarrow\infty,$ we have $h_k\overset{e}\to h$ with
\begin{align*}
h(x)&=\iota_{\{0\}}(x),\mbox{ and}\\
e_1h(x)&=\frac{1}{2}x^2.
\end{align*}\end{ex}
Figure \ref{mor3} shows the behaviour of the graphs as a function of $k.$
\begin{figure}[H]
\begin{center}\includegraphics[scale=0.4]{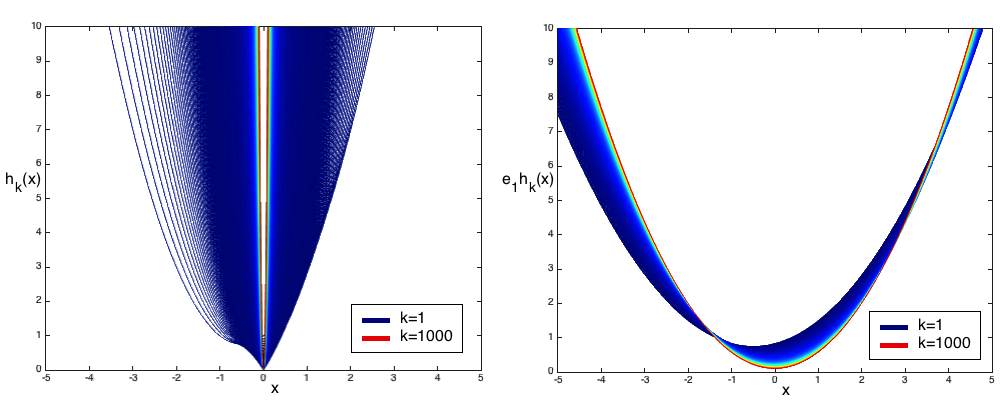}\end{center}
\caption{Left: $h_k(x).$ Right: $e_1h_k(x).$}\label{mor3}
\end{figure}

\section{Generalized linear-quadratic functions on $\R^n$}\label{sec:epi2}

Now we move on to finite-dimensional space. One natural goal that arises is that of unifying $f(x)=\frac{1}{2}\langle x,Ax\rangle+\langle b,x\rangle+c$ and $f(x)=\iota_{\{b\}}(x)+c$ in the more general setting of $\R^n.$ To do so, we first need to establish several properties of monotone linear relations and generalized linear-quadratic functions.

\subsection{Linear relations and generalized linear-quadratic functions}

\begin{df}
An operator $A:\R^n\rightrightarrows\R^n$ is a \emph{linear relation} if the graph of $A$ is a linear subspace of $\R^{n\times n}.$
\end{df}
\begin{ex}
On $\R,$ `monotone' is equivalent to `increasing'. So a monotone linear relation $A:\R\rightrightarrows\R$ must be a straight line with nonnegative slope, and since it is a subspace it must pass through the origin. There are three possibilities then: the $x$-axis, a line through the origin with positive slope, and the $y$-axis (see \cite[Theorem 12.15]{rockwets} for details):
\begin{itemize}
\item[(i)]$\gra A=\R\times\{0\}\Rightarrow A\equiv0,$
\item[(ii)]$\gra A=\Span\{(a,b)\},~a,b\in\R\setminus\{0\}\Rightarrow A(x)=kx,$ $k>0,$
\item[(iii)]$\gra A=\{0\}\times\R\Rightarrow A=N_{\{0\}}.$
\end{itemize}
\end{ex}
\begin{figure}[H]
\includegraphics[scale=0.35]{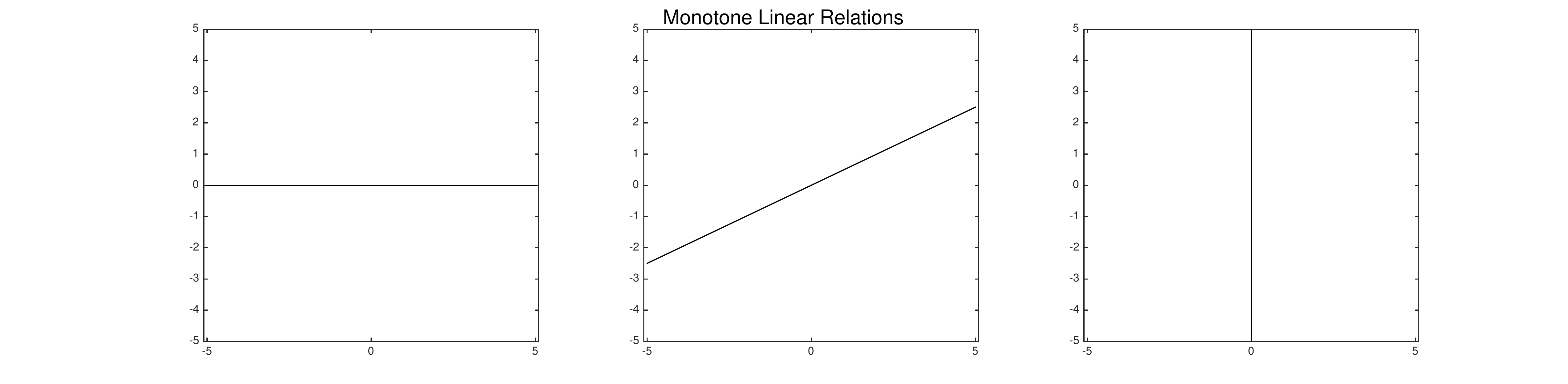}
\caption{The three possible forms of a monotone linear relation on $\R.$}
\end{figure}
\begin{df}
A \emph{generalized linear-quadratic function} $p:\R^n\rightarrow\OR$ is defined
$$p(x)=\frac{1}{2}\langle x-a,A(x-a)\rangle+\langle b,x\rangle+c\quad\forall x\in\R^n,$$where $A$ is a linear relation, $a,b\in\R^n,$ $c\in\R.$
\end{df}
\noindent Our first question is: why is the function $p$ well-defined?
\begin{ex}
Define$$A(x_1,x_2)=\{t(1,1):t\in\R\}\subseteq\R^2,~\forall(x_1,x_2)\in\R^2.$$ Then $A$ is a linear relation but not monotone, and $\langle x,Ax\rangle$ is not single-valued.
\end{ex}
\begin{proof}
It is elementary to show that $A$ is a linear relation. Let $x_1+x_2\neq0.$ Then
\begin{align*}
\langle(x_1,x_2),A(x_1,x_2)\rangle&=\{\langle(x_1,x_2),t(1,1)\rangle:t\in\R\}\\
&=\{t(x_1+x_2):t\in\R\}=\R.
\end{align*}Therefore, $\langle x,Ax\rangle$ is not single-valued. Observe that $A$ is not monotone. Indeed, set $t>0,$ and choose $x_1,x_2$
such that $x_1+x_2<0$ and $t(1,1)\in A(x_1,x_2).$ Note that $(0,0)\in A(0,0).$ Then
\begin{align*}
\langle(x_1,x_2)-(0,0),A[(x_1,x_2)-(0,0)]\rangle&=\langle(x_1,x_2),t(1,1)\rangle\\
&=t(x_1+x_2)<0.
\end{align*}
\end{proof}
\noindent The following fact says that when $A$ is a monotone linear relation, $p$ is well-defined.
\begin{fact}\emph{\cite[Proposition 3.2.1]{liangjin}}\label{liangprop}
Let $A:\R^n\rightrightarrows\R^n$ be a linear relation. Then $A$ is monotone if and only if $\langle x,Ax\rangle\geq0$ and $\langle x,Ax\rangle$ is single-valued for all $x\in\dom A.$
\end{fact}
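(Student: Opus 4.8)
The plan is to prove the forward and reverse implications of the biconditional separately, relying on the standard characterization of monotone subspaces. First I would fix notation: write $G=\gra A\subseteq\R^n\times\R^n$, which is a linear subspace by hypothesis. The key observation I would start from is that for a linear relation, the value $Ax$ (when $x\in\dom A$) is an affine translate of the fixed subspace $A0 = \{x^*:(0,x^*)\in G\}$; indeed, if $(x,x^*),(x,y^*)\in G$ then $(0,x^*-y^*)\in G$. This immediately reduces the single-valuedness of $\langle x,Ax\rangle$ to the statement that $\langle x, A0\rangle = \{0\}$, i.e., that $A0 \perp \dom A$.

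For the forward direction, assume $A$ is monotone. Fix $x\in\dom A$ and pick $x^*\in Ax$. Applying the monotonicity inequality to the pairs $(x,x^*)$ and $(0,0)\in G$ gives $\langle x^*-0, x-0\rangle\geq 0$, i.e., $\langle x,Ax\rangle$ (any selection) is nonnegative. For single-valuedness, suppose $(x,x^*),(x,y^*)\in G$; then $(0,x^*-y^*)\in G$, and for any $t\in\R$ the pair $(tx, x^* + t(x^*-y^*)$-type combinations lie in $G$ by linearity — more cleanly, apply monotonicity to $(x,x^*)$ and $(x,y^*)$ directly: $\langle x^*-y^*, x-x\rangle = 0 \geq 0$ gives nothing, so instead I would use the subspace structure. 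Take $(0,u)\in G$ with $u = x^*-y^*$; then $(x, x^*)\in G$ and $(x + \lambda\cdot 0, x^* + \lambda u) = $ well, $(0,\lambda u)\in G$ too, and $(x,\,x^*+\lambda u)\in G$ for all $\lambda\in\R$. Monotonicity against $(0,0)$ yields $\langle x^* + \lambda u, x\rangle \geq 0$ for all $\lambda\in\R$, which forces $\langle u,x\rangle = 0$. Hence $\langle x,x^*\rangle = \langle x,y^*\rangle$, establishing single-valuedness. The nonnegativity then transfers to the (now well-defined) single value.

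For the reverse direction, assume $\langle x,Ax\rangle\geq 0$ and is single-valued for all $x\in\dom A$. To check monotonicity, take $(x,x^*),(y,y^*)\in G$. By linearity of the subspace, $(x-y,\,x^*-y^*)\in G$, so $x-y\in\dom A$ and $x^*-y^*\in A(x-y)$. The hypothesis applied at the point $x-y$ gives $\langle x-y,\,x^*-y^*\rangle \geq 0$, which is exactly the monotonicity inequality. This direction is essentially a one-line consequence of the subspace structure, which is why the linear-relation hypothesis is doing all the work.

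I expect the main subtlety to be the single-valuedness argument in the forward direction — one must be careful to exploit that $G$ is a genuine \emph{subspace} (not merely closed under addition on the diagonal) in order to produce the family $(x, x^* + \lambda u)$ and push $\lambda\to\pm\infty$; everything else is routine. Since this is cited as \cite[Proposition 3.2.1]{liangjin}, I would also remark that no completeness or closedness of $G$ is needed, only linearity, which keeps the statement valid in the algebraic (finite-dimensional) setting used throughout the paper.
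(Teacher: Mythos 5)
Your argument is correct and complete. Note, however, that the paper itself offers no proof of this statement: it is recorded as a \emph{Fact} with a citation to \cite[Proposition 3.2.1]{liangjin}, and the preliminaries explicitly defer all such proofs to the cited sources, so there is nothing in the paper to compare your route against. On its own merits your proof is sound: the reverse direction is indeed the one-line consequence of the subspace structure of $\gra A$ (namely $(x-y,x^*-y^*)\in\gra A$), and the forward direction correctly isolates the only nontrivial point, that $A0\perp\dom A$, which you obtain by applying monotonicity of the pairs $(x,x^*+\lambda u)$ and $(0,0)$ and letting $\lambda\to\pm\infty$. The only blemish is the abandoned false start in the middle of the forward direction (the ``$(tx,x^*+t(x^*-y^*))$-type combinations'' sentence, which also has an unbalanced parenthesis); in a final write-up you should delete that and keep only the clean $\lambda$-argument that follows it. Your closing remark that only linearity of the graph, not closedness, is used is also accurate.
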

\noindent Our next question is: why do we write $\langle x-a,A(x-a)\rangle$?
\begin{ex}
Consider the example on $\R$ of $A=N_{\{0\}}:$$$N_{\{0\}}(1-1)=\R\neq N_{\{0\}}(1)+N_{\{0\}}(-1)=\varnothing+\varnothing.$$
\end{ex}
\begin{fact}\emph{\cite[Proposition 3.1.3]{liangjin}}\label{liangjinfactlinear}
The operator $A$ is a linear relation if $~\forall\alpha,\beta\in\R$ and $~\forall x,y\in\R^n$ we have
$$A(\alpha x+\beta y)=\alpha Ax+\beta Ay+A0.$$
\end{fact}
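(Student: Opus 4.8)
The plan is to prove the statement by showing directly that $\gra A$ is a linear subspace of $\R^n\times\R^n$, which is exactly the definition of a linear relation. Since $\R^n\times\R^n$ is finite-dimensional, it suffices to verify that $\gra A$ contains $(0,0)$ and is closed under arbitrary linear combinations $(x,x^*),(y,y^*)\mapsto\alpha(x,x^*)+\beta(y,y^*)$. Throughout I would keep in mind that $A$ is a genuine operator (so $\gra A\neq\varnothing$) and the set-arithmetic conventions $\alpha S=\{\alpha s:s\in S\}$, $0\cdot S=\{0\}$ when $S\neq\varnothing$, and $\alpha\cdot\varnothing=\varnothing$.

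The closure step is almost immediate from the hypothesis: if $x^*\in Ax$ and $y^*\in Ay$, then $A(\alpha x+\beta y)=\alpha Ax+\beta Ay+A0\ni\alpha x^*+\beta y^*+0=\alpha x^*+\beta y^*$, so $\alpha(x,x^*)+\beta(y,y^*)=(\alpha x+\beta y,\alpha x^*+\beta y^*)\in\gra A$ — provided we already know $0\in A0$. Thus the whole proof funnels into establishing $(0,0)\in\gra A$, equivalently $0\in A0$, purely from the identity, and I would do this in two moves. First, picking any $(x,x^*)\in\gra A$ and applying the identity with $\alpha=1$, $\beta=0$, $y=0$ gives $Ax=Ax+A0$ (the $0\cdot A0$ term being $\{0\}$ if $A0\neq\varnothing$ and $\varnothing$ otherwise); since $x^*\in Ax$, the case $A0=\varnothing$ is impossible, so $A0\neq\varnothing$. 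Second, applying the identity with $x=y=0$ and $\beta=0$ gives $A0=\alpha A0+A0$ for every $\alpha\in\R$; taking $\alpha=-1$ and any fixed $w\in A0$ yields $0=-w+w\in-A0+A0=A0$. Hence $(0,0)\in\gra A$, and together with the closure computation this shows $\gra A$ is a subspace, i.e., $A$ is a linear relation.

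The step I expect to require the most care is exactly this extraction of $0\in A0$: because the conventions make the identity vacuously true for the empty relation (every term is $\varnothing$), one must explicitly invoke nonemptiness of $\gra A$, and one must obtain $A0\neq\varnothing$ and then $0\in A0$ in the right order before using closure under linear combinations — otherwise the reasoning is circular. The rest is routine bookkeeping with the definitions of $Ax$, $A0$, and $\gra A$. I would also record the converse, since it is used elsewhere in the paper: a linear relation satisfies $A(\alpha x+\beta y)=\alpha Ax+\beta Ay+A0$ for all scalars and vectors, which follows by reversing the above computations together with the observation that, for $x\in\dom A$, $Ax$ is the coset $x^*+A0$ of the subspace $A0$ for any $x^*\in Ax$.
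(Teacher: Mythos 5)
The paper states this result as an imported Fact, citing \cite[Proposition 3.1.3]{liangjin}, and supplies no proof of its own, so there is no internal argument to compare against. Your proof is correct and is the standard direct verification of the subspace property of $\gra A$: the only delicate point is extracting $0\in A0$ before invoking closure under linear combinations, and you handle this in the right order, correctly flagging that the identity is vacuous for the empty relation so that nonemptiness of $\gra A$ must be assumed (the finite-dimensionality remark is unnecessary but harmless).
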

\begin{prop}
Assume that $A$ is a monotone linear relation. If both $x,a\in\dom A$ or $\dom A=\R^n$, then
$$\langle x-a,A(x-a)\rangle=\langle x,Ax\rangle-\langle x,Aa\rangle-\langle a,Ax\rangle+\langle a,Aa\rangle.$$
\end{prop}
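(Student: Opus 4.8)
The plan is to turn this set-valued identity into an ordinary bilinear expansion by fixing representatives $x^{*}\in Ax$ and $a^{*}\in Aa$ and then showing that the only remaining ambiguity, carried by $A0$, contributes nothing to any of the pairings in sight.

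First I would record the structural preliminaries. Since $\gra A$ is a subspace, $\dom A$ is a subspace of $\R^{n}$, so the hypothesis ($x,a\in\dom A$, or $\dom A=\R^{n}$) guarantees $x-a\in\dom A$ and $Ax,Aa\neq\varnothing$. Using again that $\gra A$ is a subspace, for $z\in\dom A$ the set $Az$ is a translate of $A0$ (if $u,v\in Az$ then $(0,u-v)\in\gra A$); in particular $(x-a,x^{*}-a^{*})\in\gra A$ gives $A(x-a)=x^{*}-a^{*}+A0$ (compare the linear-relation calculus of Fact \ref{liangjinfactlinear}). The key lemma I would then extract is that $\langle y,A0\rangle=\{0\}$ for every $y\in\dom A$: by Fact \ref{liangprop}, $\langle y,Ay\rangle$ is single-valued, and writing $\langle y,Ay\rangle=\langle y,y^{*}\rangle+\langle y,A0\rangle$ for $y^{*}\in Ay$, with $\langle y,A0\rangle$ a linear subspace of $\R$, single-valuedness forces $\langle y,A0\rangle=\{0\}$. (Equivalently, this follows directly from monotonicity by testing $(0,tz)\in\gra A$ against $(y,y^{*})\in\gra A$ and letting $t$ range over $\R$.)

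With these in hand the computation is routine. Since $x-a\in\dom A$,
$$\langle x-a,A(x-a)\rangle=\langle x-a,x^{*}-a^{*}\rangle+\langle x-a,A0\rangle=\langle x-a,x^{*}-a^{*}\rangle,$$
and expanding bilinearly gives $\langle x,x^{*}\rangle-\langle x,a^{*}\rangle-\langle a,x^{*}\rangle+\langle a,a^{*}\rangle$. Finally I would match each term with its set-valued counterpart: $\langle x,x^{*}\rangle=\langle x,Ax\rangle$ and $\langle a,a^{*}\rangle=\langle a,Aa\rangle$ by Fact \ref{liangprop}, while $\langle x,a^{*}\rangle=\langle x,Aa\rangle$ and $\langle a,x^{*}\rangle=\langle a,Ax\rangle$ because, e.g., $\langle x,Aa\rangle=\langle x,a^{*}\rangle+\langle x,A0\rangle=\langle x,a^{*}\rangle$ using $x\in\dom A$; in particular all four terms on the right-hand side are single-valued, so the asserted equality is a genuine equality of real numbers.

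The only real obstacle here is conceptual rather than computational: one must not lose track of the fact that $A$ is set-valued, so $Ax$, $Aa$, and $A(x-a)$ are cosets of $A0$ rather than vectors, and the substance of the statement is the orthogonality $A0\perp\dom A$, which is exactly what makes all the cross-pairings well-defined and what collapses the bookkeeping to the familiar finite-dimensional formula. Once that orthogonality is read off from Fact \ref{liangprop} (or from monotonicity), the remainder is pure bilinearity.
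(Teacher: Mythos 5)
Your proposal is correct and follows essentially the same route as the paper's proof: both arguments rest on the orthogonality $A0\subset(\dom A)^{\perp}$ (which makes all four pairings single-valued) together with the linear-relation calculus $A(x-a)=Ax-Aa+A0$ of Fact \ref{liangjinfactlinear}. Your write-up merely makes explicit the derivation of that orthogonality from Fact \ref{liangprop} and the coset bookkeeping that the paper leaves implicit.
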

\begin{proof} When $A$ is a monotone linear relation $A0\subset \dom A^{\perp}$. When $x,a\in\dom A$,
we have that
$\langle x,Ax\rangle, \langle a,Aa\rangle, \langle x,Aa\rangle$ and $\langle a,Ax\rangle$ are single-valued. It suffices to
apply Fact~\ref{liangjinfactlinear}.
\end{proof}
\begin{df}
The \emph{adjoint} $A^*$ of  a linear relation $A$ is defined in terms of its graph:
\begin{align*}
\gra A^*&=\{(x^{**},x^*)\in\R^{n\times n}:(x^*,-x^{**})\in(\gra A)^\perp\}\\
&=\{(x^{**},x^*)\in\R^{n\times n}:\langle a,x^*\rangle=\langle a^*,x^{**}\rangle~\forall(a,a^*)\in\gra A\}.
\end{align*}
\end{df}
\begin{df}\label{def:symmetric}
An operator $A$ is \emph{symmetric} if $\gra A\subseteq\gra A^*,$ where $A^*$ is the adjoint of $A.$ Equivalently, $A$ is symmetric if $\langle x,y^*\rangle=\langle y,x^*\rangle~\forall (x,x^*),(y,y^*)\in\gra A.$
\end{df}
\begin{ex} The following are maximally monotone symmetric linear relations.
\begin{enumerate}
\item[(i)] A symmetric positive semidefinite matrix $A:\R^n\rightarrow\R^n$, and its set-valued
inverse
$A^{-1}:\R^n\rightarrow\R^n$. This follows from
$$(A^{-1})^*=(A^*)^{-1}=A^{-1}.$$
\item[(ii)] The normal cone operator $N_{L}:\R^n\rightrightarrows\R^n$, where $L\subset\R^n$ is a subspace. This is because
$$\gra N_{L}=L\times L^{\perp},\quad \gra (N_{L})^*=L\times L^{\perp}.$$
\end{enumerate}
\end{ex}
\begin{df}
For a monotone linear relation $A:\R^n\rightrightarrows\R^n,$ we define
\begin{itemize}
\item[(i)] $q_A(x)=\begin{cases}\frac{1}{2}\langle x,Ax\rangle,& \mbox{ if }x\in\dom A,\\
\infty,& \mbox{ if }x\not\in\dom A,\end{cases}$
\item[(ii)] $A_+=\frac{1}{2}(A+A^*).$
\end{itemize}
\end{df}
\begin{rem}\label{normalrem}
The framework of a generalized linear-quadratic function is more convenient. For instance, for $a\in\R^n$ and $c\in\R$ the indicator function $f:\R^n\rightarrow\OR,$
$$f(x)=\iota_{\{a\}}(x)+c=\begin{cases}
c,&\mbox{if }x=a,\\\infty,&\mbox{if }x\neq a,
\end{cases}$$can be expressed as a generalized linear-quadratic function:
$$f(x)=q_{N_{\{0\}}}(x-a)+c,$$
where $$N_{\{0\}}(x)=\begin{cases}\R^n,&\mbox{if }x=0,\\\varnothing,&\mbox{if }x\neq0\end{cases}$$ is a maximally monotone linear relation.\end{rem}
\begin{prop}\label{prop:symormono}
Let $A:\R^n\rightrightarrows\R^n$ be a linear relation. Suppose that either
\begin{itemize}
\item[(i)]$A$ is symmetric, or
\item[(ii)]$A$ is monotone.
\end{itemize}
Then $q_A$ is an extended-real-valued function.
\end{prop}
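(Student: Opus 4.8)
The plan is to first strip the statement down to a single-valuedness question. Off $\dom A$ the function $q_A$ takes the value $\infty$ by definition, which lies in $\OR$; on $\dom A$, $q_A(x)=\tfrac12\langle x,Ax\rangle$, and since $Ax$ is a nonempty subset of $\R^n$, choosing any $x^*\in Ax$ makes $\langle x,x^*\rangle$ a finite real number. Hence the only thing to verify is that $\langle x,Ax\rangle$ does not depend on which element of the set $Ax$ is chosen; once that is known, $q_A$ maps $\R^n$ into $\R\cup\{\infty\}=\OR$ in both cases.

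Case (ii) requires no work: if $A$ is monotone, Fact~\ref{liangprop} asserts directly that $\langle x,Ax\rangle$ is single-valued for every $x\in\dom A$, so $q_A$ is $\OR$-valued.

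For case (i) I would argue from the subspace structure of $\gra A$ together with the symmetry criterion of Definition~\ref{def:symmetric}. Fix $x\in\dom A$ and let $x^*,x^{**}\in Ax$. Since $\gra A$ is a linear subspace of $\R^n\times\R^n$, the difference $(x,x^*)-(x,x^{**})=(0,x^*-x^{**})$ lies in $\gra A$, i.e.\ $x^*-x^{**}\in A0$. Applying symmetry to the pairs $(x,x^*)\in\gra A$ and $(0,x^*-x^{**})\in\gra A$ yields $\langle x,x^*-x^{**}\rangle=\langle 0,x^*\rangle=0$, hence $\langle x,x^*\rangle=\langle x,x^{**}\rangle$. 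Thus $\langle x,Ax\rangle$ is a well-defined real number and $q_A$ is $\OR$-valued.

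I do not expect a genuine obstacle here; the two cases are short, but it is worth noting they rest on different sufficient conditions (neither of symmetry and monotonicity implies the other), so they really must be handled separately. The one structural point to highlight in the symmetric case is that the argument above is exactly the statement that symmetry forces $A0\subseteq(\dom A)^{\perp}$ — the same phenomenon already invoked in the proof of the preceding proposition — and it is precisely this orthogonality that removes the ambiguity in $\langle x,Ax\rangle$.
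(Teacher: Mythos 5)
Your proof is correct and follows essentially the same route as the paper: case (ii) is exactly the paper's appeal to Fact~\ref{liangprop}, and for case (i) the paper simply applies Definition~\ref{def:symmetric} with $y=x$ to two pairs $(x,x^*),(x,y^*)\in\gra A$ to get $\langle x,y^*\rangle=\langle x,x^*\rangle$ in one step. Your detour through the subspace structure of $\gra A$ and the pair $(0,x^*-x^{**})$ is valid but not needed, since the same conclusion falls directly out of the symmetry identity.
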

\begin{proof}
(i) Let $A$ be symmetric. Then by Definition \ref{def:symmetric} with $y=x,$ we have$$\langle x,y^*\rangle=\langle x,x^*\rangle~\forall(x,x^*),(x,y^*)\in\gra A.$$ That is, $q_A(x)=\langle x,Ax\rangle=\langle x,x^*\rangle$ is single-valued for all $x\in\dom A.$\\

\noindent (ii) This is direct from Fact \ref{liangprop}.
\end{proof}

\subsection{Properties and calculus of $q_A$}

The generalized linear-quadratic function $q_A$ is instrumental in establishing our final main result. In this section, we collect a number of properties of $q_A$ under conditions such as maximal monotonicity and symmetry.
\begin{lem}\label{lem:invsym}
Let $A:\R^n\rightrightarrows\R^n$ be symmetric. Then $A^{-1}$ is symmetric.
\end{lem}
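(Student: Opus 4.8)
The plan is to work directly from the graph characterization of the adjoint. Recall that for any linear relation $B$, we have $\gra B^* = \{(u,v) : \langle a, v\rangle = \langle a^*, u\rangle \ \forall (a,a^*)\in\gra B\}$, and that $A$ is symmetric means $\gra A \subseteq \gra A^*$, equivalently $\langle x,y^*\rangle = \langle y,x^*\rangle$ for all $(x,x^*),(y,y^*)\in\gra A$. I want to show the analogous statement for $A^{-1}$, namely $\langle x^*, y^{**}\rangle = \langle y^*, x^{**}\rangle$ for all $(x^*,x^{**}),(y^*,y^{**})\in\gra A^{-1}$.

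First I would unwind the definition of the set-valued inverse: $(x^*,x^{**})\in\gra A^{-1}$ precisely when $(x^{**},x^*)\in\gra A$, and similarly $(y^*,y^{**})\in\gra A^{-1}$ iff $(y^{**},y^*)\in\gra A$. So take arbitrary pairs $(x^{**},x^*),(y^{**},y^*)\in\gra A$. Applying the symmetry of $A$ to these two pairs (with the roles $x\leftrightarrow x^{**}$, $x^*\leftrightarrow x^*$, $y\leftrightarrow y^{**}$, $y^*\leftrightarrow y^*$) gives $\langle x^{**}, y^*\rangle = \langle y^{**}, x^*\rangle$. But this is exactly the symmetry relation for $A^{-1}$ evaluated at the pairs $(x^*,x^{**})$ and $(y^*,y^{**})$, after using the symmetry of the inner product to rewrite $\langle x^{**},y^*\rangle = \langle y^*,x^{**}\rangle$ and $\langle y^{**},x^*\rangle = \langle x^*,y^{**}\rangle$. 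Hence $A^{-1}$ is symmetric.

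Alternatively, and perhaps more cleanly for the writeup, I would invoke the identity $(A^{-1})^* = (A^*)^{-1}$, which follows immediately from the perp-based definition of the adjoint together with the fact that $(\gra A)^\perp$ transforms predictably under the coordinate swap $(u,v)\mapsto(v,u)$; then symmetry $\gra A\subseteq\gra A^*$ gives, upon taking inverses of both graphs, $\gra A^{-1}\subseteq\gra(A^*)^{-1} = \gra(A^{-1})^*$, which is exactly symmetry of $A^{-1}$. This is the same computation packaged differently, and it matches the style already used in the paper (see Example (i) following Definition \ref{def:symmetric}, where $(A^{-1})^* = (A^*)^{-1}$ is used).

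There is no real obstacle here; the only point requiring a moment's care is bookkeeping the coordinate swap correctly — making sure that "inverting the graph" (swapping the two coordinates in each ordered pair) is compatible with the adjoint operation in the sense that $(A^{-1})^* = (A^*)^{-1}$, rather than producing some transpose-with-a-sign artifact. Since the definition of $A^{-1}$ in this paper is the plain set-valued inverse (swap coordinates, no sign change), this works out directly. I would therefore present the short graph-chase as the proof, stating $(A^{-1})^* = (A^*)^{-1}$ explicitly and then concluding $\gra A^{-1}\subseteq\gra(A^{-1})^*$.
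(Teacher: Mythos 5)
Your graph-chase is correct and is essentially the paper's own argument: the paper takes $u\in Ay$, $v\in Ax$ and substitutes into $\langle x,Ay\rangle=\langle Ax,y\rangle$, which is exactly your relabeling of the symmetry condition under the coordinate swap. Your alternative via $(A^{-1})^*=(A^*)^{-1}$ is also valid but is, as you say, the same computation repackaged, so no further comment is needed.
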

\begin{proof}
By definition, $A$ is symmetric if and only if\begin{equation}\label{uva}\langle x,Ay\rangle=\langle Ax,y\rangle\qquad\forall x,y\in\dom A.\end{equation} Let $u\in Ay,$ $v\in Ax.$ Then $u,v\in\ran A=\dom A^{-1},$ and $x\in A^{-1}v,$ $y\in A^{-1}u.$ Substituting into \eqref{uva}, we have
$$\langle A^{-1}v,u\rangle=\langle v,A^{-1}u\rangle\qquad\forall u,v\in\dom A^{-1},$$which is the definition of symmetry of $A^{-1}.$
\end{proof}
\begin{lem}\label{lem:sumsym}
Let $A_1,A_2:\R^n\rightrightarrows\R^n$ be maximally monotone linear relations. Then $A_1+A_2$ is a maximally monotone linear relation. If, in addition, $A_1$ and $A_2$ are symmetric, then $A_1+A_2$ is symmetric.
\end{lem}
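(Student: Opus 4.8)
The plan is to establish the three assertions in order: that $A_1+A_2$ is a linear relation, that it is maximally monotone, and — under the extra hypothesis — that it is symmetric. Throughout I read $(A_1+A_2)x$ as the Minkowski sum $A_1x+A_2x$, so $\dom(A_1+A_2)=\dom A_1\cap\dom A_2$, and $(x,x^*)\in\gra(A_1+A_2)$ precisely when $x^*=x_1^*+x_2^*$ for some $(x,x_1^*)\in\gra A_1$ and $(x,x_2^*)\in\gra A_2$.

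First I would verify the linear-relation property via the criterion of Fact~\ref{liangjinfactlinear}. Since $(A_1+A_2)(\alpha x+\beta y)=A_1(\alpha x+\beta y)+A_2(\alpha x+\beta y)$, applying Fact~\ref{liangjinfactlinear} to $A_1$ and to $A_2$ and adding gives $\alpha A_1x+\beta A_1y+A_10+\alpha A_2x+\beta A_2y+A_20$; regrouping the Minkowski sums (scalar multiplication distributes over $+$, and $+$ is commutative and associative on subsets of $\R^n$) turns this into $\alpha(A_1+A_2)x+\beta(A_1+A_2)y+(A_1+A_2)0$, which is exactly the required identity. Monotonicity is then immediate: for $(x,x_1^*+x_2^*),(y,y_1^*+y_2^*)\in\gra(A_1+A_2)$ the quantity $\langle(x_1^*+x_2^*)-(y_1^*+y_2^*),\,x-y\rangle$ splits as $\langle x_1^*-y_1^*,x-y\rangle+\langle x_2^*-y_2^*,x-y\rangle\ge0$ by monotonicity of $A_1$ and of $A_2$.

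The substantive step is maximality, and I expect it to be the main obstacle, since it is the only point that does not reduce to a one-line manipulation and it relies on a nontrivial input. The key observation is that $\dom A_1$ and $\dom A_2$ are linear subspaces of $\R^n$ (images of the subspaces $\gra A_1,\gra A_2$ under the first-coordinate projection), each containing $0$; hence $\ri\dom A_1\cap\ri\dom A_2=\dom A_1\cap\dom A_2\ni 0$, so the relative-interior constraint qualification of the finite-dimensional sum theorem for maximally monotone operators is automatically met, and $A_1+A_2$ is maximally monotone. An alternative, more self-contained route is a dimension count: a monotone linear relation on $\R^n$ is maximal iff $\dim\gra A=n$, and for such an $A$ one has $A0=(\dom A)^{\perp}$; a short rank computation then yields $\dim\gra(A_1+A_2)=\dim(\dom A_1\cap\dom A_2)+\dim\big((\dom A_1\cap\dom A_2)^{\perp}\big)=n$. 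Either way, maximality follows.

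Finally, for the symmetric case I would argue directly from Definition~\ref{def:symmetric}. Take $(x,x^*),(y,y^*)\in\gra(A_1+A_2)$ and decompose $x^*=x_1^*+x_2^*$, $y^*=y_1^*+y_2^*$ with $(x,x_i^*),(y,y_i^*)\in\gra A_i$; since $x,y\in\dom A_1\cap\dom A_2$ all the pairings below are meaningful. Then
$$\langle x,y^*\rangle=\langle x,y_1^*\rangle+\langle x,y_2^*\rangle=\langle y,x_1^*\rangle+\langle y,x_2^*\rangle=\langle y,x^*\rangle,$$
where the middle equality uses the symmetry of $A_1$ and of $A_2$. Thus $A_1+A_2$ is symmetric, which completes the argument.
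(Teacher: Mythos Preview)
Your argument is correct and follows essentially the same route as the paper's: both obtain maximality from the fact that the domains are linear subspaces (so the standard constraint qualification for the sum of maximally monotone operators is automatic), and both prove symmetry by the identical decomposition $x^*=x_1^*+x_2^*$, $y^*=y_1^*+y_2^*$ and termwise use of the symmetry of $A_1,A_2$. The only cosmetic differences are that the paper verifies the linear-relation property by noting directly that $\gra(A_1+A_2)$ is a linear subspace (rather than via Fact~\ref{liangjinfactlinear}), and it cites a specific sum theorem phrased in terms of $\dom A_1-\dom A_2$ being a closed subspace rather than the $\ri$ condition; your alternative dimension count for maximality is a nice self-contained variant not in the paper.
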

\begin{proof}
Since $\dom A_1$ and $\dom A_2$ are linear subspaces of $\R^n,$ $\dom A_1-\dom A_2$ is a closed subspace. By \cite[Theorem 7.2.2]{liangjin}, $A_1+A_2$ is maximally monotone. Since $\gra A_1$ and $\gra A_2$ are linear subspaces, $\gra(A_1+A_2)$ is a linear subspace. Hence, $A_1+A_2$ is a linear relation. It remains to prove that $A_1+A_2$ is symmetric. Let $(x,x^*),(y,y^*)\in\gra(A_1+A_2)$ be arbitrary. Since $\dom(A_1+A_2)=\dom A_1\cap\dom A_2,$ we have $x,y\in\dom A_1$ and $x,y\in\dom A_2.$ Then there exist $x_1^*,y_1^*\in\ran A_1$ and $x_2^*,y_2^*\in\ran A_2$ such that
\begin{itemize}
\item[(i)]$(x,x_1^*),(y,y_1^*)\in\gra A_1$ and $(x,x_2^*),(y,y_2^*)\in\gra A_2,$ and
\item[(ii)]$x_1^*+x_2^*=x^*$ and $y_1^*+y_2^*=y^*.$
\end{itemize} This gives us that
$$(x,x_1^*+x_2^*)=(x,x^*)\in\gra(A_1+A_2)\mbox{ and }(y,y_1^*+y_2^*)=(y,y^*)\in\gra(A_1+A_2).$$
Now consider $\langle x,y^*\rangle-\langle y,x^*\rangle:$
\begin{align*}
\langle x,y^*\rangle-\langle y,x^*\rangle&=\langle x,y_1^*\rangle+\langle x,y_2^*\rangle-\langle y,x_1^*\rangle-\langle y,x_2^*\rangle\\
&=(\langle x,y_1^*\rangle-\langle y,x_1^*\rangle)+(\langle x,y_2^*\rangle-\langle y,x_2^*\rangle)\\
&=(\langle x,y_1^*\rangle-\langle x,y_1^*\rangle)+(\langle x,y_2^*\rangle-\langle x,y_2^*\rangle)\\
&\mbox{\qquad\footnotesize($A_1$ is symmetric)\qquad\quad($A_2$ is symmetric)\normalsize}\\
&=0.
\end{align*}
Thus, for any arbitrary $(x,x^*),(y,y^*)\in\gra(A_1+A_2)$ we have that $\langle x,y^*\rangle=\langle y,x^*\rangle.$ Therefore, $A_1+A_2$ is symmetric.
\end{proof}
\begin{prop} Let $A_1,A_2$ be maximally monotone symmetric linear relations on $\R^n.$ Then $A_1^*+A_2^*=(A_1+A_2)^*.$
\end{prop}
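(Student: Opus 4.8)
The plan is to establish the identity $A_1^* + A_2^* = (A_1+A_2)^*$ by combining two ingredients: first, the fact that a maximally monotone symmetric linear relation equals its own adjoint; and second, a standard containment $A_1^* + A_2^* \subseteq (A_1+A_2)^*$ valid for any linear relations, promoted to equality via a maximality argument. Since each $A_i$ is symmetric, $\gra A_i \subseteq \gra A_i^*$; and since each $A_i$ is maximally monotone (hence maximally monotone symmetric), one knows that $A_i^* = A_i$ — indeed a maximally monotone symmetric linear relation is exactly $\partial q_{A_i}$ for the convex function $q_{A_i}$, and the adjoint of such an object coincides with it. So the right-hand side becomes $(A_1+A_2)^*$, and by Lemma~\ref{lem:sumsym} the left-hand side $A_1^*+A_2^* = A_1+A_2$ is itself a maximally monotone symmetric linear relation, hence equals its own adjoint $(A_1+A_2)^*$. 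That gives the result in one stroke, provided the auxiliary claim ``maximally monotone symmetric linear relation $\Rightarrow$ self-adjoint'' is in hand.

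If one prefers to avoid invoking self-adjointness as a black box, I would instead argue directly with the adjoint's defining graph condition. First I would record the elementary inclusion: if $(u,u^*)\in\gra A_1^*$ and $(u,v^*)\in\gra A_2^*$, then for every $(a,a^*)\in\gra(A_1+A_2)$ we can split $a^* = a_1^* + a_2^*$ with $(a,a_i^*)\in\gra A_i$ (using $\dom(A_1+A_2)=\dom A_1\cap\dom A_2$, exactly as in the proof of Lemma~\ref{lem:sumsym}), so that $\langle a, u^*+v^*\rangle = \langle a,u^*\rangle + \langle a,v^*\rangle = \langle a_1^*, u\rangle + \langle a_2^*, u\rangle = \langle a^*, u\rangle$, i.e. $(u, u^*+v^*)\in\gra(A_1+A_2)^*$. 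This proves $A_1^*+A_2^* \subseteq (A_1+A_2)^*$ with no hypotheses beyond linearity. For the reverse inclusion I would use dimension counting on the relevant subspaces of $\R^{n\times n}$, together with the fact that for a monotone linear relation $A$ one has the orthogonal-complement relationship between $\gra A^*$, $\gra A$, and $A0 \subseteq (\dom A)^\perp$; maximal monotonicity pins down $\dim\gra A = n$, and the analogous count for $A_1+A_2$ (whose maximal monotonicity is Lemma~\ref{lem:sumsym}) forces the containment to be an equality.

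The main obstacle I anticipate is the reverse inclusion $(A_1+A_2)^* \subseteq A_1^* + A_2^*$: adjoints of sums of unbounded/set-valued linear relations do not split additively in general, and one genuinely needs the maximal monotonicity (equivalently, $\dom A_i$ being closed subspaces with $\dom A_1 - \dom A_2$ closed, which is what powers the sum rule in Lemma~\ref{lem:sumsym}). The cleanest route is almost certainly the first one — observe $A_i^* = A_i$, apply Lemma~\ref{lem:sumsym} to conclude $A_1+A_2$ is maximally monotone symmetric, and then note such an operator satisfies $(A_1+A_2)^* = A_1+A_2 = A_1^*+A_2^*$ — so I would lead with that and relegate the direct graph computation to a remark. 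The one point requiring care is justifying $A^* = A$ for a maximally monotone symmetric linear relation $A$: symmetry gives $\gra A \subseteq \gra A^*$, and since $A$ is maximally monotone while $A^*$ is still monotone (symmetry is preserved, and a symmetric monotone linear relation stays monotone), maximality forces equality.
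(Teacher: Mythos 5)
Your main route is essentially the paper's proof: both arguments reduce everything to the self-adjointness $A_i^*=A_i$ (symmetry gives $\gra A_i\subseteq\gra A_i^*$, and maximal monotonicity upgrades this to equality — though the paper justifies the monotonicity of $A_i^*$ by citing that the adjoint of a maximally monotone linear relation is again maximally monotone, rather than your somewhat circular parenthetical) and then to the self-adjointness of $A_1+A_2$, which you obtain cleanly from Lemma~\ref{lem:sumsym} while the paper reaches the same conclusion through a biconjugation and closed-graph manipulation. The proposal is correct and matches the paper's approach; your packaging of the final step via Lemma~\ref{lem:sumsym} is, if anything, tidier.
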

\begin{proof}$(\Rightarrow)$ By definition of symmetry, we have \begin{equation}\label{symeq}\gra A_1\subseteq\gra A_1^*.\end{equation} Since $A_1$ is maximally monotone, $A_1^*$ is also maximally monotone by \cite[Corollary 5.11]{bauschke2012brezis}. Then \eqref{symeq} is actually an equality and we have
\begin{equation}\label{eq:sstar}
A_1=A_1^*,\mbox{ and similarly }A_2=A_2^*.\end{equation}
Then by definition of adjoint,
\begin{align*}
\gra((A_1+A_2)^*)&=\{(x,x^*)\in\R^{n\times n}:(x^*,-x)\in(\gra(A_1+A_2))^\perp\}\\
&=\{(x,x^*)\in\R^{n\times n}:(x^*,-x)\in(\gra(A_1^*+A_2^*))^\perp\}\qquad\mbox{by \eqref{eq:sstar}}\\
&=\gra((A_1^*+A_2^*)^*).
\end{align*}
Once more by definition of symmetry, we have $\gra(A_1^*+A_2^*)\subseteq\gra((A_1^*+A_2^*)^*).$ Therefore, $\gra(A_1^*+A_2^*)\subseteq\gra((A_1+A_2)^*).$\\

\noindent$(\Leftarrow$) We have $\gra((A_1+A_2)^*)=\gra((A_1^*+A_2^*)^*)$ from above, and by symmetry $\gra((A_1^*+A_2^*)^*)\subseteq\gra((A_1^*+A_2^*)^{**}).$ Since we are in $\R^n,$ $\gra((A_1^*+A_2^*)^{**})$ is closed and thus $\gra((A_1^*+A_2^*)^{**})=\gra(A_1^*+A_2^*).$ Therefore, $\gra((A_1+A_2)^*)\subseteq\gra(A_1^*+A_2^*).$\footnote{Thank you to Dr. Walaa Moursi for contributing to this proof.}
\end{proof}
\begin{prop}\label{fourprops}
Let $A$ be a maximally monotone linear relation. Then
\begin{itemize}
\item[(i)] $q_A$ is well-defined, i.e. $q_A:\R^n\rightarrow\R\cup\{\infty\},$
\item[(ii)] $q_A$ is convex,
\item[(iii)] $q_A=q_{A_+},$
\item[(iv)] $\partial q_A=A_+.$
\end{itemize}
\end{prop}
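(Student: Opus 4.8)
The plan is to prove the four statements in sequence, since each builds on the previous one. For (i), I would argue that $q_A$ takes values in $\R\cup\{\infty\}$ by combining two facts already available: since $A$ is maximally monotone it is in particular monotone, so Fact~\ref{liangprop} gives that $\langle x,Ax\rangle$ is single-valued and nonnegative for every $x\in\dom A$; off $\dom A$ the function is set to $+\infty$ by definition. Hence $q_A$ is a genuine extended-real-valued function with values in $\R\cup\{\infty\}$, which is exactly the assertion (this also overlaps with Proposition~\ref{prop:symormono}(ii), so I would just cite that). The domain being a linear subspace (because $\gra A$ is) ensures $q_A$ is at least proper, as $q_A(0)=0$.

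For (ii), convexity of $q_A$, the cleanest route is to avoid computing Hessians of a set-valued object and instead use the variational characterization. The key identity to establish is $q_A=\tfrac12 e\text{-type}$ expression — more concretely, I would show $q_A(x)=\sup_{(a,a^*)\in\gra A}\bigl[\langle a^*,x\rangle-\tfrac12\langle a,a^*\rangle\bigr]$, i.e. that $q_A$ is a supremum of affine functions of $x$ and therefore convex and lsc. To see this, fix $x\in\dom A$ with $x^*\in Ax$; monotonicity applied to the pairs $(x,x^*)$ and $(a,a^*)$ gives $\langle x^*-a^*,x-a\rangle\ge0$, which rearranges (using single-valuedness of $\langle\cdot,A\cdot\rangle$ so that $\langle x,x^*\rangle$ is unambiguous) to $\langle a^*,x\rangle-\tfrac12\langle a,a^*\rangle\le\langle x^*,x\rangle-\tfrac12\langle x,x^*\rangle=q_A(x)$, with equality at $(a,a^*)=(x,x^*)$; and for $x\notin\dom A=\dom\text{(proj)}$ one checks the supremum is $+\infty$ using that $\gra A$ is a subspace not contained in the relevant hyperplane. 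This simultaneously gives convexity and lower semicontinuity.

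For (iii) and (iv) I would work together. First note $q_A=q_{A_+}$: since $\langle x,Ax\rangle$ is single-valued on $\dom A$, symmetrizing does not change the quadratic form — $\langle x,A_+x\rangle=\tfrac12\langle x,Ax\rangle+\tfrac12\langle x,A^*x\rangle$ and on $\dom A$ the value $\langle x,A^*x\rangle$ agrees with $\langle x,Ax\rangle$ by the adjoint relation (here I would invoke that for a maximally monotone symmetric-ish linear relation $\dom A_+=\dom A$, which needs a short argument using $A0\subseteq(\dom A)^\perp$). Then for (iv), I would compute $\partial q_A$ directly from the definition of subgradient together with the affine-supremum representation from (ii): $x^*\in\partial q_A(x)$ iff $x^*$ attains the supremum, iff $(x,x^*)\in\gra A$ up to the symmetric part, giving $\partial q_A=A_+$; alternatively, use Fact~\ref{maxcyc}/the fact that $q_A\in\Gamma_0$ so $\partial q_A$ is maximally cyclically monotone, and check $A_+\subseteq\partial q_A$ by the Fenchel–Young inequality and then invoke maximality of $A_+$ (as a maximally monotone linear relation, by Lemma~\ref{lem:sumsym} applied to $A$ and $A^*$).

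The main obstacle I anticipate is the bookkeeping around domains and the set-valued part $A0$: making sure that $\langle x-a,\cdot\rangle$-type expansions, the equality $\dom A_+=\dom A$, and the claim ``$A_+$ is maximally monotone'' are all justified when $A$ is merely a maximally monotone linear relation rather than a matrix. I would lean on Fact~\ref{liangjinfactlinear}, Fact~\ref{liangprop}, Lemma~\ref{lem:sumsym}, and the standard fact $A0\subseteq(\dom A)^\perp$ to handle these, and otherwise the argument is a routine assembly of convex-analytic identities.
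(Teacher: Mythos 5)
Your parts (i) and (iii) track the paper's proof (the paper cites Proposition \ref{prop:symormono} for (i) and uses the identity $\langle x,Ax\rangle=\langle x,A^*x\rangle$ for (iii)), and your fallback route for (iv) --- show $A_+\subseteq\partial q_A$ and invoke maximal monotonicity of $A_+$ --- is sound and is essentially what the paper's terse chain $\partial q_A=\partial q_{A_+}=A_+$ is hiding. But there is a genuine gap in your proof of (ii). The representation $q_A(x)=\sup_{(a,a^*)\in\gra A}\bigl[\langle a^*,x\rangle-\tfrac12\langle a,a^*\rangle\bigr]$ is \emph{false} for a general maximally monotone linear relation: the rearrangement you describe turns the monotonicity inequality $\langle a^*,x\rangle+\langle x^*,a\rangle\leq\langle x,x^*\rangle+\langle a,a^*\rangle$ into the needed bound $2\langle a^*,x\rangle\leq\langle x,x^*\rangle+\langle a,a^*\rangle$ only if $\langle a^*,x\rangle=\langle x^*,a\rangle$, i.e.\ only if $A$ is symmetric; single-valuedness of $\langle\cdot,A\cdot\rangle$ does not give this for distinct points. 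Concretely, take $A$ to be rotation by $\pi/2$ on $\R^2$, which is maximally monotone with $q_A\equiv0$, yet $\sup_a[\langle Aa,x\rangle-\tfrac12\langle a,Aa\rangle]=\sup_a\langle a,A^\top x\rangle=+\infty$ for $x\neq0$. So the supremum of affine functions you construct is not $q_A$.

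The paper sidesteps this by citing an external result for convexity. Two repairs are available: either prove convexity directly via the standard computation
\begin{equation*}
\lambda q_A(x)+(1-\lambda)q_A(y)-q_A(\lambda x+(1-\lambda)y)=\tfrac{\lambda(1-\lambda)}{2}\langle x-y,x^*-y^*\rangle\geq0
\end{equation*}
for $x^*\in Ax$, $y^*\in Ay$ (using Fact \ref{liangjinfactlinear} to expand $q_A$ at the convex combination, and monotonicity for the sign), or reorder your argument: prove (iii) first (it needs only the pointwise identity $\langle x,Ax\rangle=\langle x,A^*x\rangle$, not convexity) and then run your affine-supremum argument on the \emph{symmetric} relation $A_+$, where the rearrangement is legitimate. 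With either fix the rest of your outline goes through, though you should also make explicit why $A_+$ is maximally monotone (via the cited maximal monotonicity of $A^*$ together with Lemma \ref{lem:sumsym}), since both your version of (iv) and the paper's rely on it.
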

\begin{proof}
(i) This is direct from Proposition \ref{prop:symormono}.\\

\noindent(ii) See \cite[Proposition 2.3]{monolinrel}.\\

\noindent(iii) Since $A$ is maximally monotone, $A^*$ is also maximally monotone \cite[Corollary 5.11]{bauschke2012brezis}. By definition of $A^*,$ we have $\langle x,Ax\rangle=\langle A^*x,x\rangle=\langle x,A^*x\rangle.$ Then
\begin{align*}
q_A(x)=\frac{1}{2}\langle x,Ax\rangle&=\frac{1}{2}\left(\frac{\langle x,Ax\rangle+\langle x,A^*x\rangle}{2}\right)\\
&=\frac{1}{2}\left\langle x,\frac{Ax+A^*x}{2}\right\rangle\\
&=\frac{1}{2}\langle x,A_+x\rangle=q_{A_+}(x).
\end{align*}
(iv) Since $A$ is maximally monotone, $A^*$ is as well, hence $A_+$ is as well. Then
$$\partial q_A=\partial q_{A_+}=A_+=\frac{1}{2}(A+A^*).$$
\end{proof}
\begin{lem}\label{symlemma}
Let $A$ be a maximally monotone symmetric linear relation. Then $\partial q_A=A.$
\end{lem}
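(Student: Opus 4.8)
The plan is to combine Proposition \ref{fourprops}(iv) with the observation that a maximally monotone symmetric linear relation is its own adjoint. First I would recall from Proposition \ref{fourprops}(iv) that for any maximally monotone linear relation $A$ we have $\partial q_A = A_+ = \frac{1}{2}(A+A^*)$. So the entire task reduces to showing that $A_+ = A$ when $A$ is, in addition, symmetric; equivalently, that $A^* = A$.

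The key step is the identity $A^* = A$. By Definition \ref{def:symmetric}, symmetry gives $\gra A \subseteq \gra A^*$. For the reverse inclusion I would invoke the fact used already in the preceding proposition: since $A$ is maximally monotone, $A^*$ is also maximally monotone (by \cite[Corollary 5.11]{bauschke2012brezis}). A maximally monotone operator cannot be a proper subset of another monotone operator; since $\gra A^* \supseteq \gra A$ and $A^*$ is monotone while $A$ is maximally monotone, the inclusion cannot be strict, so $\gra A = \gra A^*$, i.e. $A = A^*$. Then $A_+ = \frac{1}{2}(A + A^*) = A$, and Proposition \ref{fourprops}(iv) yields $\partial q_A = A_+ = A$.

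I expect essentially no obstacle here: the result is a one-line corollary of Proposition \ref{fourprops}(iv) once the self-adjointness $A = A^*$ is in hand, and that self-adjointness is exactly the argument already spelled out in the proof of the proposition stating $A_1^* + A_2^* = (A_1+A_2)^*$ (see equation \eqref{eq:sstar} there). The only point that warrants a word of care is the direction of the maximality argument — one must make sure that it is $A$ (not $A^*$) whose maximality is being used to upgrade $\gra A \subseteq \gra A^*$ to an equality, together with the monotonicity of $A^*$; both facts are available. Alternatively, one could cite \eqref{eq:sstar} directly and skip reproving $A = A^*$.
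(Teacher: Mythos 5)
Your proposal is correct and follows the paper's proof exactly: the paper also deduces $A=A^*$ from symmetry together with maximal monotonicity and then applies Proposition \ref{fourprops}(iv). You simply spell out the maximality argument for $\gra A=\gra A^*$ that the paper leaves terse (and justifies elsewhere, at \eqref{eq:sstar}), and your care about which operator's maximality is being invoked is the right instinct.
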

\begin{proof}
Since $A$ is symmetric, $A=A^*.$ The result follows from Proposition \ref{fourprops}(iv).\end{proof}
\begin{cor}\label{cora1a2}
Let $A_1,$ $A_2$ be maximally monotone symmetric linear relations such that $q_{A_1}=q_{A_2}.$ Then $A_1=A_2.$
\end{cor}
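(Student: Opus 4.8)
The plan is to deduce this immediately from Lemma~\ref{symlemma}. Since $A_1$ and $A_2$ are maximally monotone symmetric linear relations, that lemma gives $\partial q_{A_1}=A_1$ and $\partial q_{A_2}=A_2$. The hypothesis $q_{A_1}=q_{A_2}$ means these are literally the same function on $\R^n$, so their Fenchel subdifferentials coincide as set-valued operators: $\partial q_{A_1}=\partial q_{A_2}$. Chaining the three equalities yields $A_1=\partial q_{A_1}=\partial q_{A_2}=A_2$, which is the claim.

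The only point that deserves a word of care is that taking the subdifferential of both sides of $q_{A_1}=q_{A_2}$ is legitimate: the subdifferential is defined pointwise purely in terms of the function values (via the definition $\partial f(x)=\{x^*:f(y)\geq f(x)+\langle x^*,y-x\rangle~\forall y\}$ recalled in Section~\ref{sec:prelim}), so equal functions have equal subdifferentials with no convexity or finiteness caveat needed. This is where one might otherwise worry, but there is no obstacle here; the short proof is essentially a one-line syllogism.

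\begin{proof}
By Lemma~\ref{symlemma}, $\partial q_{A_1}=A_1$ and $\partial q_{A_2}=A_2$. Since $q_{A_1}=q_{A_2}$ as functions on $\R^n$, their Fenchel subdifferentials agree, i.e. $\partial q_{A_1}=\partial q_{A_2}$. Therefore $A_1=\partial q_{A_1}=\partial q_{A_2}=A_2$.
\end{proof}
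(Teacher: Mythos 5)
Your proof is correct and is essentially identical to the paper's: the paper also deduces the corollary in one line from Lemma~\ref{symlemma} via $\partial q_{A_1}=A_1$ and $\partial q_{A_2}=A_2$. Your extra remark that equal functions have equal Fenchel subdifferentials is a fair point of care, though the paper leaves it implicit.
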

\begin{proof}This follows from $\partial q_{A_1}=A_1,$ $\partial q_{A_2}=A_2.$ \end{proof}
\begin{rem}
The maximal monotonicity condition of Corollary \ref{cora1a2} is necessary. As a counterexample, consider a monotone selection $S$ of $A$ and set
$$A_1=S,\qquad A_2=S+A0.$$Then $q_{A_1}=q_{A_2},$ but $A_1\neq A_2$ unless $A0=\{0\}.$
\end{rem}
\begin{prop}\label{p:difference}
Let $A_1,A_2$ be monotone linear relations. Then$$q_{A_1}+q_{A_2}=q_{A_1+A_2}.$$In addition, if $\dom A_1\subseteq\dom A_2$ and $A_1-A_2$ is monotone, then$$q_{A_1}-q_{A_2}=q_{A_1-A_2}.$$
\end{prop}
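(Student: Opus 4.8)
The plan is to work pointwise in $x$ and reduce everything to the defining formula $q_A(x)=\frac12\langle x,Ax\rangle$ on $\dom A$ and $+\infty$ off it, using the additivity of the domains of linear relations together with Fact~\ref{liangjinfactlinear}. For the first identity, I would first note that $\dom(A_1+A_2)=\dom A_1\cap\dom A_2$, so the indicator parts match: $q_{A_1+A_2}(x)=\infty$ exactly when $q_{A_1}(x)+q_{A_2}(x)=\infty$. For $x\in\dom A_1\cap\dom A_2$, by Fact~\ref{liangprop} each of $\langle x,A_1x\rangle$, $\langle x,A_2x\rangle$ is single-valued and nonnegative, and $(A_1+A_2)x=A_1x+A_2x$ as sets; hence $\langle x,(A_1+A_2)x\rangle=\langle x,A_1x\rangle+\langle x,A_2x\rangle$ is single-valued, and dividing by $2$ gives $q_{A_1+A_2}(x)=q_{A_1}(x)+q_{A_2}(x)$. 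That $A_1+A_2$ is again a monotone linear relation (so that $q_{A_1+A_2}$ is even defined as an extended-real-valued function) follows since $\gra(A_1+A_2)$ is a subspace and $\langle x,(A_1+A_2)x\rangle\geq0$.

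For the second identity, under the hypotheses $\dom A_1\subseteq\dom A_2$ and $A_1-A_2$ monotone, I would first observe $\dom(A_1-A_2)=\dom A_1\cap\dom A_2=\dom A_1$, so again the domains of $q_{A_1}-q_{A_2}$ and $q_{A_1-A_2}$ coincide (both are $\dom A_1$; note the convention $\infty-\infty=\infty$ handles the off-domain case because $q_{A_1}(x)=\infty$ forces the left side to $\infty$, and $x\notin\dom A_1$ forces $q_{A_1-A_2}(x)=\infty$). On $\dom A_1$, apply the first part of the proposition to the pair $A_1-A_2$ and $A_2$: since $(A_1-A_2)+A_2$ and $A_1$ agree as linear relations (both equal $A_1$ restricted appropriately — here I would use Fact~\ref{liangjinfactlinear} to check $(A_1-A_2)x+A_2x = A_1x$ for $x\in\dom A_1$, being careful about the $A0$ terms), we get $q_{A_1-A_2}+q_{A_2}=q_{(A_1-A_2)+A_2}=q_{A_1}$, and subtracting $q_{A_2}$ (finite on $\dom A_1\subseteq\dom A_2$) yields the claim.

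The main obstacle is bookkeeping with the constant term $A0$ of the linear relations: $A_1x$, $A_2x$ and $(A_1\pm A_2)x$ are affine subspaces (translates of $A0$), and one must verify that the set-additions behave correctly so that $(A_1-A_2)x+A_2x=A_1x$ rather than $A_1x+A_20$ or similar. Fact~\ref{liangjinfactlinear} with $\alpha=1,\beta=-1$ gives $A(x-y)=Ax-Ay+A0$, which is exactly the tool to manage these zero-relation terms, and Fact~\ref{liangprop} guarantees that all the inner products $\langle x,A_ix\rangle$ in play are single-valued, so the subtraction of functions is unambiguous on the relevant domain. Once the domain identifications are pinned down, the arithmetic is immediate.
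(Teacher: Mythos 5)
Your argument is correct and, for the sum rule, coincides with the paper's: both reduce to the domain identity $\dom(A_1+A_2)=\dom A_1\cap\dom A_2$ together with single-valuedness of the inner products via Fact~\ref{liangprop}. For the difference, you take a genuinely different (if small) detour: the paper simply computes $\langle x,(A_1-A_2)x\rangle=\langle x,A_1x\rangle-\langle x,A_2x\rangle$ directly on $\dom A_1$ and then matches domains, using the convention $\infty-\infty=\infty$ exactly as you do; you instead reduce to the sum rule applied to the pair $(A_1-A_2,\,A_2)$. Your route works, but the point you flag as ``bookkeeping'' should actually be settled, because the set identity $(A_1-A_2)x+A_2x=A_1x$ is false in general: since $A_2x$ is a translate of the subspace $A_20$, one has $A_2x-A_2x=A_20$ and hence $(A_1-A_2)x+A_2x=A_1x+A_20$. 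This does not derail your proof, because $q$ only sees the inner product with $x$, and for a monotone linear relation $A_20\subseteq(\dom A_2)^\perp$ (as the paper records just before this proposition), so $\langle x,A_20\rangle=\{0\}$ for every $x\in\dom A_1\subseteq\dom A_2$ and therefore $q_{(A_1-A_2)+A_2}=q_{A_1}$ all the same. In short, the paper's direct computation sidesteps the $A0$ wrinkle entirely, while your reduction buys a more structural argument (the difference rule becomes a corollary of the sum rule) at the cost of this one extra check, which you should state explicitly rather than defer.
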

\begin{proof}
By definition, we have $q_{A_1}(x)=\frac{1}{2}\langle x,A_1x\rangle$ if $x\in\dom A_1,$ $\infty$ otherwise. Similarly, $q_{A_2}(x)=\frac{1}{2}\langle x,A_2x\rangle$ if $x\in\dom A_2,$ $\infty$ otherwise. Thus,
\begin{align*}(q_{A_1}+q_{A_2})(x)&=\begin{cases}
\frac{1}{2}\langle x,(A_1+A_2)x\rangle,&\mbox{if }x\in\dom A_1\cap\dom A_2,\\
\infty,&\mbox{else.}
\end{cases}\\
&=q_{A_1+A_2}(x).\end{align*}
Now suppose that $\dom A_1\subseteq\dom A_2$ and $A_1-A_2$ is monotone. Then for $x\in\dom A_2$ with $x\in\dom A_1,$ we have that $q_{A_1}-q_{A_2}$ is single-valued, so that$$q_{A_1}(x)-q_{A_2}(x)=q_{A_1-A_2}(x).$$When $x\not\in\dom A_1,$ we have$$q_{A_1}(x)-q_{A_2}(x)=\infty-q_{A_2}(x)=\infty.$$Now$$\dom q_{A_1-A_2}=\dom(A_1-A_2)=\dom A_1\cap\dom A_2=\dom A_1,$$so that$$q_{A_1-A_2}(x)=\infty\mbox{ when }x\not\in\dom A_1.$$
Therefore, $$q_{A_1}-q_{A_2}=q_{A_1-A_2}.$$
\end{proof}
The condition $\dom A_1\subseteq\dom A_2$ is necessary for $q_{A_1}-q_{A_2}=q_{A_1-A_2}.$ The following example shows that Proposition~\ref{p:difference} fails if
$\dom A_{1}\not\subseteq \dom A_{2}$.
\begin{ex} Let $A_{1}, A_{2}:\R^2\rightrightarrows\R^2$ be maximally monotone linear relations given by
$$A_{1}=\Id, \quad A_{2}=N_{\R\times \{0\}}$$ where
$$N_{\R\times\{0\}}(x,y)=\begin{cases}
\{0\}\times\R &\mbox{if }y=0,\\
\varnothing &\mbox{if }y\neq 0.
\end{cases}
$$
Then
$$(A_{1}-A_{2})(x,y)=\begin{cases}
(x,0)-\{0\}\times\R &\mbox{if }y=0,\\
\varnothing & \mbox{if }y\neq 0
\end{cases}
$$
is a maximally monotone linear relation.
We have
$$q_{A_{1}}(0,1)-q_{A_{2}}(0,1)=1/2-\infty=-\infty,$$
but
$$q_{A_{1}-A_{2}}(0,1)=\infty$$
because $(0,1)\not\in \dom(A_{1}-A_{2}).$
\end{ex}
\begin{prop}\label{tfae5}
Let $A$ be a maximally monotone symmetric linear relation. Then the following are equivalent:
\begin{itemize}
\item[(i)]$q_A(x)=0;$
\item[(ii)]$x\in\argmin q_A;$
\item[(iii)]$0\in\partial q_A(x);$
\item[(iv)]$0\in Ax;$
\item[(v)]$x\in A^{-1}0.$
\end{itemize}
\end{prop}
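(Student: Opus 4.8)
The plan is to verify the equivalence of the five conditions by a short chain that routes everything through the subdifferential calculus for $q_A$ already set up above. The one preliminary observation needed is that $q_A$ attains its infimum, equal to $0$: since $\gra A$ is a linear subspace, $(0,0)\in\gra A$, so $0\in\dom A$ and $q_A(0)=\tfrac12\langle 0,A0\rangle=0$; on the other hand Proposition~\ref{fourprops}(i) (via Fact~\ref{liangprop}) gives $q_A\geq 0$ everywhere, so $q_A$ is proper and $\min q_A=0$.

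With this in hand the argument is essentially bookkeeping. The equivalence (i)$\Leftrightarrow$(ii) is immediate from $\min q_A=0$: $x\in\argmin q_A$ precisely when $q_A(x)$ equals the minimum value $0$. The equivalence (ii)$\Leftrightarrow$(iii) is Fermat's rule applied to the proper convex function $q_A$ (convexity by Proposition~\ref{fourprops}(ii)): a point minimizes a proper convex function globally if and only if $0$ belongs to its subdifferential there. For (iii)$\Leftrightarrow$(iv) we invoke Lemma~\ref{symlemma}, which gives $\partial q_A=A$ for a maximally monotone symmetric linear relation, so $0\in\partial q_A(x)$ is literally the statement $0\in Ax$. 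Finally (iv)$\Leftrightarrow$(v) is just unwinding the definition of the set-valued inverse: $0\in Ax\Leftrightarrow(x,0)\in\gra A\Leftrightarrow(0,x)\in\gra A^{-1}\Leftrightarrow x\in A^{-1}0$.

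I do not expect a genuine obstacle: the substantive work has been front-loaded into Proposition~\ref{fourprops} and Lemma~\ref{symlemma}. The only point deserving a moment's care is the claim that $\min q_A=0$ is actually attained, which is exactly why one records $0\in\dom A$ from the subspace property of $\gra A$ rather than merely noting $q_A\geq 0$; without this, (i) would not obviously force (ii). (An equally short alternative route would replace the use of Fermat's rule by deducing (i)$\Rightarrow$(iv) directly from Lemma~\ref{symlemma}, since $q_A(x)=0=\min q_A$ makes $x$ a minimizer of the convex function $q_A=q_A$, hence $0\in\partial q_A(x)=Ax$; this is the same content repackaged.)
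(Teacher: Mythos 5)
Your proposal is correct and follows essentially the same route as the paper: nonnegativity of $q_A$ via Fact~\ref{liangprop}, Fermat's rule for the subdifferential characterization of minimizers, Lemma~\ref{symlemma} to identify $\partial q_A$ with $A$, and the definition of the set-valued inverse for the last equivalence. The only cosmetic difference is structural: the paper closes the loop with the cycle (i)$\Rightarrow$(ii)$\Rightarrow$(iii)$\Rightarrow$(iv)$\Rightarrow$(i), whereas you prove each link as a biconditional, which is why you need the (correct and worthwhile) preliminary observation that $0\in\dom A$ and $q_A(0)=0$, so the minimum value $0$ is actually attained.
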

\begin{proof}(i)$\Rightarrow$(ii) Let $q_A(x)=0.$ Since $A$ is monotone, by Fact \ref{liangprop} we have $q_A(y)\geq0~\forall y\in\R^n.$ Hence,
$$\min\limits_{y\in\R^n}q_A(y)=0=q_A(x)\Rightarrow x\in\argmin q_A.$$\\
(ii)$\Rightarrow$(iii) This is direct from Fermat's Theorem, every local extremum of $q_A$ is a stationary point.\\

\noindent(iii)$\Rightarrow$(iv) Let $0\in\partial q_A(x).$ Since $A$ is symmetric and maximally monotone, by Lemma \ref{symlemma} we have $\partial q_A(x)=A(x).$ Therefore, $0\in Ax.$\\

\noindent(iv)$\Rightarrow$(i) Let $0\in Ax.$ Then, since $q_A(x)$ is single-valued by Fact \ref{liangprop}, we have$$q_A(x)=\frac{1}{2}\langle x,Ax\rangle=\frac{1}{2}\langle x,0\rangle=0.$$
\noindent(iv)$\Leftrightarrow$(v) We have
$0\in Ax\Leftrightarrow x\in A^{-1}0.$
\end{proof}

\subsection{The Fenchel conjugate of $q_A$}

Conjugacy plays a vital role in convex analysis \cite[Chapter X]{hiriart2013convex}. One often finds it quite beneficial to work temporarily in a dual space in order to solve a problem, then return the answer to the primal space. In this section, we explore the Fenchel conjugate of $q_A.$ We show that the set-valued inverse $A^{-1}$ is more convenient for computing $q_A^*.$
\begin{prop}\label{qastartheorem}
Let $A:\R^n\rightrightarrows\R^n$ be a maximally monotone symmetric linear relation. Then $q_A^*=q_{A^{-1}},$ that is,
$$q_A^*(y)=\begin{cases}\frac{1}{2}\langle y,A^{-1}y\rangle,&\mbox{if }y\in\ran A,\\\infty,&\mbox{if }y\not\in\ran A.\end{cases}$$
Consequently,
$$q_A^{**}(x)=\begin{cases}\frac{1}{2}\langle x,Ax\rangle,&\mbox{if }x\in\ran A^{-1},\\\infty,&\mbox{if }x\not\in\ran A^{-1}.\end{cases}$$
Thus, $q_A^{**}=q_A,$ so $q_A$ is lsc and convex.
\end{prop}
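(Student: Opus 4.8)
The plan is to prove the conjugacy identity $q_A^*=q_{A^{-1}}$ and then obtain the formula for $q_A^{**}$ (and hence lower semicontinuity and convexity) by applying that identity a second time, to $A^{-1}$. First I would record the ingredients: $q_A$ is proper, since $0\in\dom A$ (as $\gra A$ is a subspace), $q_A(0)=0$, and $q_A\ge0$ by Fact~\ref{liangprop}; $\partial q_A=A$ by Lemma~\ref{symlemma}; and $A^{-1}$ is again a maximally monotone symmetric linear relation (symmetry is Lemma~\ref{lem:invsym}, $\gra A^{-1}$ is a subspace because $\gra A$ is, and both monotonicity and maximality are preserved by the coordinate swap defining the inverse). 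I would also note that $\dom A$ and $\ran A$, being projections of the subspace $\gra A$, are (closed) subspaces of $\R^n$.

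Next I would compute $q_A^*$ on $\ran A$ using the Fenchel--Young equality, which for any proper function holds in the sharp form $q_A(x)+q_A^*(x^*)=\langle x,x^*\rangle$ precisely when $x^*\in\partial q_A(x)$. Given $y\in\ran A$, pick $x_y$ with $y\in Ax_y$; then $y\in\partial q_A(x_y)$, and since $\langle x_y,Ax_y\rangle=\langle x_y,y\rangle$ is single-valued,
$$q_A^*(y)=\langle x_y,y\rangle-q_A(x_y)=\langle x_y,y\rangle-\tfrac12\langle x_y,y\rangle=\tfrac12\langle x_y,y\rangle=\tfrac12\langle A^{-1}y,y\rangle=q_{A^{-1}}(y),$$
the outcome being independent of the choice of $x_y$ because $\langle\cdot,A^{-1}\cdot\rangle$ is single-valued on $\dom A^{-1}=\ran A$ (Fact~\ref{liangprop}).

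The one step that is not mere bookkeeping is the off-range case: for $y\notin\ran A$ I must show $q_A^*(y)=\infty$, which matches $q_{A^{-1}}(y)=\infty$. This hinges on the sublemma $(\ran A)^\perp\subseteq\dom A$. To prove it, take $w\perp\ran A$; for every $(x,x^*)\in\gra A$ we have $\langle x,x^*\rangle\ge0$ (monotonicity against $(0,0)\in\gra A$) and $\langle w,x^*\rangle=0$, so $\langle w-x,0-x^*\rangle=\langle x,x^*\rangle-\langle w,x^*\rangle\ge0$; thus $\gra A\cup\{(w,0)\}$ is monotone, and maximality forces $(w,0)\in\gra A$, i.e. $w\in\dom A$ and $0\in Aw$. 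Now decompose $y=y_1+y_2$ with $y_1\in\ran A$ and $0\ne y_2\in(\ran A)^\perp\subseteq\dom A$; then $0\in A(ty_2)$ and $q_A(ty_2)=0$ for all $t\in\R$, so $q_A^*(y)\ge\langle y,ty_2\rangle-q_A(ty_2)=t\|y_2\|^2\to\infty$. Combining the two cases yields $q_A^*=q_{A^{-1}}$.

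Finally, applying the identity just proved to the maximally monotone symmetric linear relation $A^{-1}$ gives $q_{A^{-1}}^*=q_{(A^{-1})^{-1}}=q_A$, hence $q_A^{**}=(q_A^*)^*=q_{A^{-1}}^*=q_A$, which is the asserted expression for $q_A^{**}$. Since $q_A=q_A^{**}$ is a Fenchel conjugate it is lsc and convex, and it is proper by the first paragraph, so $q_A\in\Gamma_0(\R^n)$. I expect the only real obstacle to be the off-range identification $\dom q_A^*=\ran A$ --- equivalently the sublemma $(\ran A)^\perp\subseteq\dom A$ --- with everything else following routinely from Fenchel--Young and the already-established behaviour of $\partial q_A$ and $A^{-1}$.
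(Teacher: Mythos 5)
Your proof is correct and follows the same overall strategy as the paper's: split on whether $y\in\ran A$, handle the on-range case via the subdifferential identity $\partial q_A=A$ (your Fenchel--Young phrasing and the paper's first-order optimality condition are the same computation), and show the conjugate is $+\infty$ off $\ran A$. The one place you genuinely diverge is the off-range case: the paper produces a direction $z\in(\ran A)^\perp$ with $\langle z,y\rangle>0$ by a separating-hyperplane argument and then asserts $\langle kz,A(kz)\rangle=0$ without comment, whereas you get the direction by orthogonal decomposition and explicitly prove the sublemma that every $w\in(\ran A)^\perp$ satisfies $(w,0)\in\gra A$, via monotonicity of $\gra A\cup\{(w,0)\}$ plus maximality. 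That sublemma is exactly what the paper's step \eqref{supthm2} silently relies on (it amounts to $(\ran A)^\perp=A^{-1}0\subseteq\dom A$ for maximally monotone $A$), so your version is the more self-contained of the two; likewise your explicit reduction of the ``consequently'' part to applying the identity to $A^{-1}$ (justified by Lemma \ref{lem:invsym} and preservation of maximal monotonicity under inversion) spells out what the paper leaves implicit.
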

\begin{proof}
By the definition of $q_A^*,$ we have
\begin{align}
q_A^*(y)&=\sup\limits_x\{\langle y,x\rangle-q_A(x)\}\nonumber\\
&=\sup\limits_{x\in\dom A}\left\{\langle y,x\rangle-\frac{1}{2}\langle x,Ax\rangle\right\}.\label{supthm}
\end{align}
Consider two cases.\\

\noindent(i) Let $y\in\ran A.$ Then the solution to the supremum in \eqref{supthm} is $\bar{x}$ such that $0\in\partial\left[\langle y,\bar{x}\rangle-\frac{1}{2}\langle\bar{x},A\bar{x}\rangle\right].$ This gives $y\in A\bar{x},$ hence, $\bar{x}\in A^{-1}y.$ Then
$$q_A^*(y)=\langle y,A^{-1}y\rangle-\frac{1}{2}\langle A^{-1}y,y\rangle=\frac{1}{2}\langle y,A^{-1}y\rangle.$$
(ii) Let $y\not\in\ran A.$ Note that since $\ran A$ is closed and convex, by \cite[Corollary 11.4.2]{convanalrock} there exist $z\in\R^n$ and $r\in\R$ such that
$$\langle z,y\rangle>r\geq\sup\limits_{x\in\ran A}\langle x,z\rangle.$$ Since $\ran A$ is a subspace, we have $0\in\ran A$ and $r\geq0.$ Also since $\ran A$ is a subspace, we have $kx\in\ran A~\forall x\in\ran A,~\forall k\in\R.$ Hence, $r\geq k\langle x,z\rangle$ for all $x\in\ran A$ and for all $k\in\R.$ Thus, $\langle x,z\rangle=0$ for all $x\in\ran A$ (otherwise, for $\langle x,z\rangle\neq0$ one could choose $k$ such that $k\langle x,z\rangle>r$). Then $\sup_{x\in\ran A}\langle x,z\rangle=0,$ hence $\langle z,y\rangle>0.$ Now noting that
\begin{equation}\label{supthm2}
\sup\limits_{k>0}\left\{\langle y,kz\rangle-\frac{1}{2}\langle kz,A(kz)\rangle\right\}=\sup\limits_{k>0}\langle y,kz\rangle=\infty,\end{equation} and that the supremum of \eqref{supthm} is greater than or equal to that of \eqref{supthm2}, we have $q^*_A(y)=\infty.$
\end{proof}
\begin{cor}
Let $A$ be symmetric, positive definite and nonsingular. Then $q_A^*=q_{A^{-1}},$ where $A^{-1}$ is the classical inverse.
\end{cor}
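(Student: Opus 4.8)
The plan is to derive this corollary directly from Proposition~\ref{qastartheorem} by verifying that a symmetric positive definite nonsingular matrix $A$ is, in particular, a maximally monotone symmetric linear relation, and then observing that for such an $A$ the set-valued inverse $A^{-1}$ coincides with the classical matrix inverse. First I would note that $A \in S_n^{++}$ is a single-valued linear operator with full domain $\R^n$, so $\gra A$ is an $n$-dimensional subspace of $\R^{n \times n}$, hence $A$ is a linear relation; symmetry of the matrix $A$ is precisely the condition $\langle x, Ay\rangle = \langle Ax, y\rangle$ for all $x,y$, which is Definition~\ref{def:symmetric}; and positive definiteness gives $\langle x, Ax\rangle > 0$ for $x \neq 0$, so $A$ is monotone, and since $\dom A = \R^n$ it is maximally monotone (e.g.\ by Fact~\ref{rockwets12.12}, or simply because no monotone operator properly contains a maximally monotone one with full domain).

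Next I would address the inverse. Since $A$ is nonsingular, the classical inverse $A^{-1}$ exists as an $n \times n$ matrix, and $(x^*, x) \in \gra A^{-1}$ in the set-valued sense means $x^* \in Ax$, i.e.\ $x^* = Ax$, i.e.\ $x = A^{-1}x^*$ with $A^{-1}$ the matrix inverse; thus the set-valued inverse and the classical inverse have the same graph and are the same operator. Consequently $\ran A = \R^n$, so the ``$y \in \ran A$'' branch in Proposition~\ref{qastartheorem} is always in force, and $q_A^*(y) = \tfrac{1}{2}\langle y, A^{-1}y\rangle = q_{A^{-1}}(y)$ for every $y \in \R^n$, with $A^{-1}$ interpreted as the ordinary matrix inverse.

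There is really no main obstacle here; the corollary is a specialization, and the only thing to be careful about is the bookkeeping that the two notions of ``inverse'' agree and that the domain/range exceptions in Proposition~\ref{qastartheorem} become vacuous in the nonsingular case. I would keep the write-up to two or three sentences invoking Proposition~\ref{qastartheorem}.
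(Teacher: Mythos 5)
Your proposal is correct and is exactly the intended argument: the paper states this corollary without proof as an immediate specialization of Proposition~\ref{qastartheorem}, and your verification that a symmetric positive definite nonsingular matrix is a maximally monotone symmetric linear relation whose set-valued inverse agrees with the classical inverse (so that the $y\notin\ran A$ branch is vacuous) supplies precisely the bookkeeping the authors left implicit.
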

\begin{cor}\label{qcor2}
Let $A$ be symmetric and positive semidefinite. Then $q_A^*=q_{A^{-1}}$ where
$$q_{A^{-1}}(x)=\begin{cases}\frac{1}{2}\langle x,A^{-1}x\rangle,&\mbox{if }x\in\ran A,\\\infty,&\mbox{if }x\not\in\ran A,\end{cases}$$ and $A^{-1}$ is the set-valued inverse.
\end{cor}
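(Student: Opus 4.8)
The plan is to observe that a symmetric positive semidefinite matrix $A$, regarded as a single-valued operator defined on all of $\R^n$, is a maximally monotone symmetric linear relation, so that Proposition \ref{qastartheorem} applies verbatim. First I would verify the three structural hypotheses. Since $A$ is linear, $\gra A=\{(x,Ax):x\in\R^n\}$ is a linear subspace of $\R^{n\times n}$, so $A$ is a linear relation. Monotonicity is immediate: for all $x,y$ we have $\langle Ax-Ay,x-y\rangle=\langle A(x-y),x-y\rangle\ge 0$ because $A$ is positive semidefinite. For maximal monotonicity I would use Fact \ref{rockwets12.12}: for any $\lambda>0$ the matrix $\Id+\lambda A$ is symmetric and positive definite, hence invertible, so $\dom(\Id+\lambda A)^{-1}=\R^n$, and therefore $A$ is maximally monotone. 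Finally, symmetry holds because $A=A^\top$ gives $\langle x,Ay\rangle=\langle Ax,y\rangle$ for all $x,y\in\R^n$, which is exactly Definition \ref{def:symmetric}.

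With these checks done, Proposition \ref{qastartheorem} immediately gives $q_A^*=q_{A^{-1}}$ with $q_A^*(y)=\tfrac12\langle y,A^{-1}y\rangle$ for $y\in\ran A$ and $q_A^*(y)=\infty$ otherwise, where $A^{-1}$ denotes the set-valued inverse $\gra A^{-1}=\{(Ax,x):x\in\R^n\}$. I would then point out that $\dom A^{-1}=\ran A$, and that $A^{-1}$ is monotone (since $A$ is) and symmetric (by Lemma \ref{lem:invsym}); hence by Proposition \ref{prop:symormono} (equivalently Fact \ref{liangprop}) the quantity $\langle y,A^{-1}y\rangle$ is single-valued for $y\in\ran A$ and $q_{A^{-1}}$ is a bona fide extended-real-valued function. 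This is precisely what makes the displayed formula well-posed even when $A$ is singular.

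There is no genuine obstacle: the corollary is just the specialization of Proposition \ref{qastartheorem} to $A\in S_n^+$. The only point deserving a word of care is the meaning of $A^{-1}$ as the \emph{set-valued} inverse when $\ker A\neq\{0\}$; in passing I would remark that for $y\in\ran A$ one has $A^{-1}y=A^{\dagger}y+\ker A$, and since $\ran A=(\ker A)^\perp$ for symmetric $A$ it follows that $\langle y,A^{-1}y\rangle=\langle y,A^{\dagger}y\rangle$, so $q_{A^{-1}}$ may equally be written with the Moore--Penrose inverse $A^{\dagger}$, recovering and extending the classical positive definite case of the previous corollary.
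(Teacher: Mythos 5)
Your proposal is correct and follows the same route the paper intends: Corollary \ref{qcor2} is stated without proof as an immediate specialization of Proposition \ref{qastartheorem}, and your verification that a symmetric positive semidefinite matrix is a maximally monotone symmetric linear relation (linearity of the graph, monotonicity from positive semidefiniteness, maximality via Fact \ref{rockwets12.12}, symmetry from $A=A^\top$) is exactly the routine check being left to the reader. Your closing remark identifying $q_{A^{-1}}$ with $q_{A^\dagger}$ on $\ran A$ is also consistent with Proposition \ref{threeprops} and Corollary \ref{c:conj:dag} later in the paper.
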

\begin{prop}
Let $A$ be a maximally monotone symmetric linear relation. Then $q_A^*=q_A$ if and only if $A=A^{-1},$ if and only if $A=\Id.$
\end{prop}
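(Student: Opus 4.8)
The plan is to chain together the equivalences via the identity $q_A^* = q_{A^{-1}}$ from Proposition~\ref{qastartheorem}, together with the bijection between maximally monotone symmetric linear relations and their associated $q$-functions provided by Corollary~\ref{cora1a2}. For the first biconditional, $q_A^* = q_A \Leftrightarrow A = A^{-1}$: if $A = A^{-1}$, then trivially $q_A^* = q_{A^{-1}} = q_A$. Conversely, suppose $q_A^* = q_A$; since $q_A^* = q_{A^{-1}}$ by Proposition~\ref{qastartheorem}, we get $q_{A^{-1}} = q_A$. Now I must observe that $A^{-1}$ is again a maximally monotone symmetric linear relation: symmetry of $A^{-1}$ is Lemma~\ref{lem:invsym}, the graph of $A^{-1}$ is a linear subspace since $\gra A$ is, and maximal monotonicity of $A^{-1}$ follows from Fact~\ref{rockwets12.12} (the set-valued inverse of a maximally monotone operator is maximally monotone, as $\dom(\Id + A^{-1})^{-1} = \R^n$ by Fact~\ref{rockwets12.14}). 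Then Corollary~\ref{cora1a2} applied to $A_1 = A$, $A_2 = A^{-1}$ yields $A = A^{-1}$.

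For the second biconditional, $A = A^{-1} \Leftrightarrow A = \Id$: the direction $A = \Id \Rightarrow A = \Id^{-1} = \Id$ is immediate. For the forward direction, suppose $A = A^{-1}$. Since $\gra A^{-1} = \{(x^*,x) : (x,x^*) \in \gra A\}$, the condition $A = A^{-1}$ says $\gra A$ is symmetric under coordinate swap. Combined with $A$ being symmetric and maximally monotone, I expect to show $\gra A$ must be the diagonal $\{(x,x) : x \in \R^n\}$. One clean route: by Lemma~\ref{symlemma}, $A = \partial q_A$ with $q_A \in \Gamma_0(\R^n)$; the relation $A = A^{-1}$ then reads $\partial q_A = (\partial q_A)^{-1} = \partial q_A^*$, so $q_A$ and $q_A^*$ have the same subdifferential, forcing $q_A = q_A^* + \text{const}$; since both are $2$-homogeneous (or: evaluate at $0$, noting $q_A(0) = 0 = q_A^*(0)$), the constant is $0$, so $q_A = q_A^* = q_{A^{-1}}$. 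The self-conjugate function with $q_A(0)=0$ on $\R^n$ whose conjugate equals itself is $q = \frac{1}{2}\|\cdot\|^2$: indeed from $q_A = q_A^*$ and the Fenchel--Young inequality $q_A(x) + q_A^*(x) \geq \|x\|^2$ with equality iff $x \in \partial q_A(x)$, we get $2q_A(x) \geq \|x\|^2$ for all $x$ and applying the same to $A^{-1}$ gives the reverse, hence $q_A = q = q_{\Id}$, and then Corollary~\ref{cora1a2} gives $A = \Id$.

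The main obstacle I anticipate is the last step — pinning down that the only maximally monotone symmetric linear relation with $q_A = q_A^*$ is $A = \Id$ — since one must be careful that the Fenchel--Young equality analysis genuinely forces equality everywhere and not just an inequality in one direction; the symmetric appeal to $A^{-1}$ (which is legitimate precisely because $A = A^{-1}$) is what closes the gap. An alternative, perhaps more elementary, route avoiding conjugates: from $A = A^{-1}$ and monotonicity, for any $(x,x^*) \in \gra A$ we also have $(x^*,x) \in \gra A$, so $\langle x^* - x, x - x^*\rangle \geq 0$ by monotonicity applied to the pair $(x,x^*),(x^*,x)$, i.e. $-\|x - x^*\|^2 \geq 0$, forcing $x^* = x$; hence $\gra A \subseteq \{(x,x)\}$, and since $A$ is maximally monotone (and the diagonal is monotone) we conclude $\gra A = \{(x,x) : x\in\R^n\}$, that is, $A = \Id$. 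This second argument is shorter and I would likely present it as the main proof, using the conjugate-based reasoning only as motivation.
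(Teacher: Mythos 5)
Your proof is correct, and for the second equivalence it is genuinely more self-contained than the paper's. For $q_A^*=q_A\Leftrightarrow A=A^{-1}$ you and the paper travel essentially the same road through $q_A^*=q_{A^{-1}}$: the paper argues via $\partial q_A=A$ and $\partial q_A^*=A^{-1}$ and cancels subdifferentials, while you invoke Corollary~\ref{cora1a2} (whose proof is that same subdifferential cancellation); your version is slightly more careful in that you explicitly verify $A^{-1}$ is again a maximally monotone symmetric linear relation before applying the corollary, a hypothesis the paper leaves implicit. The real divergence is the equivalence $A=A^{-1}\Leftrightarrow A=\Id$: the paper simply cites an external reference, whereas your two-line graph argument --- $(x,x^*)\in\gra A$ and $A=A^{-1}$ give $(x^*,x)\in\gra A$, monotonicity applied to this pair yields $-\|x-x^*\|^2\ge 0$, hence $\gra A\subseteq\gra\Id$, and maximality upgrades this to equality --- proves it from scratch using only monotonicity and maximality (not even symmetry or linearity). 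That is a nice gain in self-containedness. One small caveat: in your conjugate-based ``motivation'' for this step, the phrase ``applying the same to $A^{-1}$ gives the reverse'' does not actually produce the reverse inequality (it reproduces $2q_A\ge\|\cdot\|^2$); the correct way to close that argument is to conjugate $q_A\ge q$ to get $q_A=q_A^*\le q^*=q$. Since you demote that passage to motivation and rest the proof on the graph argument, this does not affect correctness.
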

\begin{proof}
The proof that $A=A^{-1}$ if and only if $A=\Id$ is found in \cite[Proposition 2.8]{resaverage}. In the sequel, we prove that $q_A^*=q_A$ if and only if $A=A^{-1}.$\\
$(\Leftarrow)$ Suppose that $A=A^{-1}.$ Then we see immediately by Corollary \ref{qcor2} that $q_A^*=q_A.$\\
$(\Rightarrow)$ Suppose that $q_A^*=q_A.$ Then by Lemma \ref{symlemma}, we have that $\partial q_A=A=\partial q_A^*.$ By Corollary \ref{qcor2}, we find $\partial q_A^*=A^{-1}.$ Therefore, $A=A^{-1}.$
\end{proof}
\begin{prop}\label{parallelsum}
Let $A_i:\R^n\rightrightarrows\R^n$ be a maximally monotone symmetric linear relation for each $i\in\{1,\ldots,m\}.$ Then the infimal convolution $f=q_{A_1}\oblong\cdots\oblong q_{A_m}$ is a generalized linear-quadratic function and
$$\partial f=\left(\sum\limits_{i=1}^nA^{-1}_i\right)^{-1},$$which is the \emph{parallel sum} of $A_i.$
\end{prop}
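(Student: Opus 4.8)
The plan is to pass to the Fenchel conjugate, where the infimal convolution turns into a sum, identify $f^{*}$ as $q_{B'}$ for the linear relation $B':=\sum_{i=1}^{m}A_{i}^{-1}$, conjugate once more to obtain $f^{**}=q_{B}$ with $B:=(B')^{-1}$, and then close the loop by showing $f=f^{**}=q_{B}$ via exactness of the infimal convolution on $\dom B$. The subdifferential formula $\partial f=\partial q_{B}=B$ then follows from Lemma~\ref{symlemma}.

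First I would take care of the operator-theoretic bookkeeping. Since $\gra A_{i}^{-1}$ is the image of $\gra A_{i}$ under the coordinate swap $(x,x^{*})\mapsto(x^{*},x)$, each $A_{i}^{-1}$ is again a linear relation; it is symmetric by Lemma~\ref{lem:invsym}; and it is maximally monotone because the set-valued inverse of a maximally monotone operator is maximally monotone (combine Fact~\ref{rockwets12.14}, which yields $\dom(\Id+A_{i}^{-1})^{-1}=\dom\bigl(\Id-(\Id+A_{i})^{-1}\bigr)=\R^{n}$, with Fact~\ref{rockwets12.12}). Applying Lemma~\ref{lem:sumsym} inductively, $B'=\sum_{i=1}^{m}A_{i}^{-1}$ is a maximally monotone symmetric linear relation, hence so is $B=(B')^{-1}$ by the same two observations. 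Applying Proposition~\ref{p:difference} inductively to the monotone linear relations $A_{i}^{-1}$ also gives $\sum_{i=1}^{m}q_{A_{i}^{-1}}=q_{B'}$.

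Next I would dualize. The identity $(g_{1}\oblong\cdots\oblong g_{m})^{*}=g_{1}^{*}+\cdots+g_{m}^{*}$ holds with no qualification: expanding the supremum that defines the left-hand side and separating the independent variables reduces it to the fact that the supremum of a sum over independent variables is the sum of the suprema. Combining this with Proposition~\ref{qastartheorem} (so that $q_{A_{i}}^{*}=q_{A_{i}^{-1}}$) and the previous paragraph,
$$
f^{*}=\bigl(q_{A_{1}}\oblong\cdots\oblong q_{A_{m}}\bigr)^{*}=\sum_{i=1}^{m}q_{A_{i}}^{*}=\sum_{i=1}^{m}q_{A_{i}^{-1}}=q_{B'}.
$$
Conjugating once more and applying Proposition~\ref{qastartheorem} to $B'$ gives $f^{**}=q_{B'}^{*}=q_{(B')^{-1}}=q_{B}$, which is proper, lsc and convex. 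Since $f^{**}\le f$ always, it remains to prove $f\le q_{B}$. For $x\notin\dom q_{B}$ this is automatic; for $x\in\dom q_{B}=\dom B=\ran B'$, choose $x^{*}$ with $x^{*}\in Bx$, i.e. $x\in B'x^{*}=\sum_{i=1}^{m}A_{i}^{-1}x^{*}$, and pick $y_{i}\in A_{i}^{-1}x^{*}$ with $x=\sum_{i=1}^{m}y_{i}$, so that $x^{*}\in A_{i}y_{i}$ for each $i$. Each $q_{A_{i}}(y_{i})$ is single-valued by Fact~\ref{liangprop} and
$$
\sum_{i=1}^{m}q_{A_{i}}(y_{i})=\sum_{i=1}^{m}\tfrac12\langle y_{i},x^{*}\rangle=\tfrac12\Bigl\langle\sum_{i=1}^{m}y_{i},\,x^{*}\Bigr\rangle=\tfrac12\langle x,x^{*}\rangle=q_{B}(x),
$$
the last equality because $q_{B}(x)=\tfrac12\langle x,Bx\rangle$ is single-valued (Fact~\ref{liangprop}). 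Hence $f(x)\le q_{B}(x)$, and with $f\ge f^{**}=q_{B}$ we conclude $f=q_{B}$. As $B$ is a linear relation, $q_{B}$ is a generalized linear-quadratic function, and as $B$ is a maximally monotone symmetric linear relation, Lemma~\ref{symlemma} yields $\partial f=\partial q_{B}=B=\bigl(\sum_{i=1}^{m}A_{i}^{-1}\bigr)^{-1}$, the parallel sum of the $A_{i}$.

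The main obstacle is not a single computation but the return trip from the dual side: because $f$ is not known to be lsc in advance, biconjugation only gives $q_{B}\le f$, so one must separately establish exactness of the infimal convolution. The device above — extracting an optimal decomposition $x=\sum_{i}y_{i}$ of an arbitrary $x\in\dom B$ directly from a selection $x^{*}\in Bx$ and unwinding $x\in B'x^{*}$ through $B'=\sum_{i}A_{i}^{-1}$ — is what supplies the missing inequality, and it shows along the way that the infimum is attained on $\dom B$. The remaining points (that $\dom B=\ran B'$, and that every $\langle y,Ay\rangle$ that appears is single-valued) are immediate from Fact~\ref{liangprop}.
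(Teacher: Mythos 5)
Your proof is correct, but it travels in the opposite direction from the paper's. The paper starts on the dual side: it forms $q_{B'}=\sum_i q_{A_i^{-1}}=\sum_i q_{A_i}^*$ (using Lemmas \ref{lem:invsym} and \ref{lem:sumsym} and Corollary \ref{qcor2}), notes that $0\in\bigcap_i\ri\ran A_i$ since each $\ran A_i$ is a subspace, and then invokes Rockafellar's sum--conjugate theorem \cite[Theorem 16.4]{convanalrock} to get $q_{(B')^{-1}}=q_{B'}^*=\bigl(\sum_i q_{A_i}^*\bigr)^*=q_{A_1}\oblong\cdots\oblong q_{A_m}=f$ in a single stroke; the constraint qualification does all the work of guaranteeing that the conjugate of the sum is the (exact) infimal convolution of the conjugates, so no lower-semicontinuity or exactness issue ever arises. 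You instead conjugate from the primal side using the unconditional identity $(\oblong_i g_i)^*=\sum_i g_i^*$, obtain $f^{**}=q_B$, and then close the biconjugation gap $f\le f^{**}$ by hand, extracting an explicit decomposition $x=\sum_i y_i$ with $y_i\in A_i^{-1}x^*$ from a selection $x^*\in Bx$. Your route is longer but entirely self-contained (no appeal to an external qualification theorem), and it has the bonus of exhibiting an explicit minimizer of the infimal convolution at every $x\in\dom B$, which the paper's argument only yields implicitly through the exactness clause of \cite[Theorem 16.4]{convanalrock}. All the supporting steps you use (maximal monotonicity of $A_i^{-1}$ via Facts \ref{rockwets12.14} and \ref{rockwets12.12}, the sum rule of Proposition \ref{p:difference}, single-valuedness via Fact \ref{liangprop}, and the final identification $\partial q_B=B$ via Lemma \ref{symlemma}) check out.
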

\begin{proof}
By Lemmas \ref{lem:invsym} and \ref{lem:sumsym}, we have that $A_1^{-1}+\cdots+A_m^{-1}$ is a maximally monotone symmetric linear relation. By Corollary \ref{qcor2}, $q^*_{A_i}=q_{A_i^{-1}}.$ Since $0\in\bigcap_{i=1}^m\ri\ran A_i,$ \cite[Theorem 16.4]{convanalrock} gives
\begin{align*}
q_{(A_1^{-1}+\cdots+A_m^{-1})^{-1}}=q^*_{A_1^{-1}+\cdots+A_m^{-1}}&=\left(q_{A_1^{-1}}+\cdots+q_{A_m^{-1}}\right)^*\\
&=\left(q^*_{A_1}+\cdots+q^*_{A_m}\right)^*\\
&=q_{A_1}\oblong\cdots\oblong q_{A_m}=f.
\end{align*}Therefore, by Lemma \ref{symlemma}, we have the statement of the proposition.
\end{proof}
\begin{prop}
Let $A_1:\R^n\rightrightarrows\R^n$ be a maximally monotone symmetric linear relation and $A_2:\R^n\rightarrow\R^n$ be symmetric positive definite. Define $h:\R^n\rightarrow\OR$ by
\begin{equation}\label{stardif}
q_{A_2}\oblong h=q_{A_1}.\end{equation} Then for every $x\in\R^n,$
\begin{equation}\label{stardif2}
h(x)=\left(q^*_{A_1}-q^*_{A_2}\right)^*(x)=\sup\limits_y\left[q_{A_1}(x+y)-q_{A_2}(y)\right].\end{equation} Consequently, when $A_1^{-1}-A_2^{-1}$ is monotone, one has
\begin{equation}\label{stardif3}
\partial h=\left(A_1^{-1}-A_2^{-1}\right)^{-1},
\end{equation}which is the \emph{star-difference} of $A_1$ and $A_2.$
\end{prop}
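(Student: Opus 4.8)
The plan is to read this as a deconvolution identity and work on the dual side, where infimal convolution turns into addition. First I would apply the Fenchel conjugate to the defining identity \eqref{stardif}. Since the conjugate of an infimal convolution is unconditionally the sum of the conjugates (\cite[Proposition 13.21(iii)]{convmono}), this gives $q_{A_2}^*+h^*=q_{A_1}^*$, hence $h^*=q_{A_1}^*-q_{A_2}^*$; here $q_{A_1}^*=q_{A_1^{-1}}$ by Proposition~\ref{qastartheorem} and $q_{A_2}^*=q_{A_2^{-1}}$ (with $A_2^{-1}$ the classical inverse, again symmetric positive definite) by Corollary~\ref{qcor2}, identifications I will need again below. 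Conjugating once more and invoking $h=h^{**}$ — valid because $h\in\Gamma_0(\R^n)$ — then yields the first equality $h=(q_{A_1}^*-q_{A_2}^*)^*$ of \eqref{stardif2}.

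For the second equality in \eqref{stardif2} I would expand the conjugate directly,
$$(q_{A_1}^*-q_{A_2}^*)^*(x)=\sup_y\big[\langle x,y\rangle-q_{A_1}^*(y)+q_{A_2}^*(y)\big],$$
and compare it with $\sup_z[q_{A_1}(x+z)-q_{A_2}(z)]$. Since $q_{A_1}\in\Gamma_0(\R^n)$ by Proposition~\ref{qastartheorem}, we may substitute $q_{A_1}(x+z)=\sup_y[\langle x+z,y\rangle-q_{A_1}^*(y)]$, interchange the two suprema (always permitted), and recognize the resulting inner supremum over $z$ as $q_{A_2}^*(y)$ by the definition of the conjugate. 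This gives $\sup_z[q_{A_1}(x+z)-q_{A_2}(z)]=(q_{A_1}^*-q_{A_2}^*)^*(x)$, completing \eqref{stardif2}.

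For \eqref{stardif3}, assume now that $B:=A_1^{-1}-A_2^{-1}$ is monotone. I would first observe that $\dom A_2^{-1}=\ran A_2=\R^n$, because $A_2$ positive definite is invertible, so $\dom A_1^{-1}=\ran A_1\subseteq\dom A_2^{-1}$ holds automatically; Proposition~\ref{p:difference} then applies to $A_1^{-1},A_2^{-1}$ and gives $h^*=q_{A_1^{-1}}-q_{A_2^{-1}}=q_B$. Next I would verify that $B$ is a maximally monotone symmetric linear relation: it is a linear relation as a difference of linear relations; it is symmetric because $A_1^{-1}$ (Lemma~\ref{lem:invsym}) and $A_2^{-1}$ are symmetric; and it is maximally monotone because $\gra B$ is the image of the $n$-dimensional subspace $\gra A_1^{-1}$ under the linear isomorphism $(x,x^*)\mapsto(x,x^*-A_2^{-1}x)$ of $\R^n\times\R^n$, hence $n$-dimensional, and a monotone linear relation on $\R^n$ whose graph has dimension $n$ is maximal. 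With $B$ maximally monotone and symmetric, Proposition~\ref{qastartheorem} gives $h=(q_B)^*=q_{B^{-1}}$, and since $B^{-1}$ is again maximally monotone and symmetric (Lemma~\ref{lem:invsym}), Lemma~\ref{symlemma} gives $\partial h=\partial q_{B^{-1}}=B^{-1}=(A_1^{-1}-A_2^{-1})^{-1}$; as a cross-check, one can sum back $q_{A_2^{-1}}+q_B=q_{A_1^{-1}}$ by Proposition~\ref{p:difference} and conjugate to confirm this $h$ indeed solves \eqref{stardif}.

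The main obstacles I anticipate are bookkeeping rather than conceptual. The first is confirming that the $h$ determined by $q_{A_2}\oblong h=q_{A_1}$ lies in $\Gamma_0(\R^n)$, so that $h=h^{**}$ and the deconvolution is genuinely realized; in the monotone case this is automatic since $h=q_{B^{-1}}\in\Gamma_0(\R^n)$, and in general one argues that $q_{A_1}^*-q_{A_2}^*$ is proper (its domain $\ran A_1$ is nonempty and $q_{A_2}^*$ is finite-valued). The second is the maximality of $B=A_1^{-1}-A_2^{-1}$, which is only hypothesized to be monotone; the dimension-counting argument above closes this, but it leans on the standard characterization of maximally monotone linear relations on $\R^n$ as precisely the monotone linear relations with $n$-dimensional graph, for which a citation (e.g. to \cite{liangjin} or \cite{rockwets}) is appropriate.
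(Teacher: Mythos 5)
Your proposal is correct and follows essentially the same route as the paper's proof: conjugate the deconvolution identity to obtain $h^*=q_{A_1}^*-q_{A_2}^*$, recover \eqref{stardif2} via Toland--Singer duality (which you verify directly by an exchange of suprema rather than citing it), and then derive \eqref{stardif3} by showing that $A_1^{-1}-A_2^{-1}$ is a maximally monotone symmetric linear relation and passing to $q_{(A_1^{-1}-A_2^{-1})^{-1}}$. The only substantive divergence is how maximality of $A_1^{-1}-A_2^{-1}$ is established --- the paper uses $\dom(A_1^{-1}-A_2^{-1})=\dom A_1^{-1}$, $(A_1^{-1}-A_2^{-1})(0)=A_1^{-1}0=(\dom A_1^{-1})^\perp$ and cites \cite[Fact 2.4(v)]{bauschke2010borwein}, whereas you count the dimension of the graph under the shear $(x,x^*)\mapsto(x,x^*-A_2^{-1}x)$ and invoke the characterization of maximally monotone linear relations on $\R^n$ as those with $n$-dimensional graph --- and both arguments are sound.
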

\begin{proof}
Taking the Fenchel conjugate of \eqref{stardif} yields $h^*=q_{A_1}^*-q_{A_2}^*.$ Then by Toland-Singer duality, we have \eqref{stardif2}. Observe that $A_1^{-1}-A_2^{-1}$ is maximally monotone because of the following. We have $$\dom(A_1^{-1}-A_2^{-1})=\ran A_1\cap\ran A_2=\ran A_1=\dom A_1^{-1},\mbox{ and}$$
$$(A_1^{-1}-A_2^{-1})(0)=A_1^{-1}(0).$$ Because $A_1^{-1}$ is maximally monotone, $(\dom A_1^{-1})^\perp=A_1^{-1}(0).$ Then by \cite[Fact 2.4(v)]{bauschke2010borwein}, $A_1^{-1}-A_2^{-1}$ is maximally monotone. Since $q^*_{A_i}=q_{A_i^{-1}},$ $q_{A_1^{-1}}-q_{A_2^{-1}}=q_{A_1^{-1}-A_2^{-1}}$ and $A_1^{-1}-A_2^{-1}$ is a maximally monotone symmetric linear relation, we have \eqref{stardif3}.
\end{proof}
\begin{rem}
This result generalizes that of Hiriart-Urruty \cite{hiriart1994deconvolution}, because $A_1^{-1}-A_2^{-1}$ need not be positive definite. (See \cite[Example 2.7]{hiriart1994deconvolution}.)
\end{rem}

\subsection{Relating the set-valued inverse and the Moore--Penrose inverse}

The set-valued inverse $A^{-1}$ of a linear mapping and the Moore--Penrose inverse $A^\dagger$ both have their uses. For properties of $A^\dagger,$ see \cite[p. 423--428]{matanalapp}. In this section, we show how the two inverses are closely related. We also include a description of the Moore--Penrose inverse for a particular mapping, the orthogonal projector.
\begin{prop}\label{threeprops}
The following hold.
\begin{itemize}
\item[(i)] Let $A:\R^n\rightrightarrows\R^n$ be a linear mapping. Then
$$A^{-1}x=\begin{cases}
A^\dagger x+A^{-1}0,&\mbox{ if }x\in\ran A,\\
\varnothing,&\mbox{ if }x\not\in\ran A.
\end{cases}$$
\item[(ii)]\label{i:two} Let $A:\R^n\rightrightarrows\R^n$ be maximally monotone. Then
$$A^{-1}x=A^\dagger x+N_{\dom A^{-1}}=\begin{cases}
A^\dagger x+(\ran A)^\perp,&\mbox{ if }x\in\ran A,\\
\varnothing,&\mbox{ if }x\not\in\ran A.
\end{cases}$$
\item[(iii)] Let $A:\R^n\rightrightarrows\R^n$ be monotone, symmetric and linear. Then
$$A^{-1}=P_{\ran A}A^\dagger P_{\ran A}+N_{\dom A^{-1}}.$$
\end{itemize}
\end{prop}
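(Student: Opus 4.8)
The plan is to prove the three parts in order, obtaining (ii) from (i) and (iii) from (ii). Throughout I would treat $A$ as a single-valued linear operator on $\R^n,$ i.e.\ a matrix, so that $A^\dagger$ denotes the ordinary Moore--Penrose inverse and the identity $AA^\dagger=P_{\ran A}$ is available; the reading in which $A$ is a set-valued linear relation is handled in the same way, using that $\dom A^{-1}=\ran A$ is then a subspace and that each set $A^{-1}x$ is affine with homogeneous part $A^{-1}0.$ For part (i) I would argue straight from the definition of the set-valued inverse: $y\in A^{-1}x\iff x\in Ay\iff Ay=x.$ If $x\notin\ran A$ this equation is inconsistent, so $A^{-1}x=\varnothing.$ If $x\in\ran A,$ then $AA^\dagger x=P_{\ran A}x=x,$ so $A^\dagger x$ solves $Ay=x$; the full solution set of a consistent linear system is any particular solution plus the solution set of the homogeneous system, and the latter is $\ker A=A^{-1}0,$ giving $A^{-1}x=A^\dagger x+A^{-1}0.$

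For part (ii) I would invoke (i) and then identify $A^{-1}0$ with $N_{\dom A^{-1}}.$ Since $\dom A^{-1}=\ran A$ is a subspace, $N_{\dom A^{-1}}(x)$ is $(\ran A)^\perp$ for $x\in\ran A$ and $\varnothing$ otherwise, so it is enough to prove $A^{-1}0=\ker A=(\ran A)^\perp.$ As $(\ran A)^\perp=\ker A^*,$ this reduces to $\ker A=\ker A^*,$ and this is where maximal monotonicity is used: monotonicity of the linear operator $A$ makes its symmetric part $A_+=\frac{1}{2}(A+A^*)$ positive semidefinite, so for $x\in\ker A$ one has $\langle A_+x,x\rangle=\langle Ax,x\rangle=0,$ hence $A_+x=0$ (because $\langle Mx,x\rangle=0$ forces $Mx=0$ when $M\succeq0$), hence $A^*x=-Ax=0$; the reverse inclusion comes from running the same argument on $A^*,$ whose symmetric part is again $A_+.$ Together with (i) and the convention $v+\varnothing=\varnothing,$ this gives $A^{-1}x=A^\dagger x+N_{\dom A^{-1}}(x)$ for all $x,$ namely $A^\dagger x+(\ran A)^\perp$ on $\ran A$ and $\varnothing$ off it.

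For part (iii) I would use the standard Moore--Penrose relations $\ran A^\dagger=\ran A^*$ and $\ker A^\dagger=(\ran A)^\perp$; by symmetry $A=A^*,$ so $\ran A^\dagger=\ran A.$ Thus $A^\dagger$ sends $\R^n$ into $\ran A$ and vanishes on $(\ran A)^\perp,$ whence $P_{\ran A}A^\dagger=A^\dagger P_{\ran A}=A^\dagger$ and so $P_{\ran A}A^\dagger P_{\ran A}=A^\dagger.$ Substituting into the identity from (ii) gives $A^{-1}=P_{\ran A}A^\dagger P_{\ran A}+N_{\dom A^{-1}},$ again read pointwise, with both sides $\varnothing$ when $x\notin\ran A.$ I expect the only genuine content to be the monotonicity step in part (ii) --- the chain $\langle A_+x,x\rangle=0\Rightarrow A_+x=0\Rightarrow\ker A=\ker A^*=(\ran A)^\perp$ --- while everything else is bookkeeping with the projector identity $AA^\dagger=P_{\ran A}$ and with the empty-set conventions for the normal cone.
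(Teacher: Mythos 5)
Your proposal is correct and follows essentially the same route as the paper: part (i) from $AA^\dagger=P_{\ran A}$ plus the affine structure of the solution set $A^{-1}x=A^\dagger x+A^{-1}0$, part (ii) by identifying $A^{-1}0$ with $(\dom A^{-1})^\perp=(\ran A)^\perp$, and part (iii) by reducing $P_{\ran A}A^\dagger P_{\ran A}$ to $A^\dagger$ via $\ran A^\dagger=\ran A^*=\ran A$ and $\ker A^\dagger=(\ran A)^\perp$. The only real difference is in (ii), where you prove the identity $A^{-1}0=(\ran A)^\perp$ from scratch (via $\ker A=\ker A^*$, using that $\langle A_+x,x\rangle=0$ forces $A_+x=0$ for the positive semidefinite symmetric part $A_+$), whereas the paper simply invokes it as a known property of maximally monotone operators; your version is self-contained and makes explicit exactly where monotonicity is used, while the paper's proof of (iii) additionally verifies monotonicity and symmetry of $P_{\ran A}A^\dagger P_{\ran A}$, which is not required for the stated identity.
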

\begin{proof}
(i) Since $AA^\dagger=P_{\ran A}~\forall x\in\ran A,$ it holds that
$$AA^\dagger x=P_{\ran A}x=x\Rightarrow A^\dagger x\in A^{-1}x.$$ Since $A^{-1}x=x^*+A^{-1}0$ for every $x^*\in Ax,$ we have
$$A^{-1}=A^\dagger+A^{-1}0\mbox{ on }\ran A.$$
(ii) Since $A$ is maximally monotone, $A^{-1}$ is as well, and $$(\dom A^{-1})^\perp=A^{-1}0.$$ Applying part (i) completes the proof.\\

\noindent(iii) If $A$ is maximally monotone and linear, then
$$\ran A^\dagger=\ran A^\top=\ran A^*=\ran A,\mbox{ and}$$
$$N_{\dom A^{-1}}(x)=(\dom A^{-1})^\perp=A^{-1}0.$$ This implies that on $\ran A=\dom A^{-1},$
$$P_{\ran A}A^\dagger P_{\ran A}=A^\dagger.$$ Now we apply part (ii). Let $x\in\R^n,$ $u=P_{\ran A}x.$ Denote $P_{\ran A}A^\dagger P_{\ran A}$ by $L.$ Using $AA^\dagger A=A,$ $L$ is monotone because
\begin{align*}
\langle x,Lx\rangle&=\langle P_{\ran A}x,A^\dagger P_{\ran A}x\rangle\\
&=\langle Au,A^\dagger Au\rangle\\
&=\langle u,AA^\dagger Au\rangle\\
&=\langle u,Au\rangle\\&\geq0.
\end{align*}
We have that $L$ is symmetric, because
$$(A^\dagger)^*=(A^*)^\dagger=A^\dagger.$$
\end{proof}
In \cite[Exercise 3.13]{convmono}, for a linear mapping $A$, one has
\begin{equation}\label{e:heinz:comb}
A^{\dag}=P_{\ran A^*}A^{-1}P_{\ran A}.
\end{equation}
For a set $\Omega\subset\R^n$, define the indicator mapping $\vartriangle:\R^n\rightarrow\R^n$ of $\Omega$ relative to
$\R^n$ by
$$\vartriangle_{\Omega}(x)=\begin{cases}
0\in\R^n &\text{ if $x\in\Omega$,}\\
\varnothing &\text{ if $x\not\in\Omega$.}
\end{cases}$$
(See, e.g., \cite{mordukhovich2006variational}.) Combing \eqref{e:heinz:comb} and Proposition~\ref{threeprops}, we obtain a
complete relationship between $A^{-1}$ and $A^{\dag}$.
\begin{cor}\label{cor:linproj}
\begin{enumerate}
\item[(i)] When $A$ is  linear mapping on $\R^n$,
$$A^{-1}=
A^{\dag}+\vartriangle_{\dom A^{-1}}+A^{-1}0, \text{ and }
$$
$$A^{\dag}=P_{\ran A^*}A^{-1}P_{\ran A}.$$
\item[(ii)] If, in addition, $A$ is maximally monotone, then
$$A^{-1}=A^{\dag}+N_{\dom A^{-1}},$$
and
$$A^{\dag}=P_{\ran A}A^{-1}P_{\ran A}.$$
\end{enumerate}
\end{cor}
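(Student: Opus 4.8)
The plan is to assemble the corollary from Proposition~\ref{threeprops} together with the identity~\eqref{e:heinz:comb}; the only real content is translating the case-by-case descriptions already available into the compact forms stated here, plus one elementary kernel/range identity for monotone linear operators.

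For part~(i), the second formula $A^{\dag}=P_{\ran A^*}A^{-1}P_{\ran A}$ is precisely~\eqref{e:heinz:comb}, so I would only add the remark that the right-hand side is genuinely single-valued: since $P_{\ran A}x\in\ran A=\dom A^{-1}$, Proposition~\ref{threeprops}(i) gives $A^{-1}P_{\ran A}x=A^{\dag}P_{\ran A}x+A^{-1}0=A^{\dag}P_{\ran A}x+\ker A$, and then $P_{\ran A^*}$ annihilates $\ker A=(\ran A^*)^{\perp}$ while fixing $\ran A^{\dag}=\ran A^*$, leaving $A^{\dag}P_{\ran A}x=A^{\dag}AA^{\dag}x=A^{\dag}x$. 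For the first formula I would start from Proposition~\ref{threeprops}(i), recall $\dom A^{-1}=\ran A$ and $A^{-1}0=\ker A$, and check two cases: if $x\in\ran A$ then $\vartriangle_{\dom A^{-1}}(x)=\{0\}$ and the right-hand side equals $A^{\dag}x+\{0\}+\ker A=A^{-1}x$; if $x\notin\ran A$ then $\vartriangle_{\dom A^{-1}}(x)=\varnothing$, so the Minkowski sum is empty, matching $A^{-1}x=\varnothing$.

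For part~(ii), the first formula $A^{-1}=A^{\dag}+N_{\dom A^{-1}}$ is exactly Proposition~\ref{threeprops}(ii), since the hypotheses make $A$ a maximally monotone operator. For the second formula it suffices, given the formula from part~(i), to show $\ran A^*=\ran A$: for a monotone linear operator $Ax=0$ forces $0=\langle x,Ax\rangle=\langle x,A_+x\rangle$ with $A_+$ symmetric positive semidefinite, hence $A_+x=0$, and since $A^*=2A_+-A$ this gives $A^*x=0$; the reverse implication is symmetric, so $\ker A=\ker A^*$ and $\ran A^*=(\ker A^*)^{\perp}=(\ker A)^{\perp}=\ran A$ (this is also recorded in the proof of Proposition~\ref{threeprops}(iii)). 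Substituting $P_{\ran A^*}=P_{\ran A}$ into the formula of part~(i) yields $A^{\dag}=P_{\ran A}A^{-1}P_{\ran A}$. I expect no serious obstacle; the only delicate point is the empty-set bookkeeping in the Minkowski sums of part~(i) and checking that the compositions defining $A^{\dag}$ are single-valued, which the argument above settles.
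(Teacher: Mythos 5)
Your proposal is correct and follows exactly the route the paper intends: the paper offers no written proof beyond the remark that the corollary follows by combining \eqref{e:heinz:comb} with Proposition \ref{threeprops}, and your write-up simply fills in that combination (including the identity $\ran A^*=\ran A$ for monotone linear mappings, which the paper already records in the proof of Proposition \ref{threeprops}(iii)). No discrepancies to report.
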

\noindent Corollary \ref{cor:linproj}(i) is a corollary of Proposition \ref{threeprops}.
\begin{cor}\label{c:conj:dag}
Let $A$ be a maximally monotone symmetric linear relation. Then
$$(q_A)^*=\begin{cases}
q_{A^\dagger},&\mbox{if }x\in\ran A,\\\infty,&\mbox{if }x\not\in\ran A.
\end{cases}$$
\end{cor}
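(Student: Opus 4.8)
The plan is to obtain this as an immediate consequence of Proposition~\ref{qastartheorem} together with Proposition~\ref{threeprops}(ii). First, since $A$ is a maximally monotone symmetric linear relation, Proposition~\ref{qastartheorem} gives $(q_A)^*=q_{A^{-1}}$, where $A^{-1}$ is the \emph{set-valued} inverse; explicitly, $(q_A)^*(y)=\tfrac12\langle y,A^{-1}y\rangle$ for $y\in\ran A=\dom A^{-1}$ and $(q_A)^*(y)=\infty$ for $y\notin\ran A$, with $\langle y,A^{-1}y\rangle$ single-valued on $\ran A$.

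Next I would rewrite $A^{-1}$ in terms of the Moore--Penrose inverse. By Proposition~\ref{threeprops}(ii), for $y\in\ran A$ we have $A^{-1}y=A^\dagger y+N_{\dom A^{-1}}=A^\dagger y+(\ran A)^\perp$. Hence, for such $y$ and any $w\in(\ran A)^\perp$,
$$\langle y,A^\dagger y+w\rangle=\langle y,A^\dagger y\rangle+\langle y,w\rangle=\langle y,A^\dagger y\rangle,$$
since $y\in\ran A$ is orthogonal to $w$. This reconfirms that the form is single-valued on $\ran A$ and shows $\tfrac12\langle y,A^{-1}y\rangle=\tfrac12\langle y,A^\dagger y\rangle=q_{A^\dagger}(y)$ there. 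Combining with the previous paragraph yields $(q_A)^*(y)=q_{A^\dagger}(y)$ for $y\in\ran A$ and $(q_A)^*(y)=\infty$ otherwise, which is the assertion. Note that $q_{A^\dagger}$ is automatically a genuine finite-valued quadratic, as $A^\dagger$ is an honest single-valued linear operator on $\R^n$; the restriction ``$y\in\ran A$'' is what carries the information.

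I do not anticipate a real obstacle: the content lies entirely in Propositions~\ref{qastartheorem} and~\ref{threeprops}, and the present statement is merely the translation of the set-valued inverse into the Moore--Penrose inverse via the orthogonality $\ran A\perp(\ran A)^\perp$. The only points requiring a moment's care are (a) keeping track of where the value $\infty$ occurs --- it is on the complement of $\ran A$ in both descriptions --- and (b) invoking part (ii) (maximal monotonicity) rather than part (i) of Proposition~\ref{threeprops}, so that $N_{\dom A^{-1}}=(\ran A)^\perp$ and the extra term $A^{-1}0$ appearing in the general linear case is already absorbed.
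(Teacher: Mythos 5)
Your proposal is correct and matches the paper's intended argument: the corollary is stated without proof as an immediate consequence of Proposition~\ref{qastartheorem} together with Proposition~\ref{threeprops}(ii), which is exactly the route you take (and the same combination the paper spells out later in Example~\ref{rockext}). Your explicit check that the $(\ran A)^\perp$ term is annihilated by pairing with $y\in\ran A$ is the right one-line justification for replacing the set-valued $A^{-1}$ by $A^\dagger$.
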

In the sequel we present the Moore--Penrose inverse of the projector mapping. We remind the reader of the definition.
\begin{df}
Let $C\subseteq\R^n$ be closed and convex. The \emph{projection} of a point $x$ onto $C$ is defined
$$P_Cx=\{y\in C:\|y-x\|=d_C(x)\},$$
where $d_C$ is the \emph{distance function:} $d_C(x)=\inf\limits_{y\in C}\|y-x\|.$ We call $P_C$ the projection operator.
\end{df}
\begin{prop}\label{projectorthm}
Let $P_L$ be the orthogonal projector onto a subspace $L\subseteq\R^n.$ Then the Moore--Penrose generalized inverse $P^\dagger_L=P_L.$
\end{prop}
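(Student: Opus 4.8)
The plan is to verify that $P_L$ itself satisfies the four defining (Penrose) equations of the Moore--Penrose inverse, and then appeal to uniqueness of that inverse to conclude $P_L^\dagger = P_L$.

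First I would record the two structural properties of an orthogonal projector onto a subspace: $P_L$ is \emph{symmetric}, $P_L^* = P_L$ (equivalently $P_L^\top = P_L$), and \emph{idempotent}, $P_L^2 = P_L$. The first holds because for any $x,y\in\R^n$ the residual $x - P_Lx$ lies in $L^\perp$ while $P_Ly\in L$, so $\scal{x - P_Lx}{P_Ly} = 0$, i.e. $\scal{x}{P_Ly} = \scal{P_Lx}{P_Ly}$, and by symmetry in $x,y$ also $\scal{P_Lx}{y} = \scal{P_Lx}{P_Ly}$; the second holds because $P_L$ fixes every point of its range $L$.

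Next I would recall (see \cite[p.\ 423--428]{matanalapp}) that $M = A^\dagger$ is the \emph{unique} matrix satisfying $AMA = A$, $MAM = M$, $(AM)^* = AM$, and $(MA)^* = MA$. Substituting $A = M = P_L$: idempotency gives $P_LP_LP_L = P_L^3 = P_L$, which covers the first two equations, and symmetry gives $(P_LP_L)^* = (P_L)^* = P_L = P_LP_L$, which covers the last two. Hence $P_L$ satisfies all four Penrose conditions, so by uniqueness $P_L^\dagger = P_L$.

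There is essentially no obstacle here; the only thing one must be sure to cite is the Penrose characterization of $A^\dagger$. As a consistency check, or as an alternative argument, one may instead diagonalize: since $P_L$ is symmetric it admits a spectral decomposition with eigenvalue $1$ on $L$ and eigenvalue $0$ on $L^\perp$, and the Moore--Penrose inverse of a symmetric matrix merely inverts the nonzero eigenvalues while leaving the kernel fixed, which returns $P_L$ unchanged. Either route yields the claim, and one could remark that this is the special case $A = P_L$ of the identities relating $A^{-1}$, $A^\dagger$ and $P_{\ran A}$ developed in Proposition \ref{threeprops} and Corollary \ref{cor:linproj}.
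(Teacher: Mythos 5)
Your proposal is correct and follows essentially the same route as the paper's proof: verify the four Penrose equations for $P_L$ using its symmetry and idempotency, then invoke uniqueness of the Moore--Penrose inverse. The only difference is that you derive symmetry and idempotency directly rather than citing them, which is a harmless (and slightly more self-contained) variation.
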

\begin{proof}
By \cite[Theorem 10.5]{roman2005advanced}, $P_L$ is idempotent $(P_L^2=P_L)$ and Hermitian $(P_L^*=P_L).$ Since $P^\dagger_L$ is the unique operator that satisfies the four Moore--Penrose equations, it is a simple matter to verify that each of them is satisfied by $P^\dagger_L=P_L:$
\begin{itemize}
\item[(i)] $AA^\dagger A=A:$ $P_LP_LP_L=P_L,$
\item[(ii)] $A^\dagger AA^\dagger=A^\dagger:$ $P_LP_LP_L=P_L,$
\item[(iii)] $(AA^\dagger)^*=AA^\dagger:$ $(P_LP_L)^*=P_LP_L\Rightarrow P_L^*=P_L,$
\item[(iv)] $(A^\dagger A)^*=A^\dagger A:$ $(P_LP_L)^*=P_LP_L\Rightarrow P_L^*=P_L.$
\end{itemize}
Therefore, $P_L^\dagger=P_L.$
\end{proof}
\begin{cor}
Let $f:\R^n\rightarrow\OR:$ $x\mapsto\frac{1}{2}\langle x,P_Lx\rangle,$ where $L$ is a subspace. Then
$$f^*(x^*)=\begin{cases}
\frac{1}{2}\langle x^*,P_Lx^*\rangle,&\mbox{ if }x^*\in L,\\\infty,&\mbox{ if }x^*\not\in L
\end{cases}=\begin{cases}\frac{1}{2}\langle x^*,x^*\rangle,&\mbox{if }x^*\in L,\\\infty,&\mbox{if }x^*\not\in L.\end{cases}$$
\end{cor}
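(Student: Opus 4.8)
The plan is to recognize that $f = q_{P_L}$ and then apply the general conjugation formula just established. First I would verify that the orthogonal projector $P_L$ onto a subspace $L$ is a maximally monotone symmetric linear relation: it is Hermitian and idempotent by \cite[Theorem 10.5]{roman2005advanced}, and $\langle x, P_L x\rangle = \|P_L x\|^2 \geq 0$, so $P_L$ is a symmetric positive semidefinite matrix, hence falls within the scope of Corollary \ref{c:conj:dag}. Thus $f(x) = \tfrac{1}{2}\langle x, P_L x\rangle = q_{P_L}(x)$ for all $x \in \R^n$ (the domain is all of $\R^n$ since $P_L$ is a matrix).

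Next I would invoke Corollary \ref{c:conj:dag} directly: $(q_{P_L})^*(x^*) = q_{P_L^\dagger}(x^*)$ when $x^* \in \ran P_L$, and $+\infty$ otherwise. Two identifications then finish the argument. The range of a projector onto a subspace is exactly that subspace, so $\ran P_L = L$; and Proposition \ref{projectorthm} gives $P_L^\dagger = P_L$. Substituting, $f^*(x^*) = \tfrac{1}{2}\langle x^*, P_L x^*\rangle$ for $x^* \in L$ and $\infty$ for $x^* \notin L$, which is the first displayed expression. The second displayed expression is immediate simplification: for $x^* \in L$ one has $P_L x^* = x^*$, hence $\tfrac{1}{2}\langle x^*, P_L x^*\rangle = \tfrac{1}{2}\langle x^*, x^*\rangle$.

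There is no genuine obstacle here; the statement is a corollary in the literal sense, combining Corollary \ref{c:conj:dag} with Proposition \ref{projectorthm}. An alternative route of equal length runs through Proposition \ref{qastartheorem}, namely $q_{P_L}^* = q_{P_L^{-1}}$, using that on $\ran P_L = L$ the set-valued inverse $P_L^{-1}$ acts as $P_L^\dagger = P_L$ modulo the normal-cone term $N_{\dom P_L^{-1}} = (\ran P_L)^\perp$, which contributes nothing to $\langle x^*, P_L^{-1} x^*\rangle$ for $x^* \in L$. The only point meriting a line of explicit care is the claim $\ran P_L = L$, which is immediate from the definition of orthogonal projection onto a subspace.
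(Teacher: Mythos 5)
Your proof is correct and follows exactly the paper's route: the paper's own proof is the one-line ``Combining Proposition \ref{projectorthm} and Corollary~\ref{c:conj:dag}, the proof is immediate,'' and you have simply spelled out the same combination, including the identifications $\ran P_L=L$ and $P_Lx^*=x^*$ on $L$ that the paper leaves implicit. No differences worth noting.
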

\begin{proof}
Combining Proposition \ref{projectorthm} and Corollary~\ref{c:conj:dag}, the proof is immediate.
\end{proof}
\noindent We end this section with an extension of Rockafellar's and Wets' result \cite[Example 11.10]{rockwets}.
\begin{ex}\label{rockext} Let $A:\RR^n\rightrightarrows\RR^n$ be a symmetric monotone linear relation, $b\in\R^n$, $c\in \R$.
Suppose
$$f(x)=q_{A}(x)+\langle b,x\rangle+c.$$
Then for every $y\in\RR^n$, the Fenchel conjugate of $f$ is
$$f^*(y)=q_{A^{-1}}(y-b)-c=\begin{cases}
\frac{1}{2}\langle y-b,A^{\dag}(y-b)\rangle-c, &\mbox{if }y-b\in\ran A,\\
\infty, &\mbox{if }y-b\not\in\ran A.
\end{cases}
$$
\end{ex}
\begin{proof} Applying Theorem~\ref{qastartheorem}, we have $\forall y\in \R^n$,
\begin{align*}
f^*(y) &= (q_{A})^{*}(y-b)-c =q_{A^{-1}}(y-b)-c.
\end{align*}
By Proposition~\ref{threeprops}(ii),
$$A^{-1}=A^{\dag}+N_{\ran A}.$$
This gives
$$f^*(y)=\begin{cases}
\frac{1}{2}\langle y-b,A^{\dag}(y-b)\rangle-c, &\mbox{if }y-b\in\ran A,\\
\infty, &\mbox{if }y-b\not\in\ran A.
\end{cases}
$$
\end{proof}

\begin{rem}
In \cite[Example 11.10]{rockwets}, the authors assume that
$A\in \R^{n\times n}$, i.e., $A$ is a linear operator. In Example \ref{rockext}, $A$ is a linear relation.
\end{rem}

\subsection{Characterizations of Moreau envelopes}

In this section, we present several useful properties of Moreau envelopes of convex functions. We identify the form of the Moreau envelope for quadratic functions, and provide a characterization of Moreau envelopes that involves Lipschitz continuity. This leads to a sum rule for Moreau envelopes of convex functions. Then we follow up with one of the main results of this paper; Theorem \ref{nonthm} is a characterization relating generalized linear-quadratic functions to nonexpansive mappings.
\begin{prop}\label{1lip}
Let $f\in\Gamma_0(\R^n).$ Then $f=e_rg$ for some $g\in\Gamma_0(\R^n)$ if and only if $\nabla f$ is $r$-Lipschitz.
\end{prop}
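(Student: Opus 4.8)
The plan is to prove both directions using the resolvent calculus already assembled. For the forward direction, suppose $f = e_r g$ with $g \in \Gamma_0(\R^n)$. By Proposition \ref{nablaprop}(ii) together with Fact \ref{resolventfact}, $\nabla f(x) = \nabla e_r g(x) = r\left[x - \Prox_g^r(x)\right] = r\left(\Id - J_{\partial g / r}\right)(x)$. By Fact \ref{rockwets12.14}, $\Id - (\Id + A)^{-1} = (\Id + A^{-1})^{-1}$ with $A = \partial g / r$, so $\Id - J_{\partial g/r} = \left(\Id + (\partial g / r)^{-1}\right)^{-1}$. Since $\partial g$ is monotone (indeed maximally monotone), so is $(\partial g / r)^{-1}$, and hence by Fact \ref{rockwets12.12} the map $\left(\Id + (\partial g / r)^{-1}\right)^{-1}$ is nonexpansive with full domain. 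Therefore $\nabla f = r \cdot (\text{nonexpansive map})$ is $r$-Lipschitz.

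For the converse, suppose $\nabla f$ is $r$-Lipschitz. Then $\frac{1}{r}\nabla f$ is nonexpansive, and I want to realize $\frac{1}{r}\nabla f$ as $\Id - J_{B}$ for some maximally monotone $B$, which I can then unwind to produce $g$. The key tool is the Baillon--Haddad theorem (Fact \ref{baillon}): since $f$ is convex and $\mathcal{C}^1$ (it is finite-valued because $\nabla f$ is globally Lipschitz, hence $\dom f = \R^n$) with $\nabla f$ being $r$-Lipschitz, the map $\frac{1}{r}\nabla f$ is firmly nonexpansive and $1$-Lipschitz. Set $T = \Id - \frac{1}{r}\nabla f$; then $T$ is also firmly nonexpansive (the complement of a firmly nonexpansive map on $\R^n$ is firmly nonexpansive), and in fact $T = \nabla\left(q - \frac{1}{r}f\right)$ is a gradient map. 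I claim $T$ is a proximal mapping: by Fact \ref{baillon} applied to $\frac{1}{r}\nabla f = \nabla\left(\frac{1}{r}f\right)$ we directly get $\frac{1}{r}\nabla f = \Prox_h^1$ for some $h \in \Gamma_0(\R^n)$; alternatively, invoke Fact \ref{cyclic}, checking that $T = \Id - \frac{1}{r}\nabla f$ satisfies the cyclic-sum inequality because $\frac{1}{r}\nabla f$ is itself a gradient of a convex function. Either way, one obtains a maximally monotone (indeed maximally cyclically monotone) $B$ with $\left(\Id + B\right)^{-1} = T = \Id - \frac{1}{r}\nabla f$.

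From here, run the forward computation in reverse. Fact \ref{rockwets12.14} gives $\frac{1}{r}\nabla f = \Id - T = \Id - (\Id + B)^{-1} = (\Id + B^{-1})^{-1}$; by Fact \ref{maxcyc} write $B^{-1} = \partial \tilde g$ for some $\tilde g \in \Gamma_0(\R^n)$ (using that $B^{-1}$ is maximally cyclically monotone when $B$ is). Then $\frac{1}{r}\nabla f = \left(\Id + \partial \tilde g\right)^{-1} = \Prox_{\tilde g}^1$, so by Proposition \ref{nablaprop}(ii) with $r = 1$, $\nabla f = r\left(\Id - \Prox_{\tilde g}^1\right) = \nabla e_r(\tilde g/\,\cdot\,)$ — more carefully, set $g$ so that $\partial g / r = B^{-1}$, i.e. $g = r\tilde g$; then $\nabla e_r g = r\left(\Id - J_{\partial g/r}\right) = r\left(\Id - (\Id + B^{-1})^{-1}\right) = r \cdot \frac{1}{r}\nabla f = \nabla f$. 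Since $e_r g$ and $f$ are both convex with equal gradients on all of $\R^n$, they differ by a constant; absorbing that constant into $g$ via Proposition \ref{calcenv}(i) yields $f = e_r g$.

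The main obstacle is the converse direction, specifically the step that upgrades ``$\frac{1}{r}\nabla f$ is nonexpansive and a gradient'' to ``$\Id - \frac{1}{r}\nabla f$ is a genuine resolvent of a maximally monotone operator.'' Nonexpansiveness alone is not enough — one needs firm nonexpansiveness, which is exactly what Baillon--Haddad supplies from convexity plus Lipschitz gradient, and then one needs the cyclic monotonicity (equivalently, that the relevant operator is a subdifferential) to guarantee $g$ can be chosen in $\Gamma_0(\R^n)$ rather than merely as a maximally monotone operator. I expect the cleanest route is to lean entirely on Fact \ref{baillon}, which packages precisely this chain of implications, and then to bookkeep the prox-parameter scaling and the additive constant carefully.
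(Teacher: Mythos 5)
Your forward direction is correct and is essentially the paper's argument: the paper rewrites $\Id-\Prox_g^r$ via Proposition \ref{nablaprop}(v) as a (scaled) proximal mapping of $g^*$ and invokes nonexpansiveness, while you use Fact \ref{rockwets12.14} and Fact \ref{rockwets12.12}; these are the same computation. The architecture of your converse also matches the paper (Baillon--Haddad for firm nonexpansiveness, cyclic monotonicity to land on a subdifferential, then unwind and absorb a constant). However, the final identification of $g$ contains a genuine error. You set $(\Id+B)^{-1}=T=\Id-\frac{1}{r}\nabla f$ and then choose $\partial g/r=B^{-1}$. With that choice, $J_{\partial g/r}=(\Id+B^{-1})^{-1}=\Id-(\Id+B)^{-1}=\Id-T=\frac{1}{r}\nabla f$, so
$$\nabla e_rg=r\bigl(\Id-J_{\partial g/r}\bigr)=r\Bigl(\Id-\tfrac{1}{r}\nabla f\Bigr)=r\Id-\nabla f,$$
which is not $\nabla f$ unless $\nabla f=\tfrac{r}{2}\Id.$ The chain ``$r(\Id-(\Id+B^{-1})^{-1})=r\cdot\frac{1}{r}\nabla f$'' silently replaces $(\Id+B^{-1})^{-1}$ by its complement, contradicting your own earlier line $(\Id+B^{-1})^{-1}=\frac{1}{r}\nabla f.$ (The preceding sentence, ``$\frac{1}{r}\nabla f=\Prox_{\tilde g}^1$, so $\nabla f=r(\Id-\Prox_{\tilde g}^1)$,'' has the same complement confusion.) The correct choice is $\partial g/r=B$, since what you need is $\Prox_g^r=J_{\partial g/r}=T=\Id-\frac{1}{r}\nabla f$, which then gives $\nabla e_rg=r(\Id-\Prox_g^r)=\nabla f.$

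Equivalently, in terms of the function $h\in\Gamma_0(\R^n)$ that Baillon--Haddad hands you with $\frac{1}{r}\nabla f=\Prox_h^1=(\Id+\partial h)^{-1}$: since $B=T^{-1}-\Id=(\partial h)^{-1}=\partial h^*,$ your recipe returns $g=rh$ (up to a constant) when the answer is $g=rh^*$ (up to a constant). This is exactly what the paper obtains by a slightly different unwinding: it inverts $\nabla f=r(\Id+\partial h)^{-1}$ to get $\partial f^*=(\nabla f)^{-1}=\frac{\Id}{r}+\partial h\left(\frac{\cdot}{r}\right),$ integrates to $f^*=\frac{q}{r}+rh\left(\frac{\cdot}{r}\right)+c,$ and conjugates to $f=(rq)\oblong(rh^*-c)=e_r(rh^*-c).$ The repair to your argument is one line, and everything else you assembled --- including the use of Fact \ref{cyclic} and Fact \ref{maxcyc} to guarantee $B$ is a subdifferential, and the treatment of the additive constant --- is sound.
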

\begin{proof}
$(\Rightarrow)$ Let $f=e_rg$ for some $g\in\Gamma_0(\R^n).$ Then by Proposition \ref{nablaprop}(ii) we have $\nabla f=r(\Id-\Prox_g^r).$ Let $x,y\in\R^n.$ Then
\begin{align*}
\|\nabla f(x)-\nabla f(y)\|&=r\|x-\Prox_g^r(x)-y+\Prox_g^r(y)\|&&\\
&=r\left\|\frac{1}{r}\Prox_{g^*}^{\frac{1}{r}}(rx)-\frac{1}{r}\Prox_{g^*}^{\frac{1}{r}}(ry)\right\|&&\mbox{(Proposition \ref{nablaprop}(v))}\\
&=\left\|\Prox_{g^*}^{\frac{1}{r}}(rx)-\Prox_{g^*}^{\frac{1}{r}}(ry)\right\|\\
&=\|J_{r\partial g^*}(rx)-J_{r\partial g^*}(ry)\|&&\mbox{(Fact \ref{resolventfact})}\\
&\leq\|rx-ry\|=r\|x-y\|.
\end{align*}
Therefore, $\nabla f$ is $r$-Lipschitz.\\
$(\Leftarrow)$ Let $\nabla f$ be $r$-Lipschitz. Then by Fact \ref{baillon}, $\frac{1}{r}\nabla f$ is firmly nonexpansive. By Fact \ref{nonexpequivmono}, $\frac{1}{r}\nabla f=(\Id+A)^{-1}$ for some $A$ monotone. Since $f\in\Gamma_0(\R^n),$ $A$ is in fact maximally cyclically monotone by Fact \ref{cyclic}. Thus, $A=\partial g$ for some $g\in\Gamma_0(\R^n)$ by Fact \ref{maxcyc}. Hence,
$$\nabla f=r(\Id+\partial g)^{-1}=\left[(\Id+\partial g)\circ\left(\frac{\Id}{r}\right)\right]^{-1}.$$
Then we have
\begin{align*}
\partial f^*=(\nabla f)^{-1}&=(\Id+\partial g)\left(\frac{\Id}{r}\right)\\
&=\frac{\Id}{r}+\partial g\left(\frac{\Id}{r}\right),
\end{align*}
so that $$f^*=\frac{q}{r}+rg\left(\frac{\cdot}{r}\right)+c,~c\in\R.$$ Taking the conjugate of both sides yields
\begin{align*}
f&=\left[\frac{q}{r}+rg\left(\frac{\cdot}{r}\right)+c\right]^*\\
&=\left(\frac{q}{r}\right)^*\oblong\left[rg\left(\frac{\cdot}{r}\right)+c\right]^*\\
&=(rq)\oblong(rg^*-c)\\
&=e_r(rg^*-c),
\end{align*}
where $g^*\in\Gamma_0(\R^n).$
\end{proof}
\begin{cor}\label{r1r2}
Let $r_1,r_2>0,$ $g,h\in\Gamma_0(\R^n).$ Then
\begin{equation}\label{r1plusr2}
e_{r_1}g+e_{r_2}h=e_{r_1+r_2}f
\end{equation}
for some $f\in\Gamma_0(\R^n).$ Specifically,
$$f(x)=\sup\limits_{v\in\R^n}\left\{\left[e_{r_1}g(x+v)-r_1q(v)\right]+\left[e_{r_2}h(x+v)-r_2q(v)\right]\right\}.$$
\end{cor}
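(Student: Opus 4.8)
The plan is to obtain the existence of $f$ directly from Proposition~\ref{1lip} and then to extract the explicit formula by a completing-the-square and biconjugation computation.

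\textbf{Step 1 (existence of $f$).} By Proposition~\ref{nablaprop}, $e_{r_1}g$ and $e_{r_2}h$ are convex, finite-valued and continuously differentiable, so $u:=e_{r_1}g+e_{r_2}h\in\Gamma_0(\R^n)$. Since $e_{r_1}g$ is the Moreau envelope of $g\in\Gamma_0(\R^n)$, the forward implication of Proposition~\ref{1lip} gives that $\nabla e_{r_1}g$ is $r_1$-Lipschitz, and likewise $\nabla e_{r_2}h$ is $r_2$-Lipschitz; hence $\nabla u=\nabla e_{r_1}g+\nabla e_{r_2}h$ is $(r_1+r_2)$-Lipschitz by the triangle inequality. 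Applying the converse implication of Proposition~\ref{1lip} with prox-parameter $r_1+r_2$ produces $f\in\Gamma_0(\R^n)$ with $u=e_{r_1+r_2}f$, which is exactly \eqref{r1plusr2}.

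\textbf{Step 2 (the formula).} Put $r:=r_1+r_2$ and fix $x\in\R^n$. Since $u=e_{r_1}g+e_{r_2}h$ and $rq(v)=r_1q(v)+r_2q(v)$, it suffices to show $f(x)=\sup_{v\in\R^n}\{u(x+v)-rq(v)\}$. Set $\tilde f:=f(x+\cdot)+rq$, which lies in $\Gamma_0(\R^n)$ because $f$ does. Substituting $z=x+w$ in $e_rf(x+v)=\inf_z\{f(z)+rq(z-x-v)\}$ and expanding the square gives
$$e_rf(x+v)-rq(v)=\inf_w\{\tilde f(w)-\langle rv,w\rangle\}=-\tilde f^*(rv)\qquad(v\in\R^n).$$
Taking the supremum over $v$ and using that $v\mapsto rv$ is a bijection of $\R^n$,
$$\sup_{v\in\R^n}\{u(x+v)-rq(v)\}=-\inf_{w^*\in\R^n}\tilde f^*(w^*)=\tilde f^{**}(0)=\tilde f(0)=f(x),$$
where the last two equalities use $\tilde f\in\Gamma_0(\R^n)$, hence $\tilde f^{**}=\tilde f$.

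\textbf{Expected obstacle.} The substantive step is Step 2; the auxiliary function $\tilde f=f(x+\cdot)+rq$ is the device that makes it routine, and the only genuine care-point is the bookkeeping that keeps $\tilde f$ (and the intermediate functions) inside $\Gamma_0(\R^n)$ so that biconjugation acts as the identity. An essentially equivalent derivation of the formula is to apply Proposition~\ref{calcenv}(vi) to $u=e_rf$, obtaining $f^*=u^*-q/r$, hence $f=(u^*-q/r)^*$, and to recognize the right-hand side as the Attouch-Wets and Hiriart-Urruty deconvolution of $u$ by $rq$, whose standard supremum representation yields the same answer. One may also record that $f$ is uniquely determined by \eqref{r1plusr2}, since $e_r(\cdot)$ is injective on $\Gamma_0(\R^n)$ for $r>0$.
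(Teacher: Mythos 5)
Your proof is correct. Step 1 is exactly the paper's argument: apply Proposition \ref{1lip} forward to get that $\nabla e_{r_1}g$ and $\nabla e_{r_2}h$ are $r_1$- and $r_2$-Lipschitz, add, and apply the converse with parameter $r_1+r_2$. Step 2 is where you genuinely diverge. The paper conjugates \eqref{r1plusr2} via Proposition \ref{calcenv}(vi) to obtain $f=\bigl[(e_{r_1}g+e_{r_2}h)^*-q/(r_1+r_2)\bigr]^*$ and then invokes Toland--Singer duality for the conjugate of a difference as a black box to produce the supremum formula. You instead prove the needed instance of that duality from scratch: completing the square inside $e_rf(x+v)=\inf_w\{f(x+w)+rq(w-v)\}$ gives $e_rf(x+v)-rq(v)=-\tilde f^*(rv)$ with $\tilde f=f(x+\cdot)+rq\in\Gamma_0(\R^n)$, and the supremum over $v$ becomes $\tilde f^{**}(0)=\tilde f(0)=f(x)$; I checked the algebra and the bookkeeping ($\tilde f$ proper, convex, lsc, so biconjugation is the identity) and it all holds. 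The paper's route is shorter given the external citation and slots the result into the Hiriart--Urruty deconvolution framework used elsewhere in the paper (indeed the remark following the corollary, and your own closing aside, make that connection explicit); your route is self-contained, uses nothing beyond the definition of the conjugate and the Fenchel--Moreau theorem, and makes visible exactly why the formula is the deconvolution of $e_{r_1+r_2}f$ by $(r_1+r_2)q$. Your final observation that $f$ is unique follows, as you suggest, from the injectivity of $e_r$ on $\Gamma_0(\R^n)$, which is itself immediate from Proposition \ref{calcenv}(vi).
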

\begin{proof}
Denote $e_{r_1}g,e_{r_2}h$ by $\bar{g},\bar{h},$ respectively. Then by Proposition \ref{1lip}, $\nabla\bar{g}$ is $r_1$-Lipschitz and $\nabla\bar{h}$ is $r_2$-Lipschitz. Hence, $\nabla\bar{f}$ is ($r_1+r_2$)-Lipschitz, where $\bar{f}=\bar{g}+\bar{h}=e_{r_1}g+e_{r_2}h.$ Applying Proposition \ref{1lip} again, we have that $\bar{f}=e_{r_1+r_2}f$ for some $f\in\Gamma_0(\R^n).$ Now to find $f,$ we apply the Fenchel conjugate to \eqref{r1plusr2}:
\begin{align*}
f^*+\frac{q}{r_1+r_2}&=(e_{r_1}g+e_{r_2}h)^*\\
f&=\left[(e_{r_1}g+e_{r_2}h)^*-\frac{q}{r_1+r_2}\right]^*.
\end{align*}
By Toland-Singer duality for the conjugate of a difference \cite[Corollary 14.19]{convmono}, we obtain that for every $x\in\R^n,$
\begin{align*}
f(x)&=\sup\limits_{v\in\R^n}\left[(e_{r_1}g+e_{r_2}h)^{**}(x+v)-\left(\frac{q}{r_1+r_2}\right)^*(v)\right]\\
&=\sup\limits_{v\in\R^n}\left[(e_{r_1}g+e_{r_2}h)(x+v)-(r_1+r_2)q(v)\right]\\
&=\sup\limits_{v\in\R^n}\left\{\left[e_{r_1}g(x+v)-r_1q(v)\right]+\left[e_{r_2}h(x+v)-r_2q(v)\right]\right\}.
\end{align*}
\end{proof}
\begin{rem}
Corollary \ref{r1r2} gives us that for $r>0$ and $f\in\Gamma_0(\R^n),$
$$f=\left[(e_rf)^*-\frac{q}{r}\right]^*.$$ Therefore, by Toland-Singer duality, for every $x\in\R^n$ we have
$$f(x)=\sup\limits_{v\in\R^n}[e_rf(x+v)-rq(v)].$$This is the Hiriart-Urruty deconvolution \cite{hiriart1994deconvolution}.
\end{rem}
\begin{prop}\label{equivalent}
Let $A\in S_+^n.$ Then the following are equivalent:
\begin{itemize}
\item[(i)] $A$ is nonexpansive, i.e. $\|Ax-Ay\|\leq\|x-y\|$ for all $x,y\in\R^n;$
\item[(ii)] $A$ is firmly nonexpansive, i.e. $\|Ax-Ay\|^2\leq\langle x-y,Ax-Ay\rangle$ for all $x,y\in\R^n;$
\item[(iii)] $A=(P+\Id)^{-1}$ for some maximally monotone linear relation $P.$
\end{itemize}
\end{prop}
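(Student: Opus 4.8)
The plan is to establish the two equivalences (i)$\Leftrightarrow$(ii) and (ii)$\Leftrightarrow$(iii) separately, since each rests on a different tool from the preliminaries. Throughout, recall that $A\in S_+^n$ is single-valued with $\dom A=\R^n$, and that (since $A$ is linear) $Ax-Ay=A(x-y)$, so (i) and (ii) are really statements about a single vector.

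For (i)$\Leftrightarrow$(ii) I would take $\varphi=q_A$, which is convex (as $A\in S_+^n$), of class $\mathcal{C}^1$, with $\nabla\varphi=A$ because $A$ is symmetric. If (i) holds then $A$ is $1$-Lipschitz, so the Baillon--Haddad theorem (Fact \ref{baillon}) applied with $L=1$ yields $\langle Ax-Ay,x-y\rangle\geq\|Ax-Ay\|^2$ for all $x,y\in\R^n$, which is exactly (ii). Conversely, if (ii) holds, then Cauchy--Schwarz gives $\|Ax-Ay\|^2\leq\langle x-y,Ax-Ay\rangle\leq\|x-y\|\,\|Ax-Ay\|$, and dividing by $\|Ax-Ay\|$ (the case $Ax=Ay$ being trivial) yields (i). Alternatively, diagonalizing $A$ shows that both (i) and (ii) say precisely that every eigenvalue of $A$ lies in $[0,1]$: (i) reads $\lambda\le 1$ and (ii) reads $A^{2}\preceq A$, i.e.\ $\lambda^{2}\le\lambda$.

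For (ii)$\Rightarrow$(iii) I would apply Fact \ref{nonexpequivmono} with $D=\R^n$ and with $A$ itself in the role of the single-valued mapping $T$, and set $P:=A^{-1}-\Id$, where $A^{-1}$ denotes the set-valued inverse; since $A$ is firmly nonexpansive, Fact \ref{nonexpequivmono} makes $P$ monotone. Because $\gra A$, hence $\gra A^{-1}$, is a subspace and passing from $\gra A^{-1}$ to $\gra P$ is an invertible linear change of coordinates, $P$ is a linear relation. Moreover $(P+\Id)^{-1}=(A^{-1})^{-1}=A$ has domain $\R^n$, so Fact \ref{rockwets12.12} (with $\lambda=1$) shows $P$ is maximally monotone; thus $A=(P+\Id)^{-1}$ with $P$ a maximally monotone linear relation, which is (iii). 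For the converse (iii)$\Rightarrow$(ii), given $A=(P+\Id)^{-1}=J_P$ with $P$ a maximally monotone linear relation, Fact \ref{rockwets12.12} guarantees $J_P$ is single-valued on all of $\R^n$; taking $T=A$ in Fact \ref{nonexpequivmono} and using $T^{-1}-\Id=(P+\Id)-\Id=P$, which is monotone, we conclude that $A$ is firmly nonexpansive, i.e.\ (ii).

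The main obstacle is the careful bookkeeping for the set-valued inverse in the (ii)$\Leftrightarrow$(iii) part: one must check that $P=A^{-1}-\Id$ is genuinely a linear relation, that inverting twice recovers the original relation so that $(P+\Id)^{-1}=A$ on the nose, and that the domain hypothesis in Fact \ref{rockwets12.12} is nothing more than $\dom A=\R^n$. None of this is deep, but it is where the argument needs attention; the smoothness and symmetry hypotheses make the Baillon--Haddad step, and hence (i)$\Leftrightarrow$(ii), essentially immediate.
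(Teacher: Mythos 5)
Your proof is correct. For (ii)$\Leftrightarrow$(iii) you follow essentially the same route as the paper --- Fact \ref{nonexpequivmono} applied to $T=A$ with $P=A^{-1}-\Id$ --- but you are in fact more careful than the paper on one point: Fact \ref{nonexpequivmono} as stated only delivers that $P$ is \emph{monotone}, and maximality has to be extracted from $\dom(\Id+P)^{-1}=\dom A=\R^n$ via Fact \ref{rockwets12.12}, a step you make explicit and the paper leaves implicit. For (i)$\Leftrightarrow$(ii) the paper argues purely spectrally: it diagonalizes $A$ and shows that (i) and (ii) are each equivalent to the eigenvalue condition $0\le\lambda_i\le1$ (via $\Id-A^2\succeq0$ and $A-A^2\succeq0$ respectively, using $\lambda_i\ge0$). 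Your primary argument instead runs through the Baillon--Haddad theorem (Fact \ref{baillon}) with $\varphi=q_A$ and $L=1$ for (i)$\Rightarrow$(ii), and Cauchy--Schwarz for (ii)$\Rightarrow$(i); this is equally valid, uses the hypotheses only where they are genuinely needed (symmetry to get $\nabla q_A=A$, positive semidefiniteness for convexity), and the Cauchy--Schwarz half holds for arbitrary mappings. Your parenthetical eigenvalue remark is precisely the paper's argument. Both approaches are sound: the spectral one is more elementary and self-contained, while yours ties the equivalence to the resolvent and Baillon--Haddad machinery that the surrounding results (Proposition \ref{1lip}, Theorem \ref{nonthm}) rely on anyway.
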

\begin{proof} Denote the eigenvalues of $A$ as $\lambda_1,\lambda_2,\ldots,\lambda_n.$ Since $A\in S_+^n,$ all eigenvalues are real and nonnegative (see \cite{matanalapp} Section 7.6). \\

\noindent(i)$\Leftrightarrow$(ii) Suppose that statement (i) is true. Then, letting $z=x-y$ and squaring both sides, we have
\begin{align*}
&&\|Az\|^2&\leq\|z\|^2\\
&\Leftrightarrow&\langle z,A^\top Az\rangle&\leq\langle z,z\rangle\\
&\Leftrightarrow&\langle z,A^2z\rangle&\leq\langle z,z\rangle\\
&\Leftrightarrow&\langle z,(\Id-A^2)z\rangle&\geq0\mbox{ for all }z\in\R^n.
\end{align*}
 The inequality above is equivalent to the statement $\Id-A^2\in S_+^n,$ so $1-\lambda_i^2\geq0$ for all $i\in\{1,2,\ldots,n\}.$ Since $A\in S_+^n,$ we have $\lambda_i\geq0$ for all $i.$ Hence, statement (i) is equivalent to the following:
\begin{equation}\label{eq:lambdas}
0\leq\lambda_i\leq1\mbox{ for all }i\in\{1,2,\ldots,n\}.
\end{equation}
Now suppose that statement (ii) is true. This gives
\begin{align*}
&&\langle z,A^\top Az\rangle&\leq\langle z,Az\rangle\\
&\Leftrightarrow&\langle z,A^2z\rangle&\leq\langle z,Az\rangle\\
&\Leftrightarrow&\langle z,(A-A^2)z\rangle&\geq0\mbox{ for all }z\in\R^n.
\end{align*}
Then $(\lambda_i-\lambda_i^2)\geq0\Rightarrow\lambda_i(1-\lambda_i)\geq0$ for all $i\in\{1,2,\ldots,n\},$ so that $0\leq\lambda_i\leq1.$ Hence, statement (ii) is equivalent to \eqref{eq:lambdas}.\\

\noindent(ii)$\Leftrightarrow$(iii) Suppose that statement (ii) is true. Then Fact \ref{nonexpequivmono} gives us that $A=(\Id+P)^{-1}$ for some maximally monotone relation $P.$ Since $A$ is a matrix, we have that $A$ is linear, so that $A^{-1}$ is a linear relation. Note that the matrix inverse of $A$ may not exist; here we are referring to the general set-valued inverse of $A.$ Then we have $\Id+P=A^{-1}$ $\Rightarrow P=A^{-1}-\Id,$ so that $P$ is a linear relation. Thus statement (ii) implies statement (iii). Conversely, supposing that statement (iii) is true and applying Fact \ref{nonexpequivmono}, statement (ii) is immediately implied.
\end{proof}
Proposition \ref{equivalent} will allow us to prove Theorem \ref{nonthm}, one of the main results of this paper. Existence of a Moreau envelope is closely tied to nonexpansiveness, as the following proposition and example demonstrate, and Theorem \ref{nonthm} ultimately concludes.
\begin{prop}\label{notnon}
If $A\in S_+^n$ is not nonexpansive, then $f(x)=\frac{r}{2}\langle x,Ax\rangle+\langle b,x\rangle+c$ is not the Moreau envelope with prox-parameter $r$ of a proper, lsc, convex function.
\end{prop}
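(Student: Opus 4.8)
The plan is to reduce this to Proposition~\ref{1lip} via a scaling argument. Suppose, for contradiction, that $f(x)=\frac{r}{2}\langle x,Ax\rangle+\langle b,x\rangle+c=e_rg$ for some $g\in\Gamma_0(\R^n)$. Since $f$ is a finite convex quadratic, it is $\mathcal{C}^1$ with $\nabla f(x)=rAx+b$, which is $(r\|A\|)$-Lipschitz where $\|A\|$ denotes the spectral norm (equivalently the largest eigenvalue of $A$, since $A\in S_+^n$). By Proposition~\ref{1lip}, $f=e_rg$ forces $\nabla f$ to be $r$-Lipschitz, i.e. $r\|A\|\le r$, hence $\|A\|\le 1$. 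But $\|Ax-Ay\|=\|A(x-y)\|\le\|A\|\,\|x-y\|$, so $\|A\|\le 1$ says exactly that $A$ is nonexpansive, contradicting the hypothesis that $A$ is not nonexpansive.

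Concretely, the steps I would carry out are: (i) write down $\nabla f(x)=rAx+b$ and identify its (smallest) Lipschitz constant as $r\lambda_{\max}(A)$, using $A\in S_+^n$ so that the operator norm coincides with the largest eigenvalue; (ii) invoke Proposition~\ref{1lip} in the contrapositive direction — if $f=e_rg$ for some $g\in\Gamma_0(\R^n)$ then $\nabla f$ must be $r$-Lipschitz — to obtain $r\lambda_{\max}(A)\le r$, i.e. $\lambda_{\max}(A)\le 1$; (iii) observe that $\lambda_{\max}(A)\le 1$ is equivalent to $\|Ax-Ay\|\le\|x-y\|$ for all $x,y$, i.e. to nonexpansiveness of $A$ (this is part of the equivalence recorded in Proposition~\ref{equivalent}(i)); (iv) conclude that assuming $A$ is not nonexpansive yields a contradiction, so no such $g$ exists.

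There is essentially no hard part here — the statement is a direct corollary of Proposition~\ref{1lip} combined with the eigenvalue characterization of nonexpansiveness for positive semidefinite matrices. The only point requiring a line of care is the identification of the Lipschitz constant of the affine map $x\mapsto rAx+b$ with $r$ times the spectral norm of $A$, and the remark that for $A\in S_+^n$ the spectral norm equals $\lambda_{\max}(A)$, so ``$\nabla f$ is $r$-Lipschitz'' and ``$A$ is nonexpansive'' are literally the same condition. One could alternatively phrase the whole argument directly in terms of Proposition~\ref{equivalent}: $f=e_rg$ would make $\nabla f=r(\Id-\Prox_g^r)$, and $\Id-\Prox_g^r$ is firmly nonexpansive, forcing $A=\Id-\Prox_g^r$ to be firmly nonexpansive (hence nonexpansive), again contradicting the hypothesis. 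Either route is short; I would present the Proposition~\ref{1lip} version for brevity.
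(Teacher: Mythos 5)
Your proposal is correct and follows essentially the same route as the paper: assume $f=e_rg$, invoke Proposition~\ref{1lip} to conclude $\nabla f(x)=rAx+b$ is $r$-Lipschitz, and read off that $A$ must then be nonexpansive, contradicting the hypothesis. The only cosmetic difference is that you pass through the spectral norm $\lambda_{\max}(A)\le 1$, whereas the paper cancels the factor $r$ directly in the inequality $\|rAx-rAy\|\le r\|x-y\|$; the substance is identical.
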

\begin{proof}
Suppose that $A$ is not nonexpansive. Then
\begin{equation}\label{contr}
\exists~x,y\in\R^n\mbox{ such that }\|Ax-Ay\|>\|x-y\|.
\end{equation}
Suppose that $f$ is the Moreau envelope with prox-parameter $r$ of some $g\in\Gamma_0(\R^n).$ Then by Theorem \ref{1lip}, $\nabla f$ is $r$-Lipschitz. That is, for all $x,y\in\R^n,$ we have
\begin{align*}
\|\nabla f(x)-\nabla f(y)\|&\leq r\|x-y\|\\
\|(rAx+b)-(rAy+b)\|&\leq r\|x-y\|\\
\|Ax-Ay\|&\leq\|x-y\|.
\end{align*}
This is a contradiction to \eqref{contr}. Therefore, $f$ is not the Moreau envelope with prox-parameter $r$ of any $g\in\Gamma_0(\R^n).$
\end{proof}
\begin{ex}\label{threes}
Let $A=\left[\begin{array}{c c}3&0\\0&3\end{array}\right],$ $f(x)=\frac{1}{2}\langle x,Ax\rangle.$ Then there does not exist $g\in\Gamma_0(\R^2)$ such that $f(x)=e_1g(x).$ However, $f(x)=e_3g(x),$ where $g(x)=\frac{1}{2}\langle x,x\rangle.$
\end{ex}
\begin{proof}
Using prox-parameter $r=1,$ we know that there cannot exist $g$ with $f=e_1g$ as a direct consequence of Proposition \ref{notnon}, since $A$ is not nonexpansive. However, rearranging the expression as $f(x)=\frac{3}{2}\langle x,\Id x\rangle$ gives a larger prox-parameter $\tilde{r}=3$ and a nonexpansive matrix $\Id,$ so there does exist $g\in\Gamma_0(\R^2)$ such that $f(x)=e_3g(x).$
\end{proof}

\subsubsection{The Moreau envelope $e_{r}f$ is linear-quadratic $\Leftrightarrow$ $f$ is generalized linear-quadratic}

In this section, we present the remaining main result of the paper, a characterization of when a convex function is a generalized linear-quadratic. It has to do with convex Moreau envelopes, and we begin with a theorem that explicitly determines the Moreau envelope of a generalized linear-quadratic function.
\begin{thm}\label{generf}
Let $A:\R^n\rightrightarrows\R^n$ be a maximally monotone symmetric linear relation. Let $a,b\in\R^n,$ $c\in\R,$ $r>0.$ Define the generalized linear-quadratic function
$$f(x)=\frac{r}{2}\langle x-a,A(x-a)\rangle+\langle b,x\rangle+c.$$ Then for every $x\in\R^n,$
$$e_rf(x)=rq_{(\Id+A^{-1})^{-1}}\left(x-a-\frac{b}{r}\right)+\langle b,x\rangle-\frac{1}{r}q(b)+c.$$
\end{thm}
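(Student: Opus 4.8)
The plan is to reduce the statement to the single identity $e_1q_A=q_{(\Id+A^{-1})^{-1}}$ by stripping off the affine data with the Moreau-envelope calculus of Proposition~\ref{calcenv}, and then to establish that identity using the conjugacy formula for $q_A$ together with the sum and difference rules of Proposition~\ref{p:difference}. Note first that $f$ is proper, lsc and convex: writing $f=q_{rA}(\cdot-a)+\langle b,\cdot\rangle+c$, the function $q_{rA}$ is lsc convex by Proposition~\ref{qastartheorem} and proper since $0\in A0$ forces $q_{rA}(0)=0$.

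First I would rescale. Dividing by $r$ gives $f/r=q_A(\cdot-a)+\langle\cdot,b/r\rangle+c/r$, so Proposition~\ref{calcenv}(ii) gives $e_rf=r\,e_1(f/r)$ and it suffices to compute $e_1(f/r)$. Now Proposition~\ref{calcenv}(i) removes the constant $c/r$, Proposition~\ref{calcenv}(v) with $v=b/r$ absorbs the linear term, and Proposition~\ref{calcenv}(iii) handles the translation by $a$. Carrying these out in order yields
$$e_1(f/r)(x)=(e_1q_A)\left(x-a-\frac{b}{r}\right)+\left\langle x,\frac{b}{r}\right\rangle-q\left(\frac{b}{r}\right)+\frac{c}{r},$$
hence $e_rf(x)=r\,(e_1q_A)\left(x-a-\frac{b}{r}\right)+\langle b,x\rangle-r\,q(b/r)+c$. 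Since $r\,q(b/r)=\frac{1}{r}q(b)$, this matches the claimed formula provided $e_1q_A=q_{(\Id+A^{-1})^{-1}}$.

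To finish I would compute $e_1q_A$. By Proposition~\ref{calcenv}(iv), $e_1q_A=q-(q_A+q)^*$. Because $\Id$ is a maximally monotone symmetric linear relation and $q=q_{\Id}$, the sum rule of Proposition~\ref{p:difference} gives $q_A+q=q_{A+\Id}$, and $A+\Id$ is again a maximally monotone symmetric linear relation by Lemma~\ref{lem:sumsym}; thus Proposition~\ref{qastartheorem} yields $(q_A+q)^*=q_{(A+\Id)^{-1}}$, so $e_1q_A=q_{\Id}-q_{(\Id+A)^{-1}}$. To apply the difference rule of Proposition~\ref{p:difference} with $B_1=\Id$ and $B_2=(\Id+A)^{-1}$, I would note that $\dom\Id=\R^n=\dom(\Id+A)^{-1}$ by Fact~\ref{rockwets12.12} (since $A$ is maximally monotone), and that $\Id-(\Id+A)^{-1}=(\Id+A^{-1})^{-1}$ by Fact~\ref{rockwets12.14}, which is monotone by Fact~\ref{rockwets12.12} applied to the monotone relation $A^{-1}$. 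Hence $e_1q_A=q_{\Id-(\Id+A)^{-1}}=q_{(\Id+A^{-1})^{-1}}$, and substituting this into the expression for $e_rf$ completes the proof.

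The obstacle here is organizational rather than conceptual: one must apply the perturbation rules of Proposition~\ref{calcenv} in the correct order, and in particular recognize that part~(v) is stated only for prox-parameter $1$, which is exactly why the rescaling step must come first; and one must verify the structural hypotheses — maximal monotonicity, symmetry, linearity, and the domain inclusion $\dom\Id\subseteq\dom(\Id+A)^{-1}$ — needed to invoke the sum rule, the conjugacy identity $q_B^*=q_{B^{-1}}$, and especially the difference rule $q_{B_1}-q_{B_2}=q_{B_1-B_2}$, whose domain hypothesis is genuinely delicate in the relation setting.
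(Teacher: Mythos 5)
Your proposal is correct, and its outer skeleton is the same as the paper's: rescale via $e_rf=re_1(f/r)$, then strip the constant, linear and translation terms with Proposition \ref{calcenv}(i), (v), (iii), reducing everything to the single identity $e_1q_A=q_{(\Id+A^{-1})^{-1}}$. The genuine difference is that you \emph{prove} that identity, whereas the paper's proof simply substitutes it into the third line of its computation with no justification; the corollary immediately following the theorem, which states $e_1(q_A)=q_{(\Id+A^{-1})^{-1}}$, is then derived \emph{from} Theorem \ref{generf}, so within the paper the key step is effectively circular. Your derivation --- $e_1q_A=q-(q_A+q)^*=q_{\Id}-q_{(\Id+A)^{-1}}$ via the sum rule, Lemma \ref{lem:sumsym} and Proposition \ref{qastartheorem}, followed by the difference rule of Proposition \ref{p:difference} with the domain hypothesis $\dom\Id\subseteq\dom(\Id+A)^{-1}=\R^n$ verified through Fact \ref{rockwets12.12} and the resolvent identity $\Id-(\Id+A)^{-1}=(\Id+A^{-1})^{-1}$ of Fact \ref{rockwets12.14} --- is exactly the right way to close that gap, and your care with the domain condition in the difference rule (which the paper's own counterexample shows is not automatic) is the step that genuinely needs checking. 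In short: same decomposition, but your version is the more complete argument.
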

\begin{proof}
By Proposition \ref{calcenv}, we have
\begin{align*}
e_rf&=re_1(f/r)=re_1(q_A(\cdot-a)+\langle\cdot,b/r\rangle+c/r)\\
&=r[e_1(q_A(\cdot-a))(\cdot-b/r)+\langle\cdot,b/r\rangle-q(b/r)+c/r]\\
&=r[q_{(\Id+A^{-1})^{-1}}(\cdot-b/r-a)+\langle\cdot,b/r\rangle-q(b/r)+c/r]\\
&=rq_{(\Id+A^{-1})^{-1}}(\cdot-a-b/r)+\langle\cdot,b\rangle-\frac{1}{r}q(b)+c.
\end{align*}
\end{proof}
\begin{cor}
Let $A:\R^n\rightrightarrows\R^n$ be a maximally monotone symmetric linear relation. Then
\begin{itemize}
\item[(i)]$e_1(q_A)=q_{(\Id+A^{-1})^{-1}},$
\item[(ii)]$q_A=e_1g$ for some $g\in\Gamma_0(\R^n)$ if and only if $A$ is nonexpansive.
\end{itemize}
\end{cor}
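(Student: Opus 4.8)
The plan is to derive both statements directly from Theorem \ref{generf} by specializing the parameters. For part (i), I would take $a = 0$, $b = 0$, $c = 0$ and $r = 1$ in the definition of the generalized linear-quadratic function $f$. With these choices, $f(x) = \frac{1}{2}\langle x, Ax\rangle = q_A(x)$, since $A$ is assumed to be a maximally monotone symmetric linear relation so $q_A$ is well-defined by Proposition \ref{fourprops}(i). Substituting into the formula of Theorem \ref{generf}, the terms $\langle b, x\rangle$, $\frac{1}{r}q(b)$ and $c$ all vanish, and the shift $x - a - b/r$ collapses to $x$, leaving precisely $e_1(q_A)(x) = q_{(\Id+A^{-1})^{-1}}(x)$, which is (i).

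For part (ii), the forward direction follows by combining (i) with Proposition \ref{1lip} and the characterization of nonexpansiveness. If $q_A = e_1 g$ for some $g \in \Gamma_0(\R^n)$, then by Proposition \ref{1lip} the gradient $\nabla q_A$ is $1$-Lipschitz. But by Lemma \ref{symlemma}, $\partial q_A = A$; since $q_A$ is convex (Proposition \ref{fourprops}(ii)) and its subdifferential is the single-valued linear relation $A$, this forces $q_A$ to be differentiable on its domain with $\nabla q_A = A$, so $A$ is $1$-Lipschitz, i.e.\ nonexpansive. Conversely, if $A$ is nonexpansive, I would invoke part (i): $q_A = e_1 g$ where $g = q_{(\Id+A^{-1})^{-1}}^{\,\text{--}}$... more carefully, part (i) says $e_1(q_A) = q_{(\Id+A^{-1})^{-1}}$, which exhibits $q_{(\Id+A^{-1})^{-1}}$ as a Moreau envelope, not $q_A$ itself. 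So instead, for the converse I would use Proposition \ref{1lip} in the other direction: nonexpansiveness of $A = \partial q_A = \nabla q_A$ means $\nabla q_A$ is $1$-Lipschitz, hence $q_A = e_1 g$ for some $g \in \Gamma_0(\R^n)$.

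The main subtlety — really the only obstacle — is the interplay between $q_A$ being an extended-real-valued function (possibly $+\infty$ off $\dom A$, e.g.\ when $A = N_{\{0\}}$) and the differentiability hypothesis in Proposition \ref{1lip}, which is phrased for $f \in \Gamma_0(\R^n)$ with $\nabla f$ well-defined everywhere. When $\dom A \neq \R^n$, $q_A$ is not finite-valued, so one cannot literally say $\nabla q_A$ is $1$-Lipschitz on all of $\R^n$; in that case $A$ is nonexpansive as a single-valued map on $\dom A$ only if $\dom A = \R^n$ (a nonexpansive map has full domain by definition in this paper, cf.\ Fact \ref{cyclic}). So the clean statement is: $A$ nonexpansive already entails $\dom A = \R^n$, hence $q_A$ is finite, convex, differentiable with $\nabla q_A = A$ everywhere, and Proposition \ref{1lip} applies directly. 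I would make this observation explicit at the start of the proof of (ii) to keep the equivalence honest. With that remark in place, both directions are one-line applications of Proposition \ref{1lip} and Lemma \ref{symlemma}, and (i) is a pure specialization of Theorem \ref{generf}.
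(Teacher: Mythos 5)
Your proof is correct and follows essentially the same route as the paper's: part (i) is exactly the specialization of Theorem \ref{generf} to $a=b=c=0$ and $r=1$, and part (ii) is the combination of Proposition \ref{1lip} (with $r=1$) and the identification $\partial q_A=A$ from Lemma \ref{symlemma}. Your explicit observation that nonexpansiveness of $A$ (equivalently, finiteness of $e_1g$) forces $\dom A=\R^n$, so that $q_A$ is finite with $\nabla q_A=A$ everywhere and Proposition \ref{1lip} genuinely applies, is a detail the paper leaves implicit and is worth stating.
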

\begin{proof}
(i) This follows from Theorem \ref{generf} with $a=b=c=0$ and $r=1.$\\

\noindent(ii) This follows from part (i) above and Proposition \ref{1lip} with $r=1.$
\end{proof}
\begin{prop}\label{convquad}
Let $f(x)\in\Gamma_0(\R^n)$ be quadratic, i.e. $f(x)=\frac{r}{2}\langle x,Ax\rangle+\langle b,x\rangle+c,$ $A\in S_+^n,$ $b\in\R^n,$ $c\in\R,$ $r>0.$ Then $e_rf\in\Gamma_0(\R^n)$ and $e_rf$ is quadratic. Specifically,
$$e_rf(x)=\frac{r}{2}\langle x,[\Id-(\Id+A)^{-1}]x\rangle+\langle b,(\Id+A)^{-1}x\rangle-\frac{1}{2r}\langle b,(\Id+A)^{-1}b\rangle+c,$$
where $\Id-(\Id+A)^{-1}\in S_+^n.$
\end{prop}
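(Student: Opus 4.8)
The plan is to obtain this as a special case of Theorem~\ref{generf} and then simplify the resulting expression. A symmetric positive semidefinite matrix $A$ is a maximally monotone symmetric linear relation (see the example following Definition~\ref{def:symmetric}), and since $f\in\Gamma_0(\R^n)$, Proposition~\ref{nablaprop} already gives that $e_rf\in\Gamma_0(\R^n)$ is convex and continuously differentiable. Applying Theorem~\ref{generf} with $a=0$ yields, for every $x\in\R^n$,
$$e_rf(x)=rq_{(\Id+A^{-1})^{-1}}\!\left(x-\tfrac{b}{r}\right)+\langle b,x\rangle-\tfrac{1}{r}q(b)+c.$$
The first step is to identify the relation $(\Id+A^{-1})^{-1}$: because $A\in S_+^n$, the matrix $\Id+A$ is positive definite, hence invertible; write $M=(\Id+A)^{-1}$, a symmetric positive definite matrix. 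By Fact~\ref{rockwets12.14}, $(\Id+A^{-1})^{-1}=\Id-(\Id+A)^{-1}=\Id-M$, which is a genuine matrix with full domain, so $q_{\Id-M}(z)=\tfrac12\langle z,(\Id-M)z\rangle$ for all $z\in\R^n$. This is the only place where some care is needed — confirming that the $q_{(\cdot)}$ coming out of Theorem~\ref{generf} is an honest finite quadratic rather than an extended-valued function — and it follows purely from Fact~\ref{rockwets12.14} together with invertibility of $\Id+A$.

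The remaining work is routine algebra. Substituting $z=x-b/r$ and using symmetry of $\Id-M$, one expands
$$rq_{\Id-M}\!\left(x-\tfrac{b}{r}\right)=\tfrac r2\langle x,(\Id-M)x\rangle-\langle b,(\Id-M)x\rangle+\tfrac1{2r}\langle b,(\Id-M)b\rangle.$$
The cross term $-\langle b,(\Id-M)x\rangle$ combines with $\langle b,x\rangle$ to give $\langle b,Mx\rangle$, and the constants $\tfrac1{2r}\langle b,(\Id-M)b\rangle-\tfrac1{2r}q(b)\cdot 2+c=\tfrac1{2r}\langle b,(\Id-M-\Id)b\rangle+c$ collapse to $-\tfrac1{2r}\langle b,Mb\rangle+c$. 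Re-substituting $M=(\Id+A)^{-1}$ reproduces exactly the claimed formula. (Alternatively, one can bypass Theorem~\ref{generf} and complete the square directly: $f(y)+\tfrac r2\|y-x\|^2=\tfrac r2\langle y,(\Id+A)y\rangle+\langle b-rx,y\rangle+c+\tfrac r2\|x\|^2$ is strictly convex in $y$ with unique minimizer $\bar y=M(x-b/r)$, and evaluating at $\bar y$ gives the same expression.)

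Finally I would verify $\Id-(\Id+A)^{-1}\in S_+^n$: diagonalize $A=Q\Diag(\lambda_i)Q^\top$ with each $\lambda_i\ge 0$; then $\Id-M=Q\Diag\!\big(\lambda_i/(1+\lambda_i)\big)Q^\top$ is symmetric with nonnegative eigenvalues. Hence $e_rf$ is indeed quadratic with positive semidefinite leading term $\tfrac r2(\Id-M)$, which is consistent with $e_rf\in\Gamma_0(\R^n)$. I do not anticipate any genuine obstacle: the substance is the algebraic reduction of the formula of Theorem~\ref{generf} and the elementary spectral observation about $\Id-(\Id+A)^{-1}$.
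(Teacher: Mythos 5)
Your proposal is correct and follows essentially the same route as the paper: apply Theorem~\ref{generf} with $a=0$, identify $(\Id+A^{-1})^{-1}=\Id-(\Id+A)^{-1}$ via Fact~\ref{rockwets12.14}, and expand the quadratic. The only cosmetic difference is that you verify $\Id-(\Id+A)^{-1}\in S_+^n$ by diagonalizing $A$, whereas the paper notes directly that $(\Id+A^{-1})^{-1}$ is monotone and symmetric; both are fine.
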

\begin{proof}Applying Theorem \ref{generf} with $a=0$ and denoting $(\Id+A)^{-1}$ as $\C,$ we have
\begin{align*}
e_rf(x)&=rq_{(\Id+A^{-1})^{-1}}\left(x-\frac{b}{r}\right)+\langle b,x\rangle-\frac{1}{r}q(b)+c\\
&=\frac{r}{2}\left\langle x-\frac{b}{r},(\Id+A^{-1})^{-1}\left(x-\frac{b}{r}\right)\right\rangle+\langle b,x\rangle-\frac{1}{2r}\langle b,b\rangle+c\\
&=\frac{r}{2}\left\langle x-\frac{b}{r},[\Id-(\Id+A)^{-1}]\left(x-\frac{b}{r}\right)\right\rangle+\langle b,x\rangle-\frac{1}{2r}\langle b,b\rangle+c\\
&=\frac{r}{2}\langle x,x\rangle-\langle x,b\rangle+\frac{1}{2r}\langle b,b\rangle-\frac{r}{2}\langle x,\C x\rangle+\langle x,\C b\rangle-\frac{1}{2r}\langle b,\C b\rangle+\langle b,x\rangle-\frac{1}{2r}\langle b,b\rangle+c\\
&=\frac{r}{2}\langle x,(\Id-\C)x\rangle+\langle x,\C b\rangle-\frac{1}{2r}\langle b,\C b\rangle+c\\
&=\frac{r}{2}\langle x,[\Id-(\Id+A)^{-1}]x\rangle+\langle b,(\Id+A)^{-1}x\rangle-\frac{1}{2r}\langle b,(\Id+A)^{-1}b\rangle+c
\end{align*}
Since $\Id-(\Id+A)^{-1}=(\Id+A^{-1})^{-1}$ is monotone symmetric, we have that $\Id-(\Id+A)^{-1}\in S^n_+$ and the proof is complete.
\end{proof}
\begin{thm}\label{nonthm}
Let $f$ be a convex quadratic function: $f(x)=\frac{r}{2}\langle x,Ax\rangle+\langle b,x\rangle+c,$ $A\in S^n_+,$ $b\in\R^n,$ $c\in\R,$ $r>0.$ Then $A$ is nonexpansive if and only if $f=e_rg$ where $g$ is a generalized linear-quadratic function: $$g(x)=\begin{cases}\frac{r}{2}\langle x,P^{-1}x\rangle+\langle t,x\rangle+s,&\mbox{if }x\in\dom P^{-1},\\\infty,&\mbox{if }x\not\in\dom P^{-1},\end{cases}$$ with $P^{-1}$ a monotone linear relation. This includes the case $g(x)=\iota_{\{t\}}(x)+s.$ Specifically, $g$ is as follows.
\begin{itemize}
\item[(i)] If $A=\Id,$ then $g(x)=\iota_{\{t\}}(x)+s=q_{N_{\{0\}}}(x-t)+s$ (see Remark \ref{normalrem}), where
\begin{equation}\label{quad1}
t=-\frac{b}{r}\mbox{ and }s=c-\frac{r}{2}\langle b,b\rangle.
\end{equation}
\item[(ii)] The matrix $A\in S^n_+\setminus\Id$ is nonexpansive if and only if $g(x)=\frac{r}{2}\langle x,P^{-1}x\rangle+\langle t,x\rangle+s,$ where
\begin{equation}\label{quad2}
P^{-1}=(\Id-A)^{-1}-\Id,~t\in (\Id-A)^{-1}b,\mbox{ and }s=c+\frac{1}{2r}\langle b,(\Id-A)^{-1}b\rangle.
\end{equation}
\end{itemize}
\end{thm}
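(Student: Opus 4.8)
The plan is to prove both implications, using Proposition~\ref{1lip} (the $r$-Lipschitz characterization of Moreau envelopes) for one direction, and Theorem~\ref{generf} together with Proposition~\ref{equivalent} to construct $g$ for the other.

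For the direction ``$f=e_rg$ with $g$ as in the statement $\Rightarrow$ $A$ nonexpansive'': the function $g$ lies in $\Gamma_0(\R^n)$ --- it equals $\tfrac r2\langle\cdot,P^{-1}\cdot\rangle$ plus an affine term, which is convex, proper and lsc since $P^{-1}$ is a maximally monotone symmetric linear relation (Proposition~\ref{qastartheorem}), or else it is $\iota_{\{t\}}+s$, trivially in $\Gamma_0(\R^n)$ --- so Proposition~\ref{1lip} applies and $\nabla f$ is $r$-Lipschitz. Since $\nabla f(x)=rAx+b$, this says exactly $\|A(x-y)\|\le\|x-y\|$ for all $x,y$. (This is the contrapositive of Proposition~\ref{notnon}.)

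For the converse, assume $A\in S_+^n$ is nonexpansive; by Proposition~\ref{equivalent} it is then firmly nonexpansive, its eigenvalues lie in $[0,1]$, and $\Id-A\in S_+^n$. If $A=\Id$, take $g=\iota_{\{t\}}+s=q_{N_{\{0\}}}(\cdot-t)+s$: since $N_{\{0\}}^{-1}$ is the zero operator, $(\Id+N_{\{0\}}^{-1})^{-1}=\Id$, so Theorem~\ref{generf} gives $e_rg(x)=rq_{\Id}(x-t)+s=\tfrac r2\|x-t\|^2+s$, and matching this to $f(x)=\tfrac r2\|x\|^2+\langle b,x\rangle+c$ forces $t=-b/r$ and pins down $s$. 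If $A\ne\Id$, set $P^{-1}:=(\Id-A)^{-1}-\Id$, so that $\Id+P^{-1}=(\Id-A)^{-1}$ and, by Fact~\ref{rockwets12.14}, $(\Id+P)^{-1}=\Id-(\Id+P^{-1})^{-1}=\Id-(\Id-A)=A$; inverting gives $P=A^{-1}-\Id$. Since $A$ is firmly nonexpansive, the identity $A=(\Id+P)^{-1}$ exhibits $P$ --- hence also $P^{-1}$ --- as a maximally monotone linear relation via Proposition~\ref{equivalent}(iii), and Lemma~\ref{lem:invsym} (with $A^{-1}$ symmetric) makes both symmetric. Now put $g(x)=\tfrac r2\langle x,P^{-1}x\rangle+\langle t,x\rangle+s$ with $(\Id-A)t=b$ and $s=c+\tfrac1{2r}\langle b,(\Id-A)^{-1}b\rangle$. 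Applying Theorem~\ref{generf} to the monotone linear relation $P^{-1}$, and noting $(\Id+(P^{-1})^{-1})^{-1}=(\Id+P)^{-1}=A$, yields $e_rg(x)=rq_A(x-t/r)+\langle t,x\rangle-\tfrac1rq(t)+s$; expanding $q_A$ with $A=A^\top$ and substituting $(\Id-A)t=b$ collapses the right-hand side to $\tfrac r2\langle x,Ax\rangle+\langle b,x\rangle+c=f(x)$.

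The step I expect to be the main obstacle is establishing that $P^{-1}=(\Id-A)^{-1}-\Id$ is a maximally monotone symmetric linear relation, since this is precisely where the hypothesis ``$A$ nonexpansive'' is consumed --- through its equivalence with firm nonexpansiveness, Proposition~\ref{equivalent} --- and it is what makes $g$ a bona fide generalized linear-quadratic function, so that Theorem~\ref{generf} can be invoked. It also forces careful handling of the set-valued inverse when $\Id-A$ is singular, i.e.\ when $A$ has an eigenvalue $1$ but $A\ne\Id$: there one must verify $b\in\ran(\Id-A)$ so that $t$ exists and the displayed formula for $g$ is meaningful (otherwise a translate of $g$ is needed). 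A secondary, purely computational point is tracking the affine and constant terms through Theorem~\ref{generf} to confirm that they land exactly on $b$ and $c$.
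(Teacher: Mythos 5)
Your argument follows the paper's own proof almost step for step: the forward implication via the $r$-Lipschitz characterization of Proposition \ref{1lip} (the paper routes this through Proposition \ref{notnon}, which amounts to the same thing), and the converse by writing $A=(\Id+P)^{-1}$ with $P$ maximally monotone symmetric and matching coefficients through the Moreau-envelope formula. Your one deviation is in your favour: you apply Theorem \ref{generf}, which is stated for maximally monotone symmetric linear relations, where the paper applies Proposition \ref{convquad}, which is stated only for matrices in $S_+^n$, to a $g$ built from the relation $P^{-1}$; your citation is the more defensible one. The obstacle you flag at the end is genuine and is not addressed by the paper either: the coefficient equation $b=(\Id-A)t$ is solvable only when $b\in\ran(\Id-A)$, and if $A\neq\Id$ has eigenvalue $1$ with $b\notin\ran(\Id-A)$ (e.g.\ $A=\Diag(1,0)$, $b=(1,0)$ in $\R^2$, where $\dom P^{-1}=\{0\}\times\R$ and no choice of $t,s$ can produce the linear term $\langle b,\cdot\rangle$), then $f$ is still the Moreau envelope of a generalized linear-quadratic function, but only of the shifted form $q_{P^{-1}}(\cdot-a)+\langle b',\cdot\rangle+c'$ with $a\notin\dom P^{-1}$, not of the unshifted form displayed in part (ii). So your remark that a translate of $g$ is then needed is exactly right, and a complete proof must either add the hypothesis $b\in\ran(\Id-A)$ or restate (ii) with the shifted $g$. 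One small further point: in case (i) the matching actually yields $s=c-\tfrac{1}{2r}\langle b,b\rangle$, so the constant printed in \eqref{quad1} contains a typo that your ``pins down $s$'' silently sidesteps.
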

\begin{proof}
(i) Let $g(x)=\iota_{\{t\}}(x)+s=\begin{cases}s,&x=t,\\\infty,&x\neq t.\end{cases}$ Then
\begin{align*}
e_rg(x)&=\inf\limits_y\left\{g(y)+\frac{r}{2}\|y-x\|^2\right\}\\
&=g(t)+\frac{r}{2}\|t-x\|^2\\
&=s+\frac{r}{2}\langle t-x,t-x\rangle\\
&=s+\frac{r}{2}(\langle t,t\rangle-2\langle t,x\rangle+\langle x,x\rangle)\\
&=\frac{r}{2}\langle x,\Id x\rangle-r\langle t,x\rangle+\frac{r}{2}\langle t,t\rangle+s.
\end{align*}
Equating
\begin{equation}\label{quad3}
A=\Id,~~b=-rt,~~\mbox{and }c=\frac{r}{2}\langle t,q\rangle+s,
\end{equation}
we have that for any choice of $b\in\R^n$ and $c\in\R,$ there exists $g(x)=\iota_{\{t\}}(x)+s$ such that
$$f(x)=\frac{r}{2}\langle x,x\rangle+\langle b,x\rangle+c=e_rg(x).$$
The equations in \eqref{quad1} are obtained by solving the equations in \eqref{quad3} for $t$ and $s.$\\

\noindent(ii) By Proposition \ref{notnon}, if $A$ is not nonexpansive, then there does not exist $g\in\Gamma_0(\R^n)$ such that $f(x)=e_rg(x).$ Thus, supposing that there does exist such a $g,$ we have that $A$ is nonexpansive. Then by Fact \ref{baillon}, $A=(\Id+P)^{-1}$ for some maximally monotone operator $P.$ Since $A\in S_+^n,$ $P$ is a symmetric linear relation by Proposition \ref{equivalent}. Now using the general set-valued inverse $P^{-1},$ we set
$$g(x)=\begin{cases}
\frac{r}{2}\langle x,P^{-1}x\rangle+\langle t,x\rangle+s,&\mbox{if }x\in\dom P^{-1},\\\infty,&\mbox{if }x\not\in\dom P^{-1}.
\end{cases}$$
This function $g$ is well-defined due to Fact \ref{liangprop}. Since $P$ is a monotone linear relation, the function
$$h(x)=\begin{cases}
\frac{1}{2}\langle x,P^{-1}x\rangle,&\mbox{if }x\in\dom P,\\\infty,&\mbox{if }x\not\in\dom P
\end{cases}$$ is single-valued. Then by Proposition \ref{convquad}, we have
\begin{align*}
e_rg(x)&=\frac{r}{2}\langle x,[\Id-(\Id+P^{-1})^{-1}]x\rangle+\langle t,(\Id+P^{-1})^{-1}x\rangle-\frac{1}{2r}\langle t,(\Id+P^{-1})^{-1}t\rangle+s\\
&=\frac{r}{2}\langle x,(\Id+P)^{-1}x\rangle+\langle t,(\Id+P^{-1})^{-1}x\rangle-\frac{1}{2r}\langle t,(\Id+P^{-1})^{-1}t\rangle+s.\mbox{ (Fact \ref{rockwets12.14})}
\end{align*}
Equating
\begin{equation}\label{pqr}
A=(\Id+P)^{-1},~~b=(\Id+P^{-1})^{-1}t,\mbox{ and }c=s-\frac{1}{2r}\langle t,(\Id+P^{-1})^{-1}t\rangle,
\end{equation}
we have that $f(x)=\frac{r}{2}\langle x,Ax\rangle+\langle b,x\rangle+c$ is the Moreau envelope of $g(x)=\frac{r}{2}\langle x,P^{-1}x\rangle+\langle t,x\rangle+s.$ The equations in \eqref{quad2} are obtained by solving the equations in \eqref{pqr} for $P^{-1},$ $t,$ and $s.$
\end{proof}
\begin{thm}\label{genquadthm}
The function $f\in\Gamma_0(\R^n)$ is a generalized linear-quadratic function if and only if $e_rf\in\Gamma_0(\R^n)$ is a quadratic function. Specifically, $$e_rf(x)=\frac{1}{2}\langle x,Ax\rangle+\langle b,x\rangle+c\qquad\forall x\in\R^n$$with $A\in S^n_+,$ if and only if
$$f(x)=q_{(A^{-1}-\Id/r)^{-1}}\left(x+\frac{b}{r}\right)+\langle b,x\rangle+c+\frac{1}{2r}\|b\|^2\qquad\forall x\in\R^n.$$
\end{thm}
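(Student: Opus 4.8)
The proof will rest on the explicit ``Specifically'' equivalence, because every convex quadratic function in $\Gamma_0(\R^n)$ may be written $\tfrac12\langle x,Ax\rangle+\langle b,x\rangle+c$ with $A\in S^n_+$, and the displayed formula for $f$ visibly exhibits $f$ as a generalized linear-quadratic function; conversely, if $f\in\Gamma_0(\R^n)$ is generalized linear-quadratic then $e_rf$ is quadratic by Theorem \ref{generf}. Hence it is enough to prove that, for $f\in\Gamma_0(\R^n)$,
$$e_rf(x)=\tfrac12\langle x,Ax\rangle+\langle b,x\rangle+c\ \ (A\in S^n_+)\iff f(x)=q_{(A^{-1}-\Id/r)^{-1}}\!\bigl(x+\tfrac br\bigr)+\langle b,x\rangle+c+\tfrac1{2r}\|b\|^2.$$

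To prove the implication ``$\Leftarrow$'', I would cast the right-hand side into the form required by Theorem \ref{generf}: prox-centre $a=-b/r$, linear term $b$, constant $c+\tfrac1{2r}\|b\|^2$, and relation $\tfrac1rM$ with $M:=(A^{-1}-\Id/r)^{-1}$. Theorem \ref{generf} then gives $e_rf(x)=r\,q_B\bigl(x-a-\tfrac br\bigr)+\langle b,x\rangle-\tfrac1rq(b)+c+\tfrac1{2r}\|b\|^2$, where $B:=\bigl(\Id+(\tfrac1rM)^{-1}\bigr)^{-1}$; here $x-a-b/r=x$ and the constants collapse to $c$, so it remains to show $r\,q_B=\tfrac12\langle\cdot,A\cdot\rangle$. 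This is the genuinely technical point: working only with $(\tfrac1rM)^{-1}x=M^{-1}(rx)$, with $M^{-1}=A^{-1}-\Id/r$ (invert a graph twice), and with the description of $A^{-1}$ for positive semidefinite $A$ in Proposition \ref{threeprops}, one computes $B=\tfrac1rA$, whence $r\,q_{\frac1rA}=\tfrac12\langle\cdot,A\cdot\rangle$. Legitimacy of this calculation requires $\tfrac1rM=\tfrac1r(A^{-1}-\Id/r)^{-1}$ to be a maximally monotone symmetric linear relation; symmetry follows from Lemmas \ref{lem:invsym} and \ref{lem:sumsym}, while monotonicity, by a spectral computation on $\ran A$, holds exactly when $\lambda_{\max}(A)\le r$, so the equivalence is non-vacuous only in that regime.

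For ``$\Rightarrow$'': since $f\in\Gamma_0(\R^n)$, the function $e_rf$ is a bona fide Moreau envelope, so $\nabla e_rf=A(\cdot)+b$ is $r$-Lipschitz by Proposition \ref{1lip} (equivalently Proposition \ref{nablaprop}(ii)), forcing $\lambda_{\max}(A)\le r$; hence $A^{-1}-\Id/r$ is a maximally monotone symmetric linear relation and $g(x):=q_{(A^{-1}-\Id/r)^{-1}}(x+b/r)+\langle b,x\rangle+c+\tfrac1{2r}\|b\|^2$ is a well-defined generalized linear-quadratic function in $\Gamma_0(\R^n)$. By the ``$\Leftarrow$'' computation $e_rg=e_rf$, and the Moreau envelope is injective on $\Gamma_0(\R^n)$ --- by Proposition \ref{calcenv}(vi), $f^*=(e_rf)^*-q/r$, so $f=\bigl((e_rf)^*-q/r\bigr)^*$ is recovered from $e_rf$ alone --- whence $f=g$. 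I expect the main difficulty to lie in the relation arithmetic of the ``$\Leftarrow$'' step, carried out through operators whose inverses are genuinely set-valued, together with the simultaneous verification that $A^{-1}-\Id/r$ is maximally monotone and symmetric; the degenerate case $A=r\Id$ needs no separate argument, since there $A^{-1}-\Id/r$ is the zero single-valued map, its set-valued inverse is $N_{\{0\}}$, and $q_{N_{\{0\}}}(\cdot+b/r)$ is the shifted indicator of Remark \ref{normalrem}, so $g$ reduces to $\iota_{\{t\}}+s$.
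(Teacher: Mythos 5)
Your proposal is correct, but it is organized quite differently from the paper's argument. For the direction ``$e_rf$ is the given quadratic $\Rightarrow f$ has the stated form,'' the paper works entirely on the conjugate side: from Proposition \ref{calcenv}(vi) it writes $f^*=(e_rf)^*-q/r$, computes $(q_A+\langle\cdot,b\rangle+c)^*=q_{A^{-1}}(\cdot-b)-c$ via Example \ref{rockext}, absorbs the $-q/r$ term to get $f^*=q_{A^{-1}-\Id/r}(\cdot-b)-\langle\cdot,b/r\rangle-c+q(b)/r$, and conjugates once more using $q_M^*=q_{M^{-1}}$ (Proposition \ref{qastartheorem}) to land on the displayed formula; the other direction is dispatched by citing Theorem \ref{generf}. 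You instead verify the forward Moreau-envelope computation explicitly --- reducing $\bigl(\Id+(\tfrac1rM)^{-1}\bigr)^{-1}$ to $\tfrac1rA$ via $\Id+r(A^{-1}-\Id/r)=rA^{-1}$, which is correct as relation arithmetic --- and then recover the converse from injectivity of $e_r$ on $\Gamma_0(\R^n)$. That injectivity is itself the identity $f=\bigl((e_rf)^*-q/r\bigr)^*$, i.e.\ exactly the conjugation the paper carries out concretely, so the two proofs rest on the same mechanism but package it differently. What your route buys is twofold: it makes explicit the constraint $\lambda_{\max}(A)\le r$ (forced by the $r$-Lipschitz continuity of $\nabla e_rf$ from Proposition \ref{1lip}), which the paper leaves implicit and which is genuinely needed for $(A^{-1}-\Id/r)^{-1}$ to be monotone and hence for $q_{(A^{-1}-\Id/r)^{-1}}$ to be convex; and it isolates the degenerate case $A=r\Id$, where the relation collapses to $N_{\{0\}}$ and $f$ becomes a shifted indicator, reconciling the formula with Theorem \ref{nonthm}(i). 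The price is that you must separately justify that $(A^{-1}-\Id/r)^{-1}$ is a \emph{maximally} monotone symmetric linear relation before invoking Theorem \ref{generf} (symmetry from Lemmas \ref{lem:invsym} and \ref{lem:sumsym}, maximality e.g.\ from Fact \ref{rockwets12.12}), a step the paper's conjugate-side computation sidesteps. No gap, just a different and somewhat more careful decomposition.
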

\begin{proof}
\begin{itemize}
\item[$(\Rightarrow)$] This is the statement of Theorem \ref{generf}.
\item[$(\Leftarrow)$] Let $e_rf(x)=\frac{1}{2}\langle x,Ax\rangle+\langle b,x\rangle+c,$ with $A$ symmetric, linear and monotone, $b\in\R^n,$ $c\in\R.$ Then$$(e_rf)^*=f^*+\frac{1}{r}q,$$and$$(q_A+\langle\cdot,b\rangle+c)^*=q_{A^{-1}}(\cdot-b)-c.$$ This gives us that
$$f^*=q_{A^{-1}-\Id/r}(\cdot-b)-\langle\cdot,b/r\rangle-c+q(b)/r.$$It follows that
\begin{align*}
f&=(q_{A^{-1}-\Id/r}(\cdot-b))^*(\cdot+b/r)+c-q(b)/r\\
&=q_{(A^{-1}-\Id/r)^{-1}}(\cdot+b/r)+\langle\cdot+b/r,b\rangle+c-q(b)/r\\
&=q_{(A^{-1}-\Id/r)^{-1}}(\cdot+b/r)+\langle\cdot+b/r,b\rangle+c-q(b)/r\\
&=q_{(A^{-1}-\Id/r)^{-1}}(\cdot+b/r)+\langle\cdot,b\rangle+c+q(b)/r.
\end{align*}
Thus, $f\in\Gamma_0(\R^n)$ is a generalized linear-quadratic function.
\end{itemize}
\end{proof}

\section{Applications}\label{sec:semi}

This final section presents a few applications of the theory seen thus far. We build on the idea of extended norms, give an application to the least squares problem, and explore the limit of a sequence of generalized linear-quadratic functions.

\subsection{A seminorm with infinite values}

In \cite{beer2015norms,beer2015structural}, Beer and Vanderwerff present the idea of norms that are allowed to take on infinite values. These so-called extended norms are functions on linear spaces that satisfy the properties of a norm when they are finite-valued, but can be infinite-valued as well. The authors extend many properties of norms to the setting of an extended norm space $(X,\|\cdot\|),$ where $X$ is a vector space and $\|\cdot\|$ is an extended norm. In that spirit, we present here an extended seminorm.
\begin{df}
A function $k:\R^n\rightarrow\OR$ is a \emph{gauge} if $k$ is a nonnegative, positively homogeneous, convex function such that $k(0)=0.$ Thus, a gauge is a function $k$ such that
$$k(x)=\inf\{\mu\geq0: \ x\in\mu C\}$$for some nonempty convex set $C.$
\end{df}
\begin{df}
The \emph{polar} of a gauge $k$ is the function $k^o$ defined by
$$k^o(x^*)=\inf\{\mu^*\geq0:\ \langle x,x^*\rangle\leq\mu^*k(x)~\forall x\in\R^n\}.$$If $k$ is finite everywhere and positive except at the origin, the polar of $k$ can be written as
$$k^o(x^*)=\sup\left\{\frac{\langle x,x^*\rangle}{k(x)}: x\neq 0\right\}.$$
\end{df}
\begin{df}
A function $k:\R^n\rightarrow\OR$ is an \emph{extended seminorm} if
\begin{itemize}
\item[(i)]$k(x)\geq0~\forall x\in\R^n,$
\item[(ii)]$k(\alpha x)=|\alpha|k(x)~\forall x\in\R^n,\forall\alpha\in\R,$
\item[(iii)]$k(x+y)\leq k(x)+k(y)~\forall x,y\in\R^n,$
\item[(iv)]$k(x)=\infty$ if $x\not\in\dom k.$
\end{itemize}
\end{df}
\begin{thm}
Let $A$ be a maximally monotone symmetric linear relation. Then the following hold.
\begin{itemize}
\item[(i)] The function$$k=\left(2q_A\right)^{1/2}$$is an extended seminorm. Moreover, \begin{equation}\label{k0A0}k^{-1}(0)=A^{-1}0.\end{equation}
\item[(ii)]For all $x\in\dom A$ and for all $x^*\in\ran A,$ we have $$\langle x,x^*\rangle\leq\sqrt{\langle x,Ax\rangle}\sqrt{\langle x^*,A^{-1}x^*\rangle}.$$
\item[(iii)] The closed convex sets$$C=\{x:q_A(x)\leq1\},\qquad C^*=\{x^*:q_{A^{-1}}(x^*)\leq1\}$$are polar to each other.
\end{itemize}
\end{thm}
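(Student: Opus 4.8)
The plan is to reduce all three parts to one elementary fact: the bilinear form $B(u,v)=\langle u,Av\rangle$ on the subspace $\dom A$ is well defined, symmetric and positive semidefinite, hence satisfies the Cauchy--Schwarz inequality $|\langle u,Av\rangle|\le\sqrt{\langle u,Au\rangle}\,\sqrt{\langle v,Av\rangle}$ for all $u,v\in\dom A$. Single-valuedness of $\langle u,Av\rangle$ on $\dom A$ follows from Fact~\ref{liangprop} together with the symmetry of $A$ (Definition~\ref{def:symmetric}); positive semidefiniteness is the statement $\langle w,Aw\rangle\ge0$ for $w\in\dom A$, again Fact~\ref{liangprop}. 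For the inequality itself, given $u,v\in\dom A$ I would expand $0\le\langle u+tv,A(u+tv)\rangle=\langle u,Au\rangle+2t\langle u,Av\rangle+t^2\langle v,Av\rangle$ for all $t\in\R$ — using the linear-relation identity $A(\alpha u+\beta v)=\alpha Au+\beta Av+A0$ (Fact~\ref{liangjinfactlinear}), the inclusion $A0\subset(\dom A)^\perp$, and symmetry to merge the two cross terms — and read off $\langle u,Av\rangle^2\le\langle u,Au\rangle\langle v,Av\rangle$, handling $\langle v,Av\rangle=0$ separately.

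For (i): nonnegativity of $k$ and $k\equiv\infty$ off $\dom A=\dom q_A$ are immediate from $q_A\ge0$ (Fact~\ref{liangprop}). Positive homogeneity $k(\alpha x)=|\alpha|\,k(x)$ comes from $q_A(\alpha x)=\alpha^2 q_A(x)$, itself a consequence of Fact~\ref{liangjinfactlinear} and $A0\subset(\dom A)^\perp$ (the case $\alpha=0$ uses the convention $0\cdot\infty=0$). The triangle inequality is the only substantive step: for $x,y\in\dom A$ one has $q_A(x+y)=q_A(x)+q_A(y)+\langle x,Ay\rangle$, and the sub-lemma bounds the cross term by $\sqrt{2q_A(x)}\,\sqrt{2q_A(y)}=k(x)k(y)$, so $2q_A(x+y)\le(k(x)+k(y))^2$; when $x$ or $y$ lies off $\dom A$ the right-hand side is $\infty$ and there is nothing to prove. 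Lastly $k^{-1}(0)=\{q_A=0\}=A^{-1}0$ is exactly the equivalence (i)$\Leftrightarrow$(v) of Proposition~\ref{tfae5}.

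For (ii): given $x\in\dom A$ and $x^*\in\ran A=\dom A^{-1}$, choose $v\in A^{-1}x^*$, so $x^*\in Av$ and $v\in\dom A$; then $\langle x^*,A^{-1}x^*\rangle=\langle v,Av\rangle$ and $\langle x,x^*\rangle=\langle x,Av\rangle$, and the sub-lemma gives the claim. (Equivalently, apply the Fenchel--Young inequality to $\lambda x$ and $\lambda^{-1}x^*$, insert $q_A^*=q_{A^{-1}}$ from Proposition~\ref{qastartheorem} and the $2$-homogeneity of $q_A,q_{A^{-1}}$ to get $\langle x,x^*\rangle\le\lambda^2 q_A(x)+\lambda^{-2}q_{A^{-1}}(x^*)$ for all $\lambda>0$, and minimize over $\lambda$ by AM--GM.) For (iii): $q_A$ and $q_{A^{-1}}=q_A^*$ are proper, lsc and convex (Proposition~\ref{qastartheorem}), so $C$ and $C^*$ are closed and convex and each contains $0$. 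Because $q_A$ is positively homogeneous of degree $2$, the Minkowski gauge of $C$ is $\sqrt{q_A}$ and that of $C^*$ is $\sqrt{q_{A^{-1}}}$; feeding $q_A^*=q_{A^{-1}}$ (Proposition~\ref{qastartheorem}) into the classical dictionary relating a gauge, its polar, and their sublevel sets — the polar gauge of $C$ is the support function of $C$, which a short Lagrangian/AM--GM computation identifies with the relevant multiple of $\sqrt{q_{A^{-1}}}$ — identifies the polar of $C$ with $C^*$; the bipolar theorem supplies the reverse. Part (ii) is precisely the estimate certifying $C^*\subseteq C^\circ$.

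The main obstacle is not analytic but organizational: because $A$ is a genuine relation, one must repeatedly check single-valuedness of $\langle\cdot,A\cdot\rangle$ on $\dom A$ and dispose of the off-domain and degenerate cases ($x\notin\dom A$, $q_A(x)=0$, $x^*\notin\ran A$) carefully in each axiom and each inclusion. Once the form $\langle\cdot,A\cdot\rangle$ is recognized as symmetric and positive semidefinite, the Cauchy--Schwarz sub-lemma, and hence everything built on it, is routine.
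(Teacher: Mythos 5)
Your route through parts (i) and (ii) is genuinely different from the paper's and is, if anything, more self-contained. The paper disposes of all three parts by citing Rockafellar's gauge-duality results \cite[Corollaries 15.3.1 and 15.3.2]{convanalrock} applied to $f=q_A$ with $p=2$, combined with $q_A^*=q_{A^{-1}}$ (Proposition \ref{qastartheorem}); you instead rebuild the key inequality from scratch as a Cauchy--Schwarz inequality for the single-valued, symmetric, positive semidefinite form $\langle u,Av\rangle$ on $\dom A$. Your verification of single-valuedness via $A0\subset(\dom A)^\perp$ and Fact \ref{liangprop}, the discriminant argument, the cross-term identity $q_A(x+y)=q_A(x)+q_A(y)+\langle x,Ay\rangle$, and the reduction of (ii) to the sub-lemma by choosing $v\in A^{-1}x^*$ (so that $\langle x^*,A^{-1}x^*\rangle=\langle v,Av\rangle$) are all sound; the identification $k^{-1}(0)=A^{-1}0$ via Proposition \ref{tfae5} is exactly the paper's step. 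What the paper's approach buys is brevity; what yours buys is independence from the citation and explicit handling of the relation-theoretic degeneracies ($x\notin\dom A$, multivaluedness, $q_A(v)=0$) that the cited corollaries do not address verbatim.

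Part (iii), however, hides a genuine gap in the phrase ``the relevant multiple of $\sqrt{q_{A^{-1}}}$.'' If you compute that multiple, the gauge of $C=\{x:q_A(x)\le1\}$ is $\sqrt{q_A}=k/\sqrt2$, so its polar gauge is $\sqrt2\,k^o=2\sqrt{q_{A^{-1}}}$ and hence $C^\circ=\{x^*:q_{A^{-1}}(x^*)\le 1/4\}$, which is strictly smaller than $C^*=\{x^*:q_{A^{-1}}(x^*)\le1\}$. (Test $A=\Id$ on $\R$: $C=[-\sqrt2,\sqrt2]$ has polar $[-1/\sqrt2,1/\sqrt2]$, not $[-\sqrt2,\sqrt2]$.) So the claim is false with the normalization ``$q_A\le1$''; the correct sets --- and the ones the paper's own proof of (iii) actually writes down --- are $C=\{x:\langle x,Ax\rangle\le1\}$ and $C^*=\{x^*:\langle x^*,A^{-1}x^*\rangle\le1\}$, i.e.\ $k\le1$ and $k^o\le1$, for which polarity does follow from your sub-lemma (or from \cite[Corollary 15.3.2]{convanalrock}). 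You should either pin down the constant and rescale the sets accordingly, or flag that the statement as printed needs this correction; as written, your argument for (iii) would not close.
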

\begin{proof}
(i) Applying \cite[Corollary 15.3.1]{convanalrock} with $f=q_A$ and $p=2,$ We have that $k$ is a gauge function. Thus, $k$ is an extended seminorm. To see \eqref{k0A0}, we have that $k(x)=0\Leftrightarrow q_A(x)=0,$ so it suffices to apply Proposition \ref{tfae5}.\\

\noindent(ii) By Proposition \ref{qastartheorem}, $q_A^*=q_{A^{-1}}.$ By \cite[Corollary 15.3.1]{convanalrock}, we have that $k^o(x^*)=(2q^*_A(x^*))^{1/2},$ and that $\forall x\in\dom A,\forall x^*\in\ran A,$
\begin{align*}
\langle x,x\rangle^*&\leq k(x)k^o(x^*)\\
&=2(q_A(x))^{1/2}(q_{A^{-1}}(x^*))^{1/2}\\
&=\sqrt{\langle x,Ax\rangle}\sqrt{\langle x^*,A^{-1}x^*\rangle}.
\end{align*}
(iii) By \cite[Corollary 15.3.2]{convanalrock}, we have that the closed, convex sets
$$C=\{x:\langle x,Ax\rangle\leq1\},\qquad C^*=\{x^*:\langle x^*,A^{-1}x^*\rangle\leq1\}$$are polar to each other.
\end{proof}
\begin{rem}
The above result generalizes Rockafellar's result on \cite[p. 136]{convanalrock} with $Q=A$ from a positive definite matrix to a maximally monotone symmetric linear relation.
\end{rem}

\subsection{The least squares problem}

In this section, we show that generalized quadratic functions can be used to study the least squares problem.
Let $A\in \R^{m\times n}$ and $b\in\R^{m}$. The general least squares problem is to find a
vector $x\in\R^n$ that
minimizes
\begin{equation}\label{e:least}
\ell:\R^n\rightarrow\R: x\mapsto \frac{1}{2}\|Ax-b\|^2=q_{A^\top A}(x)-\langle x,A^\top b\rangle+q_{\Id}(b).
\end{equation}
\begin{thm} For the function $\ell$ given by \eqref{e:least}, we have
\begin{enumerate}
\item[(i)]\label{i:conj} $\ell^*(y)=q_{(A^\top A)^{-1}}(y+A^\top b)-q_{\Id}(b)\quad\forall y\in\R^n;$
\item[(ii)]\begin{equation}\label{e:subdiff}
\partial \ell^*(y)=(A^\top A)^{-1}(y+A^\top b)\quad\forall y\in\R^n,
\end{equation}
 and
\begin{equation}\label{e:domain}
\dom \ell^*=\ran A^\top.
\end{equation}
\end{enumerate}
\end{thm}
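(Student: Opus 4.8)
The strategy is to recognize that $A^\top A$ is a symmetric positive semidefinite matrix, hence an everywhere-defined maximally monotone symmetric linear relation, so that the earlier machinery applies verbatim with $A^\top A$ in place of a generic linear relation and $(A^\top A)^{-1}$ denoting its set-valued inverse; the latter is again a maximally monotone symmetric linear relation by Lemma~\ref{lem:invsym} together with the example following Definition~\ref{def:symmetric}.

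For part (i), I would rewrite the decomposition \eqref{e:least} in the form $\ell=q_{A^\top A}+\langle -A^\top b,\cdot\rangle+q_{\Id}(b)$ and apply Example~\ref{rockext} with linear relation $A^\top A$, linear-term coefficient $-A^\top b$, and constant $q_{\Id}(b)$. This yields at once $\ell^*(y)=q_{(A^\top A)^{-1}}(y+A^\top b)-q_{\Id}(b)$ for every $y\in\R^n$, which is the claim.

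For part (ii), note that the constant $-q_{\Id}(b)$ and the translation by $A^\top b$ leave the subdifferential unchanged, so $\partial\ell^*(y)=\partial q_{(A^\top A)^{-1}}(y+A^\top b)$; then Lemma~\ref{symlemma}, applied to the maximally monotone symmetric linear relation $(A^\top A)^{-1}$, gives $\partial q_{(A^\top A)^{-1}}=(A^\top A)^{-1}$, which is \eqref{e:subdiff}. For \eqref{e:domain}, part (i) shows $\ell^*(y)$ is finite precisely when $y+A^\top b\in\ran(A^\top A)=\dom(A^\top A)^{-1}$; invoking the linear-algebra identity $\ran(A^\top A)=\ran A^\top$ (which follows from $\ker(A^\top A)=\ker A$ and the symmetry of $A^\top A$, so that $\ran(A^\top A)=(\ker A)^\perp=\ran A^\top$) together with the fact that $\ran A^\top$ is a subspace containing $A^\top b$, one gets $y+A^\top b\in\ran A^\top\Leftrightarrow y\in\ran A^\top$, i.e.\ $\dom\ell^*=\ran A^\top$.

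The only ingredient beyond a direct citation of Example~\ref{rockext} and Lemma~\ref{symlemma} is the identity $\ran(A^\top A)=\ran A^\top$, and even that is standard; I anticipate no real obstacle, the substance of the argument being merely to verify that $A^\top A$ and $(A^\top A)^{-1}$ satisfy the hypotheses of those two results.
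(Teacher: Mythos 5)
Your proposal is correct and follows essentially the same route as the paper: part (i) is a direct application of Example \ref{rockext} to the decomposition $\ell=q_{A^\top A}+\langle -A^\top b,\cdot\rangle+q_{\Id}(b)$, and part (ii) combines the subdifferential formula for $q$ of a maximally monotone symmetric linear relation (the paper cites Proposition \ref{fourprops}, you cite its specialization Lemma \ref{symlemma}) with the identity $\ran(A^\top A)=\ran A^\top$ and the fact that $\ran A^\top$ is a subspace containing $A^\top b$. Your write-up of the domain computation is in fact cleaner than the paper's, but the substance is identical.
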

\begin{proof}
(i) Apply Example~\ref{rockext}.\\

\noindent(ii) Apply Proposition~\ref{fourprops} to obtain \eqref{e:subdiff}.
To see \eqref{e:domain}, using the facts that $\ran A^\top A=\ran A^\top$ (c.f. \cite[page 212]{matanalapp}) and that
$\ran A^\top$ is a subspace, we have
$$\dom \ell^* =\dom[(A^\top A)^{-1}-A^\top b]=\ran(A^\top A-A^\top b)=\ran(A^\top-A^\top b)=\ran A^\top.$$
\end{proof}

\subsection{Permanently staying in the generalized linear-quadratic world}

We end this work with an application for sequences of $q_{A_k}$ functions with $A_k$ linear relations, and the development of a calculus for the generalized linear-quadratic functions.
\begin{prop}[epiconvergence]\label{thm:sequence}
\begin{itemize}
\item[(i)]For all $k\in\N,$ let\begin{equation}\label{eq:qak}f_k=q_{A_k}(\cdot-a_k)+\langle b_k,\cdot\rangle+c_k,\end{equation}where $A_k$ is a maximally monotone symmetric linear relation, $a_k,b_k\in\R^n,$ $c_k\in\R.$ Suppose that $f_k\overset{e}\rightarrow f$ and that $f$ is proper. Then $f$ is a generalized linear-quadratic function:\begin{equation}\label{eq:sequence}f=q_{A}(\cdot-a)+\langle b,\cdot\rangle+c,\end{equation}where $A$ is a maximally monotone symmetric linear relation, $a,b\in\R^n,$ $c\in\R.$
\item[(ii)]For all $k\in\N,$ let\begin{equation}\label{eq:qak2}f_k=q_{A_k}+c_k,\end{equation}where $A_k$ is a maximally monotone symmetric linear relation, $c_k\in\R.$ Suppose that $f_k\overset{e}\rightarrow f$ and that $f$ is proper. Then $f$ is a generalized linear-quadratic function:\begin{equation}\label{eq:sequence2}f=q_{A}+c,\end{equation}where $A$ is a maximally monotone symmetric linear relation, $c\in\R.$
\end{itemize}
\end{prop}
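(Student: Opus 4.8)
The plan is to reduce the statement to Theorem~\ref{genquadthm} by showing that, for a fixed $r>0$, the Moreau envelope $e_rf$ is a convex quadratic function; the real content is transporting the quadratic structure of the $e_rf_k$ across the epi-limit. As a preliminary, each $f_k\in\Gamma_0(\R^n)$: since $A_k$ is maximally monotone, $q_{A_k}\in\Gamma_0(\R^n)$ by Proposition~\ref{fourprops} (or Proposition~\ref{qastartheorem}), and translating by $a_k$ and adding the affine term $\langle b_k,\cdot\rangle+c_k$ keeps us in $\Gamma_0(\R^n)$. The epi-limit of convex lsc functions is convex and lsc, so, being proper by hypothesis, $f\in\Gamma_0(\R^n)$; hence $e_rf$ is finite-valued, convex and $\mathcal C^1$ by Proposition~\ref{nablaprop}, in particular $e_rf\in\Gamma_0(\R^n)$.

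Next, by Theorem~\ref{generf} each envelope is quadratic, $e_rf_k(x)=\frac12\langle x,Q_kx\rangle+\langle p_k,x\rangle+\gamma_k$ with $Q_k:=r(\Id+A_k^{-1})^{-1}$, and $(\Id+A_k^{-1})^{-1}=\Id-(\Id+A_k)^{-1}$ is a symmetric, monotone, nonexpansive matrix by Fact~\ref{rockwets12.12} and Lemmas~\ref{lem:invsym}--\ref{lem:sumsym}; so $Q_k\in S_n^+$ with all eigenvalues in $[0,r]$. Since the $f_k$ and $f$ are convex with $f$ proper and $f_k\overset{e}\to f$, Fact~\ref{fact:convergences} gives $e_rf_k\overset{p}\to e_rf$, and since $e_rf_k,e_rf\in\Gamma_0(\R^n)$ are differentiable, Fact~\ref{fact:nablaconvex} gives $\nabla e_rf_k\to\nabla e_rf$ pointwise. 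Evaluating function values and gradients at $x=0$ gives $\gamma_k\to\gamma:=e_rf(0)$ and $p_k\to p:=\nabla e_rf(0)$; evaluating $\nabla e_rf_k$ at the standard basis vectors then forces $Q_k\to Q$ for some $Q\in S_n^+$ with eigenvalues in $[0,r]$. Passing to the limit, $e_rf(x)=\frac12\langle x,Qx\rangle+\langle p,x\rangle+\gamma$ is a convex quadratic, so Theorem~\ref{genquadthm} applies: $f$ is a generalized linear-quadratic function, explicitly $f=q_{(Q^{-1}-\Id/r)^{-1}}(\cdot+p/r)+\langle\cdot,p\rangle+\gamma+\frac1{2r}\|p\|^2$, where the eigenvalue bound on $Q$ makes $Q^{-1}-\Id/r$ monotone so that $A:=(Q^{-1}-\Id/r)^{-1}$ is a maximally monotone symmetric linear relation. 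Taking $a:=-p/r$, $b:=p$, $c:=\gamma+\frac1{2r}\|p\|^2$ gives \eqref{eq:sequence}; the indicator case of Remark~\ref{normalrem} is included, arising precisely when $Q=r\Id$, i.e. $A=N_{\{0\}}$.

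For part~(ii) one specializes $a_k=b_k=0$: Theorem~\ref{generf} then gives $e_rf_k(x)=\frac r2\langle x,(\Id+A_k^{-1})^{-1}x\rangle+c_k$ with no linear term, so $p_k=0$ for all $k$ and hence $p=0$; running the argument above with $b=0$ yields $f=q_{(Q^{-1}-\Id/r)^{-1}}+\gamma$, i.e. \eqref{eq:sequence2}. I expect the main obstacle to be the convergence of the quadratic \emph{coefficient matrices} $Q_k$ — not merely the pointwise convergence of the $e_rf_k$ — which is precisely what the gradient-convergence result Fact~\ref{fact:nablaconvex} for convex functions provides, used together with the uniform eigenvalue bound on $Q_k$ that keeps the limit inside $S_n^+$ and hence within reach of Theorem~\ref{genquadthm}.
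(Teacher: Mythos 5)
Your proof is correct, but it takes a genuinely different route from the paper's. The paper argues at the level of subdifferentials: from $f_k\overset{e}\rightarrow f$ it passes to graphical convergence $\partial f_k\overset{g}\rightarrow\partial f$, observes that each $\gra\partial f_k=\gra A_k+(a_k,b_k)$ is a maximally monotone affine subspace, concludes that the graphical limit $\gra\partial f$ is again maximally monotone and affine, and invokes an external result (\cite[Theorem 4.3]{bauschke2010borwein}) to write $\gra\partial f=\gra A+(a,b)$ before integrating back via Proposition \ref{fourprops}. You instead work at the level of Moreau envelopes: Theorem \ref{generf} makes each $e_rf_k$ an honest finite convex quadratic, Fact \ref{fact:convergences} (convex case) gives $e_rf_k\overset{p}\rightarrow e_rf$, Fact \ref{fact:nablaconvex} upgrades this to gradient convergence so that the coefficients $(Q_k,p_k,\gamma_k)$ converge, and Theorem \ref{genquadthm} converts the limiting quadratic envelope back into a generalized linear-quadratic $f$. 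What your route buys is that it stays entirely inside results already proved in the paper, it avoids both the uncited Attouch theorem on graphical convergence of subdifferentials and the external citation, and it recovers the additive constant $c$ automatically (the subdifferential route determines $f$ only up to a constant, a point the paper's proof leaves implicit); the uniform eigenvalue bound $\lambda_i(Q_k)\in[0,r]$ is exactly the right observation to keep the limit within reach of Theorem \ref{genquadthm}, i.e.\ to make $Q^{-1}-\Id/r$ monotone. The paper's route is shorter and more conceptual, and localizes the analytic work in one cited theorem. One cosmetic slip: applying Theorem \ref{generf} to $f_k=q_{A_k}(\cdot-a_k)+\cdots=\tfrac{r}{2}\langle\cdot-a_k,(A_k/r)(\cdot-a_k)\rangle+\cdots$ gives $Q_k=r\bigl(\Id+rA_k^{-1}\bigr)^{-1}$ rather than $r(\Id+A_k^{-1})^{-1}$ when $r\neq1$; this does not affect any of the properties you actually use (symmetry, positive semidefiniteness, eigenvalues in $[0,r]$), and disappears if you simply fix $r=1$, which the convex case of Fact \ref{fact:convergences} permits.
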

\begin{proof}(i) As $f_k\overset{e}\rightarrow f,$ we have $\partial f_k\overset{g}\rightarrow\partial f.$ Differentiating \eqref{eq:qak}, we find that $\partial f_k=A_k(\cdot-a_k)+b_k,$ so that $\gra\partial f_k=\gra A_k+(a_k,b_k)$ is maximally monotone and affine. Thus, $\gra\partial f$ is maximally monotone and affine. By \cite[Theorem 4.3]{bauschke2010borwein},  $\gra\partial f=\gra A+(a,b)$ for some maximally monotone symmetric linear relation $A.$ Then by Proposition \ref{fourprops}, we have $A=\partial q_A,$ so that \eqref{eq:sequence} is true.\\

\noindent(ii) The proof is similar to that of part (i), except that differentiating \eqref{eq:qak2} we find that $\partial f_k=A_k~\forall x\in\R^n,$ so that $\gra\partial f_k=\gra A_k$ is a linear subspace. Thus, $\gra\partial f$ is a linear subspace, $\gra\partial f=\gra A$ for some maximally monotone symmetric linear relation $A,$ and Proposition \ref{fourprops} gives $A=\partial q_A$ so that \eqref{eq:sequence2} is true.
\end{proof}
\noindent As a result of Proposition \ref{thm:sequence}, we are now able to define calculus rules for generalized linear-quadratic functions. We do so in the form of Theorems \ref{thm:qacalculus} and \ref{cor:calculusgen}, for which we define the following sets.
\begin{df}
Denote by $\mathcal{A}$ the set of maximally monotone symmetric linear relations on $\R^n.$ We define $S$ as the set of convex generalized linear-quadratic functions:
$$S=\left\{f=q_A(\cdot-a)+\langle b,\cdot\rangle+c:A\in\mathcal{A},a,b\in\R^n,c\in\R,f\in\Gamma_0(\R^n)\right\}.$$We define $T$ as the subset of $S$ obtained by setting $a=0:$
$$T=\left\{f=q_A+\langle b,\cdot\rangle+c:A\in\mathcal{A},b\in\R^n,c\in\R,f\in\Gamma_0(\R^n)\right\}.$$We define $U$ as the subset of $S$ obtained by setting $a=b=0:$
$$U=\left\{f=q_A+c:A\in\mathcal{A},c\in\R,f\in\Gamma_0(\R^n)\right\}.$$
\end{df}
\noindent We begin with calculus rules for the simpler case, the set $U.$
\begin{thm}\label{thm:qacalculus}
Let $d$ be the Attouch-Wets metric (see Definition \ref{attouchwetsmetric}.) The following hold.
\begin{itemize}
\item[(i)]The metric space $(U,d)$ is complete.
\item[(ii)]If $f\in U,$ then $f^*\in U.$
\item[(iii)]If $f\in U$ and $\lambda>0,$ then $\lambda f\in U.$
\item[(iv)]If $f_1,f_2\in U,$ then $f_1+f_2\in U.$
\item[(v)]If $f_1,f_2\in U,$ then $f_1\oblong f_2\in U.$
\end{itemize}
\end{thm}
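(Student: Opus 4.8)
The plan is to establish each of the five items by combining the structural description of $U$ (functions $q_A + c$ with $A$ maximally monotone symmetric linear) with the calculus already developed for $q_A$ and the epiconvergence result of Proposition \ref{thm:sequence}(ii).

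For (i), completeness, I would argue that $U$ is a closed subset of the complete metric space $(\Gamma_0(\R^n),d)$. Given a $d$-Cauchy sequence $\{f_k\}\subseteq U$, it converges in $(\Gamma_0(\R^n),d)$ to some $f\in\Gamma_0(\R^n)$; since $d$-convergence is equivalent to epiconvergence (by Fact \ref{fact:convergences} and the definition of the Attouch-Wets metric), $f_k\overset{e}\to f$, and $f$ is proper because it lies in $\Gamma_0(\R^n)$. Proposition \ref{thm:sequence}(ii) then gives $f=q_A+c\in U$, so $U$ is closed, hence complete. For (ii), given $f=q_A+c$ with $A\in\mathcal{A}$, Proposition \ref{calcenv}-type conjugacy together with Proposition \ref{qastartheorem} yields $f^* = q_A^* - c = q_{A^{-1}} - c$; since $A^{-1}$ is again maximally monotone symmetric linear (Lemma \ref{lem:invsym} plus the fact that the set-valued inverse of a maximally monotone operator is maximally monotone), $f^*\in U$. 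Item (iii) is immediate: $\lambda(q_A+c) = q_{\lambda A} + \lambda c$, and $\lambda A\in\mathcal{A}$ for $\lambda>0$.

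For (iv), if $f_i = q_{A_i}+c_i$ then $f_1+f_2 = q_{A_1}+q_{A_2}+(c_1+c_2) = q_{A_1+A_2} + (c_1+c_2)$ by Proposition \ref{p:difference}, and $A_1+A_2\in\mathcal{A}$ by Lemmas \ref{lem:sumsym} (which gives both maximal monotonicity and symmetry). For (v), infimal convolution, I would pass to conjugates: $(f_1\oblong f_2)^* = f_1^* + f_2^*$, which by (ii) and (iv) lies in $U$; then $f_1\oblong f_2 = (f_1^*+f_2^*)^*$ lies in $U$ by (ii) again, provided $f_1\oblong f_2$ is proper and lsc, which follows since $f_1,f_2\in\Gamma_0(\R^n)$ and, e.g., Proposition \ref{parallelsum} already identifies such infimal convolutions of $q_{A_i}$-type functions as generalized linear-quadratic with subdifferential the parallel sum. (Alternatively one can invoke Proposition \ref{parallelsum} directly to get $f_1\oblong f_2 = q_{(A_1^{-1}+A_2^{-1})^{-1}} + (c_1+c_2)$.)

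The main obstacle I anticipate is bookkeeping around properness and the interplay of domains: infimal convolutions and conjugates of functions with restricted domains (the $q_A$ can be $+\infty$ off a subspace) can in principle produce improper functions or require constraint-qualification-type conditions for the conjugate-of-sum formula. The cleanest route is to lean on the fact that $0\in\ri\ran A_i = \ri\dom A_i^{-1}$ for every $A_i\in\mathcal{A}$ (subspaces contain $0$ in their relative interior), so the qualification conditions in the conjugacy rules and Proposition \ref{parallelsum} are automatically met, and all functions involved stay in $\Gamma_0(\R^n)$. Once that observation is in place, each item reduces to a one-line application of an earlier result.
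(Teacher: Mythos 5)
Your proposal is correct and follows essentially the same route as the paper: completeness via Proposition \ref{thm:sequence} (closedness of $U$ in the complete space $(\Gamma_0(\R^n),d)$), conjugacy via Proposition \ref{qastartheorem}, sums via Proposition \ref{p:difference} and Lemma \ref{lem:sumsym}, and infimal convolution via Proposition \ref{parallelsum} combined with Proposition \ref{qastartheorem}. Your added remarks -- spelling out the closedness argument for (i), verifying $A^{-1}\in\mathcal{A}$ in (ii), and noting that $0\in\ri\ran A_i$ disposes of the constraint qualifications -- are details the paper leaves implicit but are exactly the right justifications.
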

\begin{proof}
(i) This follows from Proposition \ref{thm:sequence}.\\

\noindent (ii) Let $f\in U,$ $f=q_A+c.$ By Proposition \ref{qastartheorem}, we have
$$f^*=(q_A+c)^*=q_A^*-c=q_{A^{-1}}-c,$$
which is a convex generalized linear-quadratic function of the required form. Therefore, $f^*\in U.$\\

\noindent(iii) It is clear that $f=q_A+c\in U$ and $\lambda>0$ yields $\lambda f=q_{\lambda A}+\lambda c\in U.$\\

\noindent(iv) Let $f_1,f_2\in U,$ $f_1=q_{A_1}+c_1,$ $f_2=q_{A_2}+c_2.$ By Proposition \ref{p:difference}, we have that $$(f_1+f_2)=q_{A_1+A_2}+c_1+c_2$$is a convex generalized linear-quadratic function of the form found in $U.$ Therefore, $f_1+f_2\in U.$\\

\noindent(v) Let $f_1,f_2\in U,$ $f_1=q_{A_1}+c_1,$ $f_2=q_{A_2}+c_2.$ By Propositions \ref{parallelsum} and \ref{qastartheorem}, we have $$f_1\oblong f_2=q^*_{A_1^{-1}+A_2^{-1}}+c_1+c_2=q_{(A_1^{-1}+A_2^{-1})^{-1}}+c_1+c_2\in U.$$
\end{proof}
\noindent For the more general setting of the sets $S$ and $T,$ the calculus rules are not so straightforward. More stringent conditions are necessary; the following theorem provides the obtainable results.
\begin{thm}\label{cor:calculusgen}
Let $d$ be the Attouch-Wets metric (see Definition \ref{attouchwetsmetric}). The following hold.
\begin{itemize}
\item[(i)]The metric space $(S,d)$ is complete.
\item[(ii)]If $f\in S,$ then $f^*\in S.$
\item[(iii)]If $f\in S$ with $b=0,$ then $f^*\in T.$
\item[(iv)]If $f\in S$ ($f\in T$) and $\lambda>0,$ then $\lambda f\in S$ ($\lambda f\in T$).
\item[(v)]If $f_1,f_2\in T,$ then $f_1+f_2\in T.$
\end{itemize}
\end{thm}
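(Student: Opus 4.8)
The plan is to handle~(i) by exhibiting $S$ as a closed subset of the complete space $(\Gamma_0(\R^n),d)$, and to handle~(ii)--(v) through the conjugation and arithmetic calculus for the linear relation $A$ underlying the $q_A$-part, carrying the shift $a$ and the linear coefficient $b$ along as bookkeeping. For~(i), recall that convergence in the Attouch--Wets metric $d$ is exactly epiconvergence on $\Gamma_0(\R^n)$: $d(f_k,f)\to 0$ forces $e_rf_k\to e_rf$ uniformly on bounded sets, hence pointwise, so $f_k\overset{e}\to f$ by Fact~\ref{fact:convergences} (convex case, $\bar r=0$). Since $(\Gamma_0(\R^n),d)$ is complete, it suffices to show $S$ is $d$-closed: given $f_k\in S$ with $f_k\to f$ in $d$, the limit $f$ lies in $\Gamma_0(\R^n)$, hence is proper, and $f_k\overset{e}\to f$, so Proposition~\ref{thm:sequence}(i) supplies $A\in\mathcal{A}$, $a,b\in\R^n$, $c\in\R$ with $f=q_A(\cdot-a)+\langle b,\cdot\rangle+c$; thus $f\in S$ and $(S,d)$ is complete.

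For~(ii), I would compute $f^*$ for $f=q_A(\cdot-a)+\langle b,\cdot\rangle+c$ using the elementary conjugacy rules $(g(\cdot-a))^*=g^*+\langle a,\cdot\rangle$, $(g+\langle b,\cdot\rangle)^*=g^*(\cdot-b)$, $(g+c)^*=g^*-c$, together with $q_A^*=q_{A^{-1}}$ from Proposition~\ref{qastartheorem}. This produces $f^*=q_{A^{-1}}(\cdot-b)+\langle a,\cdot\rangle+c'$ for a suitable $c'\in\R$. By Lemma~\ref{lem:invsym} the relation $A^{-1}$ is symmetric, it is a linear relation (its graph is $\gra A$ with the coordinates interchanged), and, being the graph-swap of the maximally monotone $A$, it is maximally monotone; hence $A^{-1}\in\mathcal{A}$, and $f^*\in\Gamma_0(\R^n)$ because $f\in\Gamma_0(\R^n)$. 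Therefore $f^*\in S$. Part~(iii) is the special case $b=0$: then $f^*=q_{A^{-1}}+\langle a,\cdot\rangle-c$, which carries no shift, so $f^*\in T$.

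For~(iv), observe that $\lambda q_A=q_{\lambda A}$ for $\lambda>0$ since $\dom(\lambda A)=\dom A$ and $\langle x,\lambda Ax\rangle=\lambda\langle x,Ax\rangle$, and $\lambda A\in\mathcal{A}$; hence $\lambda f=q_{\lambda A}(\cdot-a)+\langle\lambda b,\cdot\rangle+\lambda c\in S$, and it lies in $T$ whenever $a=0$. For~(v), write $f_i=q_{A_i}+\langle b_i,\cdot\rangle+c_i\in T$; Proposition~\ref{p:difference} gives $q_{A_1}+q_{A_2}=q_{A_1+A_2}$ and Lemma~\ref{lem:sumsym} gives $A_1+A_2\in\mathcal{A}$, so $f_1+f_2=q_{A_1+A_2}+\langle b_1+b_2,\cdot\rangle+(c_1+c_2)$; since $(0,0)\in\gra A_i$ one has $0\in\dom q_{A_i}$, so the sum is proper and hence in $\Gamma_0(\R^n)$. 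Therefore $f_1+f_2\in T$.

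The content is entirely imported---Proposition~\ref{thm:sequence} for~(i), Proposition~\ref{qastartheorem} for~(ii)--(iii), Proposition~\ref{p:difference} and Lemma~\ref{lem:sumsym} for~(iv)--(v)---so I anticipate no genuine obstacle. The only mild subtlety is that conjugation interchanges the two ``offset'' parameters (the shift $a$ and the linear term $b$), which is exactly why~(iii) lands in $T$ rather than back on the locus $b=0$, and why no analogue of~(v) for $S$ is asserted: a sum $q_{A_1}(\cdot-a_1)+q_{A_2}(\cdot-a_2)$ does not in general reduce to a single shifted $q_A$. The one place needing a careful word is properness in~(v), which I would deduce from $0\in\dom q_{A_1}\cap\dom q_{A_2}$ as above.
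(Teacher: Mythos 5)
Your proposal is correct and follows essentially the same route as the paper: part (i) via Proposition \ref{thm:sequence} together with completeness of $(\Gamma_0(\R^n),d)$, parts (ii)--(iii) via $q_A^*=q_{A^{-1}}$ and the standard conjugacy shift rules (your constant $c'=-c-\langle a,b\rangle$ matches the paper's $\langle a,\cdot-b\rangle-c$ after expansion), and parts (iv)--(v) via the scaling identity and Proposition \ref{p:difference} with Lemma \ref{lem:sumsym}. The extra details you supply (closedness of $S$ under $d$, maximal monotonicity and symmetry of $A^{-1}$, properness in (v)) are correct elaborations of steps the paper leaves implicit.
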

\begin{proof}
(i) This follows from Proposition \ref{thm:sequence}.\\

\noindent(ii) Let $f\in S,$ $f=q_A(\cdot-a)+\langle b,\cdot\rangle+c.$ Combining Proposition \ref{qastartheorem} and \cite[Proposition 13.20]{convmono}, we have
\begin{align*}
f^*&=(q_A(\cdot-a)+\langle b,\cdot\rangle+c)^*\\
&=(q_A(\cdot-a))^*(\cdot-b)-c\\
&=(q_{A^{-1}}+\langle a,\cdot\rangle)(\cdot-b)-c\\
&=q_{A^{-1}}(\cdot-b)+\langle a,\cdot-b\rangle-c,
\end{align*}
which is a convex generalized linear-quadratic function. Therefore, $f^*\in S.$\\

\noindent(iii) Let $f\in S,$ $f=q_A(\cdot-a)+c.$ By the same procedure as in the proof of (ii), we have
$$f^*=q_{A^{-1}}+\langle a,x\rangle-c\in T.$$

\noindent(iv) It is clear that $f=q_A(\cdot-a)+\langle b,\cdot\rangle+c\in S$ and $\lambda>0$ yields $\lambda f=q_{\lambda A}(\cdot-a)+\lambda\langle b,\cdot\rangle+\lambda c\in S,$ and that $a=0$ yields $\lambda f\in T.$\\

\noindent(v) Let $f_1,f_2\in T,$ $f_1=q_{A_1}+\langle b_1,\cdot\rangle+c_1,$ $f_2=q_{A_2}+\langle b_2,\cdot\rangle+c_2.$ By Proposition \ref{p:difference}, we have that $$(f_1+f_2)=q_{A_1+A_2}+\langle b_1+b_2,\cdot\rangle+c_1+c_2$$is a convex generalized linear-quadratic function. Therefore, setting $f=f_1+f_2,$ $A=A_1+A_2,$ $b=b_1+b_2$ and $c=c_1+c_2,$ we have that $f=q_A+\langle b,\cdot\rangle+c\in T.$
\end{proof}

\section{Conclusion}\label{sec:conc}

On $\R^n,$ the Moreau envelope of a generalized linear-quadratic function was explicitly identified. Conversely, it was determined under what conditions a quadratic function is a Moreau envelope of a generalized linear-quadratic. Characterizations of the existence of the Moreau envelope for convex functions involving Lipschitz continuity and nonexpansiveness were established, and we showed that a convex function is generalized linear-quadratic if and only if its Moreau envelope is convex linear-quadratic. The topic of generalized linear-quadratic functions was discussed at length; several useful characterizations and properties were established. We gave applications to generalized seminorms, the least squares problem, the epilimit of a sequence and the calculus of generalized linear-quadratic functions.

\section*{Acknowledgements}

Chayne Planiden was supported by UBC University Graduate Fellowship and by Natural Sciences and Engineering Research Council of Canada. Xianfu Wang was partially supported by a Natural Sciences and Engineering Research Council of Canada Discovery Grant.

\bibliographystyle{plain}
\bibliography{Bibliography}{}
\end{document}